 \newtheorem{Cor}[subsubsection]{Corollaire}
\newtheorem{Lm}[subsubsection]{Lemme}
\newtheorem{Pp}[subsubsection]{Proposition}
\newtheorem{Conj}[subsubsection]{Conjecture}
\newtheorem{Thm}[subsubsection]{Théorème}
\newtheorem{Def}[subsubsection]{Définition}
\newtheorem{Rq}[subsubsection]{Remarque}
 \newtheorem{Q}[subsubsection]{Question}
  \newtheorem{Hyp}[subsubsection]{Hypothèse}
 \newtheorem*{Conj*}{Conjecture}
 \theoremstyle{remark}
\newlist{lenumerate}{enumerate}{1}
\setlist[lenumerate,1]{label=\roman*), ref=\theLm(\roman*), leftmargin=0em, itemindent=1.5\parindent}
 \newcommand{\nc}{\newcommand}
 \nc{\rnc}{\renewcommand}
 \nc{\on}{\operatorname}
 \rnc{\sec}{\section}
 \nc{\ssec}{\subsection}
 \nc{\sssec}{\subsubsection}
 \rnc{\theenumi}{\roman{enumi})}
\rnc{\labelenumi}{\theenumi}
 \nc{\cA}{{\mathcal A}}
 \nc{\cB}{{\mathcal B}}
 \nc{\cC}{{\mathcal C}}
 \nc{\cD}{{\mathcal D}}
 \nc{\cE}{{\mathcal E}}
 \nc{\cF}{{\mathcal F}}
 \nc{\cG}{{\mathcal G}}
 \nc{\cH}{{\mathcal H}}
 \nc{\cI}{{\mathcal I}}
 \nc{\cJ}{{\mathcal J}}
 \nc{\cK}{{\mathcal K}}
 \nc{\cL}{{\mathcal L}}
 \nc{\cM}{{\mathcal M}}
 \nc{\cN}{{\mathcal N}}
 \nc{\cO}{{\mathcal O}}
 \nc{\cP}{{\mathcal P}}
 \nc{\cQ}{{\mathcal Q}}
 \nc{\cR}{{\mathcal R}}
 \nc{\cS}{{\mathcal S}}
 \nc{\cT}{{\mathcal T}}
 \nc{\cU}{{\mathcal U}}
 \nc{\cV}{{\mathcal V}}
 \nc{\cW}{{\mathcal W}}
 \nc{\cX}{{\mathcal X}}
 \nc{\cY}{{\mathcal Y}}
 \nc{\cZ}{{\mathcal Z}}
 \rnc{\AA}{{\mathbb A}}
 \nc{\BB}{{\mathbb B}}
 \nc{\CC}{{\mathbb C}}
 \nc{\DD}{\mathbb{D}}
 \nc{\EE}{{\mathbb E}} 
 \nc{\FF}{{\mathbb F}}
 \nc{\GG}{{\mathbb G}}
 \nc{\HH}{{\mathbb H}}
 \nc{\KK}{{\mathbb K}}
 \nc{\LL}{{\mathbb L}}
 \nc{\NN}{{\mathbb N}}
 \nc{\OO}{{\mathbb O}}
 \nc{\PP}{{\mathbb P}}
 \nc{\QQ}{{\mathbb Q}}
 \nc{\RR}{{\mathbb R}}
 \nc{\SSS}{{\mathbb S}}
 \nc{\TT}{{\mathbb T}}
 \nc{\VV}{{\mathbb V}}
 \nc{\WW}{{\mathbb W}}
 \nc{\YY}{{\mathbb Y}}
 \nc{\ZZ}{{\mathbb Z}}
 \nc{\frg}{\mathfrak{g}} 
 \nc{\frm}{\mathfrak{m}} 
 \nc{\frG}{\mathfrak{G}} 
 \nc{\frF}{\mathfrak{F}} 
 \nc{\frS}{\mathfrak{S}} 
 \nc{\rmd}{\mathrm{d}} 
 \nc{\rmK}{\mathrm{K}} 
 \nc{\rmR}{\mathrm{R}} 
 \nc{\rmG}{\mathrm{G}} 
 \nc{\rmF}{\mathrm{F}}
 \rnc{\a}{{\alpha}}
 \rnc{\b}{{\beta}}
 \nc{\g}{{\gamma}}
\nc{\Div}{\on{Div}}
 \nc{\Pic}{\on{Pic}}
\nc{\Conv}{\on{Conv}} 
\nc{\Vol}{\mathrm{Vol}}
\nc{\Gal}{\on{Gal}}
 \nc{\HHH}{\on{H}} 
 \nc{\Heis}{\on{H}} 
  \rnc{\P}{\on{\mathbb{P}}} 
  \nc{\Tr}{\on{Tr}} 
   \nc{\pr}{\on{pr}} 
  \nc{\ocirc}[1]{\overset{\circ}{#1}}
  \nc{\Ouv}[1]{\ocirc{#1}}
  \nc{\dual}[1]{{#1}^{\vee}}
  \rnc{\Im}{\on{Im}} 
 \nc{\Bun}{\on{Bun}} 
 \nc{\Hom}{\on{Hom}} 
 \nc{\Ker}{\on{Ker}}
 \nc{\Aut}{{\on{Aut}}} 
 \nc{\colim}{\on{colim}} 
 \nc{\supp}{\on{supp}} 
 \nc{\IC}{\on{IC}} 
 \nc{\ST}{{\on{ST}}} 
 \nc{\Sym}{{\on{Sym}}}  
 \nc{\Nm}{\on{Nm}}  
 \nc{\ev}{\on{ev}} 
 \nc{\e}{\mathrm{e}} 
 \nc{\id}{\mathrm{id}} 
  \nc{\ass}[2]{{{#1}_{\cF_{#2}}}} 
  \nc{\ts}[3]{{{#1}\ot_{#2}{#3}}}
 \nc{\T}{{\on{T}}}
 \nc{\car}{{\on{car}}}
 \nc{\Out}{\on{Out}}
 \nc{\Spin}{\on{Spin}}
 \nc{\Gm}{\mathbb{G}_m}
 \nc{\Mor}{\on{Mor}}
 \nc{\Spec}{\on{Spec}}
 \nc{\dec}[1]{[{#1}](\frac{#1}{2})} 
 \nc{\ndec}[1]{[-{#1}](-\frac{#1}{2})} 
 \nc{\conv}{\ast}
 \nc{\Qb}{\bar{\QQ}}
 \nc{\Ql}{\QQ_{\ell}}
 \nc{\Zl}{\ZZ_{\ell}}
 \nc{\Qlb}{\overline{\QQ_{\ell}}}
 \nc{\inv}{^{-1}}
 \nc{\Four}{\on{Four}}
 \nc{\Res}{\on{Res}}
 \nc{\card}[1]{\on{card}(#1)} 
  \nc{\Lie}{\on{Lie}}
  \nc{\Ga}{\GG_a} 
  \nc{\Gr}{\on{Gr}} 
   \nc{\SO}{{\on{S\OO}}}
   \nc{\GL}{{\on{GL}}}
   \nc{\PGL}{\on{PGL}}
   \nc{\SL}{\on{SL}}
   \nc{\Sp}{\on{Sp}}
   \rnc{\O}{\on{O}}
   \nc{\Fq}{\FF_q}
\nc{\rg}{\on{rg}}
\nc{\ad}{\mathrm{ad}}
\nc{\Ad}{\mathrm{Ad}}
\nc{\Swan}{\mathrm{Swan}}
\nc{\Frac}{\on{Frac}}
\nc{\Unit}[1]{{#1}^{\times}}	
 \nc{\f}{{\forall}}
\nc{\os}{\overset}
\nc{\mto}{\mapsto}  
 \nc{\ito}{{\xrightarrow{\sim}}}
 \nc{\xto}{\xrightarrow}
\nc{\hto}{{\hookrightarrow}}
\nc{\wt}{\widetilde}
\nc{\ul}{\underline}
\nc{\Om}{{\Omega}}
\nc{\om}{{\omega}}
\nc{\bra}{\langle}
\nc{\ket}{\rangle}  
\rnc{\t}{\times}
\nc{\lt}{\ltimes}
\nc{\ot}{\otimes}
\nc{\op}{\oplus}
\nc{\coupl}[2]{\langle #1, #2\rangle} 
\nc{\Equiv}{\Leftrightarrow}
\nc{\da}{\downarrow}
 \nc{\epc}{{\check{\epsilon}}}
 \rnc{\phi}{\varphi}
 \nc{\eseq}[3]{0\to{#1}\to{#2}\to{#3}\to 0}
\nc{\Pa}{P_{\a}}
\nc{\Pb}{P_{\b}}
\nc{\Za}{Z_{\a}}
\nc{\Zb}{Z_{\b}}
\nc{\Ma}{M_{\a}}
\nc{\Mb}{M_{\b}}
\nc{\Ub}{U_{\b}}
\nc{\UPa}{\on{R}_u(\Pa)}
\nc{\UPb}{\on{R}_u(\Pb)}
\nc{\Ug}{U_{\g}}
\nc{\YPa}{\cY_{\Pa}}
\nc{\Fsr}{\cF_{sr}}
\nc{\Fred}{\textcolor{red}{\Four}}
\nc{\Fblue}{\textcolor{blue}{\Four}}
\nc{\Fgreen}{\textcolor{green}{\Four}}
\nc{\ocX}{\ocirc{\cX}}
\nc{\ocY}{\ocirc{\cY}}
\nc{\ja}{j_{\a}}
\nc{\jb}{j_{\b}}
\nc{\pia}{\pi_{\a}}
\nc{\pib}{\pi_{\b}}
\nc{\Franel}{\rm{Franel}}
\nc{\Hk}{\rm{Hk}}
\nc{\Sat}{\rm{Sat}}
\nc{\Rep}{\rm{Rep}}
\nc{\EF}{E_{\cF}}
\nc{\VGm}{V|_{\Gm}}
\nc{\Fmin}{\cF_{\text{min}}}
\nc{\Ftriv}{\cF_{triv}}
\nc{\Fst}{\cF_{\text{st}}}
\nc{\Fsgn}{\cF_{sgn}}
\nc{\Frho}{\cF_{\rho}}
\nc{\rhotriv}{\rho_{\text{triv}}}
\nc{\rhost}{\rho_{\text{st}}}
\nc{\rhosgn}{\rho_{\text{sgn}}}
\nc{\Eis}{\rm{Eis}}
\nc{\bBun}{\overline{\Bun}}
\nc{\bU}{{\bar{U}}}
\nc{\bL}{{\bar L}}
\nc{\uL}{{\ul{L}}}
\nc{\tu}{{\tilde{u}}}
\nc{\tR}{\tilde{R}}
\nc{\tM}{\tilde{M}}
\nc{\ttM}{\tilde{\tilde{M}}}
\nc{\ttR}{\tilde{\tilde{R}}}
\nc{\UtR}{U_{\tilde{R}}}
\nc{\bu}{{\bar u}}
\nc{\asp}[3]{{{#2}{\overset{{#1}}{\times}}{#3}}}
\nc{\asss}[3]{{{({#1}_{\cF_{#3}})}_{\cF_{({#2}_{\cF_{#3}})}}}} 
\nc{\assT}[2][T]{{\cF_{(\ass{#2}{#1})}}}
\nc{\cZo}{\ocirc{\cZ}}
\nc{\cYo}{\ocY}
\nc{\WF}{{\cW_{\cF_{P_\alpha}}}}
\nc{\VF}{{\cV_{\cF_{P_\alpha}}}}
\nc{\FPa}{{\cF_{P_\alpha}}}
\nc{\piRR}{{\pi_{\cR'}}}
\nc{\piR}{{\pi_\cR}}
\nc{\ppiRR}{{\pi'_{\cR'}}}
\nc{\ppiR}{{\pi'_\cR}}
\nc{\vastF}{{v^*_{\FPa}}}
\nc{\evR}{{\ev_\cR}}
\nc{\cZP}{{\cZ_P}}
\nc{\dV}{{\det\cV}}
\nc{\RPa}{{\cR_{P_\alpha}}}
\nc{\Piss}{{\Pi^{ss}}}
\nc{\Pis}{{\Pi^s}}
\nc{\phia}{{\phi_\alpha}}
\nc{\phiu}{{\phi_u}}
\nc{\Ubch}{{U_{\check\beta}}}
\nc{\ach}{{\check{\alpha}}}
\nc{\bch}{{\check{\beta}}}
\nc{\gch}{{\check{\gamma}}}
\nc{\FT}{{\cF_T}} 
\nc{\Ua}{{U_\alpha}}
\nc{\UUU}[2][]{{U_{#1\ach+#2\bch}}}
\nc{\Ue}[3][+]{{U_{\epc_{#2}{#1}\epc_{#3}}}}
\nc{\UeT}[2]{{\ass{\Ue{#1}{#2}}{T}}}
\nc{\U}{{\Ubch}}
\nc{\Uu}{{\UUU{2}}} 
\nc{\Ud}{\UUU{}} \nc{\UdY}{{U^Y_{\ach+\bch}}} 
\nc{\Ut}{{U_\ach}}
\nc{\Uq}{{\UUU{3}}} 
\nc{\Uc}{{\UUU[2]{3}}}
\nc{\UF}{{{\U}_{\FT}}}
\nc{\UFu}{{{\Uu}_{\FT}}}
\nc{\UFd}{{{\Ud}_{\FT}}}  \nc{\UFdY}{{{U^Y_{\ach+\bch}}_\FT}}
\nc{\UFt}{{{\Ut}_{\FT}}}
\nc{\UFq}{{{\Uq}_{\FT}}}
\nc{\UFc}{{{\Uc_{\FT}}}}
\nc{\FU}{{\cF_{(\UF)}}}
\nc{\FUu}{{\cF_{(\UFu)}}}
\nc{\FUd}{{\cF_{(\UFd)}}} \nc{\FUdY}{{\cF_{(\UFdY)}}}
\nc{\FUt}{{\cF_{(\UFt)}}}
\nc{\FUq}{{\cF_{(\UFq)}}}
\nc{\ab}[2][]{{{#1}\ach+{#2}\bch}}
\rnc{\ggg}[2][]{{\frg_{#1\ach+#2\bch}}}
\nc{\gb}{{\frg_\bch}} \nc{\gF}{{\gb_\FT}} 
\nc{\gu}{{\ggg{2}}}   \nc{\guF}{{\gu_\FT}}
\nc{\gd}{{\ggg{}}}     \nc{\gdF}{{\gd_\FT}}
\nc{\gt}{{\frg_\ach}}   \nc{\gtF}{{\gt_\FT}}
\nc{\gq}{{\ggg{3}}}     \nc{\gqF}{{\gq_\FT}}
\nc{\gc}{{\ggg[2]{3}}}  \nc{\gcF}{{\gc_\FT}}
\nc{\sog}[3][+]{{\frg_{\epc_{#2}{#1}\epc_{#3}}}} 
\nc{\sogT}[3][+]{{{\sog[#1]{#2}{#3}}_\FT}}
\nc{\tU}{{\wt{U}}} 
\nc{\wedgech}{\check{\wedge}}
\nc{\Phich}{\check{\Phi}}
\nc{\RG}{R_G}
\nc{\tensR}[1]{#1_{\RR}}
\nc{\Hyper}{\rm{Hyp}^0_{\psi}}
\nc{\Sh}[1]{\rm{Sh}(#1)}
\nc{\LR}{\tensR{\Lambda}}
\nc{\cone}[1]{\cC_{#1}}
\nc{\aG}{a_{\Gamma}}
\nc{\aF}{a_{\Phi}}
\nc{\Xa}{\cX_{\alpha}}
\nc{\HX}[1]{\HHH^0(X,#1)} 
\nc{\diff}[1]{(\rm{d} #1)_{\ast}} 
\nc{\fr}[1]{\mathfrak{#1}}
\nc{\frI}{\fr{I}}
\nc{\fri}{\fr{i}}
\nc{\cXi}{\cX_{\pi,\nu,\mu}}
\nc{\phii}{\phi_{\pi,\nu,\mu}}
\nc{\RGc}{\on{R\Gamma_c}}
\nc{\PhiD}[2]{\Phi_{\Delta}(#1; #2)}
\nc{\PD}{P_{\Delta}}
\nc{\Lpsi}{\cL_{\psi}}
\nc{\Dinf}{\Delta_{\infty}}
\nc{\LtR}{\Lambda\otimes\RR}
\nc{\dLtR}{\dual{\Lambda}\otimes\RR}
\nc{\Autg}{\Aut_{\mathrm{grp}}}
\nc{\Autv}{\Aut_{\mathrm{var}}}
\nc{\Hzar}{\HHH_{\mathrm{Zar}}}
\nc{\Hdr}{\HHH_{\mathrm{dR}}}
\nc{\Het}{\HHH_{\mathrm{ét}}}
\nc{\Hcl}{\HHH_{\mathrm{cl}}}
\nc{\Hcet}{\HHH_{c,\mathrm{ét}}}
\nc{\rmT}{\mathrm{T}}
\nc{\Ts}{\rmT_{\sigma}}
\nc{\Tt}{\rmT_{\tau}}
\nc{\cXo}{\cX^{\circ}}
\nc{\rmW}{\mathrm{W}} 	
\title[Faisceau Automorphe Unipotent pour $\rmG_2$]{Faisceau Automorphe Unipotent pour $\rmG_2$, Nombres de Franel, et Stratification de Thom-Boardman}
\author{Lizao YE}
\date{3 février 2020}
\begin{document}

\maketitle

\sec*{Résumé}

Nous généralisons au cas équivariant un résultat de J. Denef et F. Loeser sur les sommes trigonométriques sur un tore; d'autre part, nous étudions la stratification de Thom-Boardman associée à la multiplication des sections globales des fibrés en droites sur une courbe. Nous montrons une inégalité subtile sur les dimensions de ces strates.

Notre motivation vient du programme de Langlands géométrique. En s'appuyant sur les travaux de W. T. Gan, N. Gurevich, D. Jiang et de S. Lysenko, nous proposons, pour le groupe réductif $G$ de type~$\rmG_2$, une construction conjecturale du faisceau automorphe dont le paramètre d'Arthur est unipotent et sous-régulier. En utilisant nos deux résultats ci-dessus, nous déterminons les rangs génériques de toutes les composantes isotypiques d'un faisceau $S_3$-équivariant qui apparaît dans notre conjecture, ce $S_3$ étant le centralisateur du $\SL_2$ sous-régulier dans le groupe dual de Langlands de $G$. 

\tableofcontents

\sec{Introduction}

	\ssec{Programme de Langlands géométrique}
	Une difficulté dans le programme de Langlands géométrique est de construire les faisceaux automorphes associés aux paramètres d'Arthur \emph{unipotents}.
	
	Soit $k$ un corps algébriquement clos de caractéristique~$p>0$. Soit $\ell$ un nombre premier différent de $p$. Nous employons la cohomologie étale $\ell$-adique. Soit $X$ une courbe projective lisse et irréductible sur~$k$, et $G$ un groupe réductif sur~$k$. Notons par $\Bun_G$ le champs classifiant des $G$-fibrés principaux sur $X$. Pour tout point $x\in X$, on a la grassmanienne affine $\Gr_x$ classifiant les $G$-fibrés principaux sur $X$ munis d'une trivialisation en dehors de $x$. On a 
	\[\Gr_x=G(K_x)/G(O_x).\]
	Ainsi il admet l'action du groupe $G(O_x)$ par multiplication à gauche.  La catégorie de Hecke $\Hk_x$ est la catégorie des faisceaux pervers $G(O_x)$-équivariants sur $\Gr_x$. Le produit tensoriel sur $\Hk_x$ est la convolution. On peut la munir naturellement d'une contrainte de commutativité. La catégorie monoïdale
			\[\Hk_X\colonequals\otimes '_{x\in X}\Hk_x\]
	agit, par modifications de $G$-fibrés, sur la catégorie 
	\[\Sh{\Bun_G}\]
	des faisceaux (dérivés) sur $\Bun_G$. 
	
	Le programme de Langlands géométrique vise à étudier la \emph{décomposition spectrale} de $\Sh{\Bun_G}$ sous cette action. On dit que $\cF\in\Sh{\Bun_G}$ est un \emph{faisceau propre pour Hecke} s'il est propre sous cette action, c'est-à-dire que pour tout $h\in\Hk_X$,
	\[h*\cF=\EF(h)\otimes\cF.\]
Ici $\EF(h)$ est un faiseau (dérivé) sur un point.

Quel système de valeurs propres $(\EF(h))_ {h\in\Hk_X}$ peut-on prendre? En fait, rappelons l'isomorphisme de Satake
\[\Sat_x:\Rep_{\check{G}}\ito\Hk_x,\]
où $\check{G}$ sur $\Qlb$ est le groupe réductif dual de $G$. 
Langlands et Arthur proposent que les $\EF(h)$ s'organisent en un $\check{G}$-système local \emph{gradué} $\sigma$ sur $X$, de sorte que pour toute représentation $V$ de $\check{G}$ et tout $x\in X$, notant $h=\Sat_x(V)$, on a un isomorphisme entre faisceaux dérivés sur un point:
\[V_{\sigma,x}=\EF(h).\]
           Ainsi,
                      
\begin{equation}\label{Hecke} 
          h*\cF=V_{\sigma,x}\otimes\cF.       
\end{equation}              

Un tel système local gradué $\sigma$ s'exprime aussi comme un homomorphisme
\[\sigma:\pi_1(X)\times\SL_2\to\check{G},\]
 appelé paramètre d'Arthur, où $\pi_1(X)$ désigne le groupe fondamental étale de $X$. En fait, pour toute représentation $V$ de $\check{G}$, $V_{\sigma}$ sera un système local sur $X$ muni d'une action de $\SL_2$. L'action du tore diagonal $\Gm\subset\SL_2$ fait de $V_{\sigma}$ un système local \emph{gradué}. Ainsi $V_{\sigma}$ peut être interprété comme \emph{dérivé}, c'est le sens adopté dans~\eqref{Hecke}.
 
 La direction \og galois$\implies$ automorphe\fg{} du programme de Langlands consiste, pour un paramètre d'Arthur $\sigma$ donné, à trouver tous les faisceaux $\cF$ propres pour Hecke pour le système local gradué associé à~$\sigma$.

\ssec{Conjecture d'Arthur}
Nous nous sommes concentrés sur les paramètres d'Arthur dits \emph{unipotents}, ce sont ceux qui se factorisent par $\SL_2\to\check{G}$. En termes de $\check{G}$-systèmes locaux sur $X$, ce sont des systèmes triviaux mais avec une graduation éventuellement non-triviale. Si un faisceau $\cF$ sur $\Bun_G$ correspond à un tel paramètre, l'action de Hecke sur $\cF$ devient très simple: pour tout $x\in X$ et toute représensation $V$ de $\check{G}$, notons $h\colonequals\Sat_x(V)$, alors 
\begin{equation}\label{Heckeunip}
            h*\cF=\VGm\otimes\cF.
\end{equation}            
 Ici, $\VGm$ est l'espace vectoriel $V$ muni de la graduation donnée par l'action de 
\[\Gm\subset\SL_2\to\check{G}.\]
Dans l'equation~\eqref{Heckeunip}, on considère $\VGm$ comme un faisceau \emph{dérivé} sur un point.  

Dans la suite, un paramètre d'Arthur sera sous-entendu unipotent, et sera noté simplement
\[\sigma:\SL_2\to\check{G}.\]
On dit que $\sigma$ est \emph{distingué} si son image n'est contenue dans aucun Levi propre de $\check{G}$.

Arthur, basé sur des considérations autour de la formule de trace, a conjecturé dans~\cite[Conjecture~8.1]{Arthur} l'existence et l'unicité de représentations automorphes unipotentes. L'analogue géométrique s'énonce comme suivant

\begin{Conj} \label{Conj:Arthur}
  Soit $\sigma$ un paramètre d'Arthur unipotent distingué, alors il existe un (essentiellement) unique faisceau (dérivé) $\cF$ sur $\Bun_G$ qui vérifie~\eqref{Heckeunip}.
  \end{Conj}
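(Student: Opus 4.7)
Le plan est de traiter séparément les questions d'existence et d'unicité, en adoptant la stratégie esquissée dans les travaux de Gan-Gurevich-Jiang et prolongée géométriquement par Lysenko, mentionnée dans le résumé.

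Pour l'existence, je procéderais par construction indirecte. On choisit un sous-groupe parabolique $P=MU\subset G$ adapté à $\sigma$ (typiquement le Levi $M$ dont le dual contient le plus petit Levi de $\check{G}$ rencontrant $\sigma(\SL_2)$), puis un faisceau de nature \say{minimale} ou \say{thêta} $\cF_M$ sur $\Bun_M$. On applique ensuite le foncteur de série d'Eisenstein géométrique $\Eis_P^G:\Sh{\Bun_M}\to\Sh{\Bun_G}$ pour produire un candidat sur $\Bun_G$. En général, ce candidat contient plusieurs composantes spectrales; il faut donc appliquer une transformée de Fourier appropriée, puis extraire la composante isotypique voulue sous l'action du centralisateur fini $S_\sigma\colonequals Z_{\check{G}}(\sigma(\SL_2))/Z(\check{G})$, qui agit naturellement sur les faisceaux candidats. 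Pour $G$ de type $\rmG_2$ et $\sigma$ le paramètre sous-régulier, on a $S_\sigma=S_3$, et c'est précisément la décomposition selon l'action de ce groupe qui motive les résultats techniques de l'article.

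Pour l'unicité, j'utiliserais la relation~\eqref{Heckeunip} comme contrainte rigide: l'hypothèse \emph{distingué} assure que les gradués $\VGm$ ne proviennent d'aucun Levi propre, donc la famille des opérateurs de Hecke $h=\Sat_x(V)$, quand $x\in X$ et $V\in\Rep_{\check{G}}$ varient, est suffisamment riche pour déterminer $\cF$. Concrètement, je chercherais un ouvert dense bien choisi de $\Bun_G$ sur lequel les modifications de Hecke agissent transitivement, de sorte que $\cF$ soit entièrement déterminé par sa fibre générique; l'action résiduelle de $S_\sigma$ expliquerait la précision \say{essentiellement} dans l'énoncé.

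Le principal obstacle est sans aucun doute l'existence pour un paramètre distingué quelconque: aucune construction uniforme n'est connue, et chaque cas non-régulier requiert des idées ad hoc, inspirées par exemple de la représentation minimale ou de la correspondance thêta. Pour le cas $\rmG_2$ traité ici, la vérification effective de~\eqref{Heckeunip} pour le candidat $S_3$-équivariant passe par le calcul explicite des rangs génériques de ses composantes isotypiques, calcul qui mobilise à la fois la généralisation équivariante du théorème de Denef-Loeser sur les sommes trigonométriques et l'inégalité délicate sur la stratification de Thom-Boardman développées dans l'article.
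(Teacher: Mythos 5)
La statement que vous tentez de prouver est la conjecture~\ref{Conj:Arthur}, qui n'est pas démontrée dans l'article: c'est précisément l'analogue géométrique de la conjecture d'Arthur, et l'article la laisse explicitement ouverte (une note de bas de page précise même que la partie \emph{unicité} n'est pas discutée du tout, ni ici ni ailleurs dans la littérature). Il n'y a donc pas de démonstration dans le papier avec laquelle comparer votre tentative.

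Cela dit, votre texte n'est pas non plus une démonstration, mais une esquisse stratégique, ce que vous reconnaissez vous-même en identifiant l'existence comme obstacle principal et en admettant l'absence de construction uniforme. Cette esquisse est cohérente avec la démarche que l'article adopte \emph{pour le cas particulier de} $\rmG_2$: la conjecture~\ref{Franel} joue le rôle du candidat explicite, et les résultats techniques (Denef-Loeser équivariant, Thom-Boardman) servent à étudier ce candidat. Mais plusieurs éléments de votre esquisse seraient à préciser ou corriger pour devenir un argument: le foncteur $\Eis_P^G$ ne produit pas directement un faisceau propre pour Hecke pour un paramètre non-régulier, et l'idée de \say{prendre des résidus géométriques} des séries d'Eisenstein, qui serait l'analogue naturel de Langlands, est justement signalée dans l'article comme non disponible actuellement. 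De plus, l'article souligne que $\rmG_2$ n'admet pas de parabolique maximal à radical unipotent abélien, ce qui invalide les constructions de type thêta/minimale utilisées pour $D_n$, $E_6$, $E_7$ dans \cite{Lysenko}; la construction proposée dans~\ref{Franel} est spécifiquement adaptée à cette difficulté (usage de $Z^{\b}$ et de la transformation de Fourier \og en deux temps\fg{}). Pour l'unicité, l'argument que vous esquissez (richesse de la famille des opérateurs de Hecke, action transitive des modifications sur un ouvert dense) n'a pas d'élaboration connue, et votre interprétation du mot \say{essentiellement} est inexacte: dans l'article, il signifie \say{à sommes directes de copies décalées près}, pas \say{à action résiduelle de $S_\sigma$ près}.

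En résumé, il n'y a pas de gap à pointer dans le sens usuel puisque l'énoncé est conjectural; mais votre texte ne doit pas être confondu avec une preuve, et certains de ses ingrédients (résidus géométriques, correspondance thêta pour $\rmG_2$) ne sont pas disponibles.
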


  L'unicité dans cet énoncé signifie que les autres faisceaux vérifiant les mêmes conditions sont des sommes directes de plusieurs copies (décalées) de $\cF$.
  
  Les classes de conjugaison d'homomorphisme $\sigma:\SL_2\to\check{G}$ sont en bijection avec les orbites unipotentes dans $\check{G}$. Cette bijection est donnée en prenant l'image par $\sigma$ d'un quelconque élément unipotent qui n'est pas l'identité de $\SL_2$. Les orbites unipotentes sont (partiellement) ordonnées:
  \[O\leq O' \text{ si }O\subset \bar{O'}.\]
  Il existe une unique orbite maximale, dite \emph{régulière}, et, si $\check{G}$ est simple, une unique orbite, dite \emph{sous-régulière}, qui est maximale parmi toutes les orbites non-régulières. Les paramètres d'Arthur correspondants se nomment de la même manière. 
    
  Pour un $\sigma$ régulier, le faisceau sur $\Bun_G$ qui lui correspond est le faisceau \emph{constant}. Si $G$ est de type~$\rm A$, le seul paramètre d'Arthur distingué est le paramètre régulier, donc la conjecture ne dit pas grand chose dans ce cas. Lorsque $G$ est de type~$\rm B, D, E, F, \text{ou }G$, mais non pas de type $\rm C$, l'orbite sous-régulière de $\check{G}$ est distinguée, voir~\cite{Collingwood}. Une question se pose donc naturellement dans ces cas:

  \begin{Q}\label{Question-sr}
     Comment décrire \emph{le} faisceau automorphe $\Fsr$ associé au paramètre d'Arthur unipotent sous-régulier?
     \end{Q}

Lorsque $G$ est de type $D$, cette question a été répondue dans~\cite{Lysenko} via la \og correspondence theta\fg{};\footnote{ La partie \og unicité\fg{} de la conjecture~\ref{Conj:Arthur} n'a néanmoins pas été considérée dans la littérature, et ne sera pas discutée dans cet article non plus.} lorsque $G$ est de type $E_6$ ou $E_7$, une construction conjecturale est proposée dans la conjecture 9.1 de \emph{loc. cit.} Remarquons que toutes ces constructions sont basées sur le fait que dans ces cas, le groupe $G$ admet au moins un sous-groupe parabolique maximal dont le radical unipotent est abélien. Ce n'est plus vrai pour $G$ de type $\rmG_2$.

 		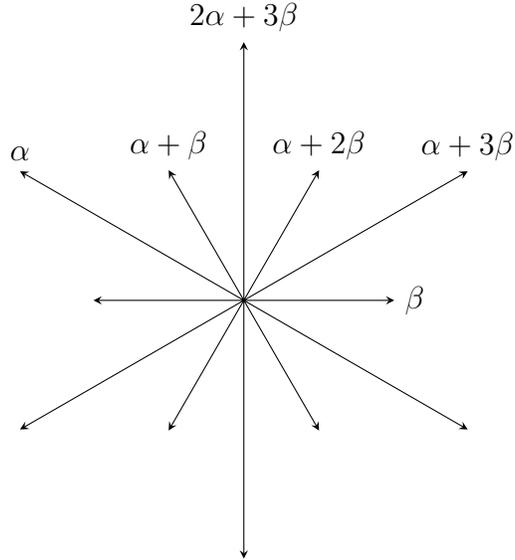
\begin{figure}[h]
		\begin{tikzpicture}[scale=2]
		\coordinate (0) at (0,0);
		\coordinate (1) at (0:1);
		\coordinate (2) at (30:1.71828);
		\coordinate (3) at (60:1);
		\coordinate (4) at (90:1.71828);
		\coordinate (5) at (120:1);
		\coordinate (6) at (150:1.71828);
		\coordinate (7) at (180:1);
		\coordinate (8) at (210:1.71828);
		\coordinate (9) at (240:1);
		\coordinate (10) at (270:1.71828);
		\coordinate (11) at (300:1);
		\coordinate (12) at (330:1.71828);		
		\foreach \x in {1,2,...,6} {\draw[<->,>=stealth] (60*\x-60:1)--(0)--(60*\x-30:1.71828);}				
		\draw (1) node[right]{$\b$};
		\draw (2) node[above]{$\a+3\b$};
		\draw (3) node[above]{$\a+2\b$};
		\draw (4) node[above]{$2\a+3\b$};		
		\draw (5) node[above]{$\a+\b$};
		\draw (6) node[above]{$\a$};
		\end{tikzpicture}
		\caption{Système des racines de $\rmG_2$}\label{Fig:systemeracines}
	\end{figure}

\ssec{Le cas du type $\rmG_2$ et notre conjecture}
En fait, le groupe $G$ de type $\rmG_2$ est le groupe de dimension la plus petite pour lequel la question~\ref{Question-sr} se pose. Dans ce cas, le faisceau $\Fsr$ est l'analogue géométrique de l'exemple le plus essentiel considéré par Langlands lui-même dans son livre~\cite{Langlands}. Il appraît dans l'Appendix III de \emph{loc. cit.}, comme illustration de sa méthode de prendre les résidus des \emph{séries d'Eisenstein} pour remplir le \emph{spectre discret}. Cet exemple a ensuite joué un rôle important quand Arthur formulait ses conjectures citées plus haut.\footnote{Il écrivait dans~\cite[p. 57]{Arthur}: \say{ It seems that the only other elliptic unipotent representation which is known to exist is the Langlands' representation for $\rmG_2$.}} Or, même si les séries d'Eisenstein géométriques existent, voir~\cite{Braverman}, jusqu'à présent on ne sait pas comment en prendre les \og\emph{résidus géométriques}\fg{}. 

Je propose une construction conjecturale de $\Fsr$ pour le groupe $G$ de type~$\rmG_2$. Pour l'énoncer, nous avons besoin de plus de notations.
 
	 Fixons un épinglage de $G$, c'est-à-dire un Borel $B$ de $G$ et un tore maximal $T$ dans $B$, d'où le système de racines. On note $\Phi$ l'ensemble des racines. Dans cet article, nous désignons par \og racines simples\fg{} celles qui sont habituellement désignées comme \og racines simples positives\fg{}. On a $\frg\colonequals\Lie G=\Lie(T)\oplus\oplus_{\gamma\in\Phi}\frg_{\g}$. On désigne par $\Ug$ le sous-groupe unipotent de $G$ d'algèbre de Lie $\frg_{\g}$. On note $\theta$ la plus haute racine. Les racines sont des $\ZZ$-combinaisons des racines simples. Si on fixe une racine simple et ne regarde que ses coefficients, on obtient une $\ZZ$-graduation sur $\frg$. Pour tout groupe réductif épinglé, on sait que ses paraboliques standards maximaux sont en bijection avec ses racines simples.  On note $P^{\g}$ le parabolique maximal correspondant à une racine simple $\g$. On note $M^{\g}$ le Levi standard de $P^{\g}$, et $U^{\g}$ le radical unipotent de $P^{\g}$, et $Z^{\g}$ le centre de $U^{\g}$.
	 
	 Dans le cas du type~$\rmG_2$, on note $\b$ la racine simple courte, qui induit une $\ZZ$-graduation \[\frg\colonequals\Lie G=\oplus_{n=-3}^3\frg_n.\]  Pour tout $n\geq 0$, on note $G_{\geq n}$ (resp. $G_0$) le sous-groupe de $G$ dont l'algèbre de Lie est $\oplus_{j\geq n}\frg_j$ (resp. $\frg_0$). Alors pour tout $m\geq n$, $G_{\geq m}$ est un sous-groupe normal de $G_{\geq n}$. On a \[ G_{\geq 0}=P^{\b}, G_0=M^{\b}, G_{\geq 1}=U^{\b}, G_{\geq 3}=Z^{\b}.\] On note $\a$ la racine simple longue. Alors \begin{gather*}\theta=2\a+3\b, \frg_3=\frg_{\theta}\oplus\frg_{\a+3\b},\\
	 \Lie(B/U_{\theta})=\Lie(TU_{\b})\oplus\frg_{\a+?\b},\end{gather*}
	 où $\frg_{\a+?\b}\colonequals\oplus_n\frg_{\a+n\b}$.	 	

	L'énoncé de notre construction se base sur le lemme suivant. 
	 \begin{Lm}
	     Supposons $\mathrm{car}(k)\neq 2\text{ ou }3$. Alors, il existe un unique morphisme entre des schémas \begin{equation}\label{Eq:Sthree}\frg_{\b}^3\times\frg_{\a+?\b}\to\frg_{\a+3\b}\end{equation} vérifiant les conditions suivantes: \begin{itemize} \item Il est $S_3$-invariant;\item Il est $TU_{\b}$-équivariant, où pour $t\in T$ et $x\in U_{\b}\ito\frg_{\b}$, l'action de $tx\in TU_{\b}$ sur $\frg_{\b}$ envoie $y\in\frg_{\b}$ sur $\Ad(t)(x+y)$; \item Sa restriction à $\{0\}^3\times\frg_{\a+3\b}$ est la projection;\item Sa restriction à $\frg_{\b}^3\times\frg_{\a}$ est non-nulle.\end{itemize} \end{Lm}

	 \begin{Rq}\label{Rq:formule} Exprimé par une formule, ce morphisme envoie $(x,y,z,\sigma_0+\cdots+\sigma_3)$, où $x,y,z\in\frg_{\b}, \sigma_n\in\frg_{\a+n\b}$, sur \[\sigma_3-\frac{x+y+z}{3}\sigma_2+\frac{xy+yz+zx}{6}\sigma_1-\frac{xyz}{6}\sigma_0.\] Ici $\frac{xy+yz+zx}{6}\sigma_1$ désigne $\frac{\ad(x)\ad(y)+\ad(y)\ad(z)+\ad(z)\ad(x)}{6}\sigma_1$, etc.
	 \end{Rq}

	La figure~\ref{Fig:complet} capture le diagramme des champs concernés.

	\begin{figure}
	\begin{tikzcd}
		\cY \ar[d, red, "\pi_{\cY}"] & \Bun_{B,\Om} \ar[l, swap,"j"] \ar[d, "\pi_B"] & & \Bun_{TU_{\b},\Om}^{sss} \ar[d, "\pi"] \\
		\Bun_{P^{\b}} \ar[d, "\pi_P"] & \Bun_{B/U_{\theta},\Om} \ar[dr, blue] & & \cY' \ar[dl, blue] \\
		\Bun_G & & \Bun_{TU_{\b},\Om} & 
	\end{tikzcd}
	\caption{} \label{Fig:complet}
	\end{figure}

Ici $\Om$ désigne le fibré en droites canonique sur la courbe $X$. S'il apparaît dans une indice, cela signifie des changements de base par rapport au morphisme $\Bun_{T,\Om}\to\Bun_T$ où $\Bun_{T,\Om}$ classifie  $\cF_T\in\Bun_T$ muni d'une \og trivialisation\fg{} $\ass{(\frg_{\a+3\b})}{T}\ito\Om$, où $\ass{(\frg_{\a+3\b})}{T}$ désigne le fibré en droites sur $X$ obtenu en tordant $\frg_{\a+3\b}$ par $\FT$ via l'action canonique de $T$ sur $\frg_{\a+3\b}$. Le champs $\cY$ classifie $\cF_{P^{\b}}\in\Bun_{P^{\b}}$ muni d'une application au-dessus de $X$ entre des fibrés vectoriels $\ass{(\frg_3)}{P^{\b}}\to\Om$. Le lieu ouvert de $\cY$ où cette dernière application est surjective est naturellement isomorphe à $\Bun_{B,\Om}$ via l'extension de groupes $B\to P^{\b}$. Les paires de flèches au-dessus d'une même base sont des fibrés vectoriels duaux. Donc $\cY'$ classifie $\cF_{TU_{\b}}\in\Bun_{TU_{\b},\Om}$ muni d'une application $\ass{(\frg_{\a+?\b})}{TU_{\b}}\to\Om$. Finalement $\Bun_{TU_{\b},\Om}^{sss}$ classifie 	$\cF_{TU_{\b}}\in\Bun_{TU_{\b},\Om}$ muni de \emph{trois} réductions numérotées dans $\Bun_{T,\Om}$. Il admet ainsi une action du groupe $S_3$. Le morphisme $\pi$ est obtenu en tordant le morphisme~\eqref{Eq:Sthree} par $\cF_{TU_{\b}}$ et en prenant ensuite des sections globales convenables. Il est fini et $S_3$-invariant. 

 Fixons tout au long de l'article un caractère nontrivial $\psi\colon\FF_p\to\Unit{\Qlb}$. Notons par $\Lpsi$ le \emph{faisceau d'Artin-Schreier} sur $\AA^1$ associé à $\psi$. Désignons par $\Four$ la transformation de Fourier identifiant la catégorie des faisceaux sur un fibré avec celle sur son dual via le noyau $\Lpsi$.

Une particularité: ayant la suite exacte non-scindée des homomorphismes de groupes
\[1\to Z^{\b}\to P^{\b}\to P^{\b}/Z^{\b}\to 1\] avec $Z^{\b}$ abélien, on a, pour un $\cF_{P^{\b}}$ donné, que la fibre du morphisme $\Bun_{P^{\b}}\to\Bun_{(P^{\b}/Z^{\b})}$ passant $\cF_{P^{\b}}$ est isomorphe à $\Bun_{(\ass{(Z^{\b})}{P^{\b}})}$. Ainsi la transformation de Fourier $\Fred$ associe à un faisceau sur $\Bun_{P^{\b}}$ un faisceau équivariant sur $\cY$. Son inverse $\Fred^{-1}=(\pi_{\cY})_!$.

Notre construction conjecturale est:
\begin{Conj}\label{Franel} 
Supposons $\mathrm{car}(k)\neq 2\text{ ou }3$. 
Alors, il existe un unique faiseau $\Fsr$ sur $\Bun_G$ tel que, sur chaque composante connexe $\Bun^d_{B,\Om}$ dont le degré $d=\dim\HHH^0(X, \ass{(U_{\b})}{T})$ est assez grand, on a (à décalage près) \[(\pi_P)^*\Fsr\ito (\pi_{\cY})_! j_{!*}(\pi_B)^*\Fblue(\pi_!\Ql).\]

Ce faisceau vérifie la propriété de Hecke~\eqref{Hecke} pour le paramètre d'Arthur unipotent sous-régulier~$\sigma:\SL_2\to\check{G}$.
    \end{Conj}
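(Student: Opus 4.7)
The plan proceeds in four stages: verifying that the right-hand side is well-defined, showing that the resulting object on $\Bun_{P^\b}$ descends along $\pi_P$ to $\Bun_G$, propagating the definition to all connected components, and finally checking the Hecke eigenproperty.

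First, I would analyze the sheaf $\pi_!\Ql$ on $\cY'$. Since $\pi$ is finite and $S_3$-invariant, it decomposes according to the irreducible representations $\rhotriv, \rhost, \rhosgn$ of $S_3$. Using the generalized Denef--Loeser result and the Thom--Boardman stratification established earlier in the paper, I would compute the generic rank of each isotypic piece. Then I would trace each piece through the Fourier transform $\Fblue$ and the pullback by $\pi_B$; since $\cY'\to\Bun_{TU_\b,\Om}$ and $\cY\to\Bun_{P^\b}$ are dual vector bundles (modulo the splitting built into $\pi_B$), the combined operation $(\pi_\cY)_! j_{!*}(\pi_B)^*\Fblue$ is a well-defined Radon-type transform. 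The intermediate extension $j_{!*}$ requires verifying that the restriction of $(\pi_B)^*\Fblue(\pi_!\Ql)$ to the open locus is already pure and irreducible on each isotypic component — this is where the generic rank computation is essential.

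Second, for descent along $\pi_P\colon\Bun_{P^\b}\to\Bun_G$, I would show that the resulting object is canonically $G/P^\b$-equivariant in the relative sense. The guiding idea is that the input data $\frg_{\a+?\b}$, $\frg_\b$, and $\frg_{\a+3\b}$ only depend on the underlying $G$-bundle modulo a choice of $P^\b$-reduction: changing the reduction corresponds to an action on the input spaces by a conjugate of $U^\b$, under which the $S_3$-invariant $TU_\b$-equivariant morphism of Remark~\ref{Rq:formule} is compatible. Step three would then use the Hecke property itself: once $\Fsr$ is constructed on a single $\Bun^d_{B,\Om}$ with $d$ large, Hecke modifications at a variable point of $X$ propagate the definition to all connected components and determine it uniquely up to shift.

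The principal obstacle is the Hecke eigenproperty~\eqref{Heckeunip}, and I would tackle it by the residue strategy originally envisaged by Langlands. Interpret $(\pi_{\cY})_! j_{!*}(\pi_B)^*\Fblue(\pi_!\Ql)$ as the \emph{geometric residue} of the Eisenstein series from $P^\b$ associated to a certain automorphic sheaf on the Levi $M^\b$ (of type $\mathrm{A}_1$). The Hecke action on geometric Eisenstein series factors through parabolic induction on the dual side, so on the Eisenstein series itself the eigenvalue is $(\Res^{\check G}_{\check M^\b} V)|_{\Gm}$ for an appropriate embedding $\Gm\subset\check M^\b$; the residue operation then selects the component corresponding to the subregular $\SL_2$. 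Concretely, I would:
\begin{enumerate}
\item reduce~\eqref{Heckeunip} to a short list of generators of $\Rep(\check G)$ (the seven-dimensional and the adjoint representations suffice for $\rmG_2$);
\item compute the convolution of $\Sat_x(V)$ with each step of the construction, using that $\pi_!\Ql$ on $\cY'$ is symmetric in the three reductions — matching the $S_3$ of the component group of the centralizer of the subregular $\SL_2$ in $\check G$;
\item identify the resulting graded multiplicities with $V|_{\Gm}$ under the subregular cocharacter.
\end{enumerate}
The hardest point is (ii): the interaction of the Satake action with the $S_3$-equivariant structure and with the non-abelian radical $U^\b$ has no direct precedent in the cases of type $D,E$ treated in~\cite{Lysenko}, where $U^\b$ is abelian. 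A degeneration argument along the central cocharacter $\b$ should isolate, in an Euler-characteristic computation, the unique $\SL_2$-graded factor that matches the subregular parameter; making this rigorous at the level of perverse sheaves rather than functions is where I expect most of the technical work to lie.
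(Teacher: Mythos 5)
You are attempting to prove a statement that the paper itself presents as a \emph{conjecture} (Conjecture~\ref{Franel}), not a theorem, and the paper provides no proof of it whatsoever. Immediately after the statement, the author writes only: \og Cette conjecture vient des calculs que nous avons effectués basés sur les articles~\cite{Lysenko} et~\cite{Gan}\fg. The sole rigorous result the paper establishes toward the conjecture is Proposition~\ref{Pp:pervers}, which computes the generic rank of each $S_3$-isotypic component of $\Fblue(\pi_!\Ql)$ on a large-degree connected component --- and that is explicitly described as merely \og une première étape vers notre conjecture\fg. Your first stage (the generic rank computation via the Denef--Loeser generalization and the Thom--Boardman dimension bound) is indeed the content of the paper, but it appears under Proposition~\ref{Pp:pervers}, not as part of a proof of the Conjecture. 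The descent along $\pi_P$, the propagation to all components, and above all the Hecke eigenproperty are not established anywhere in the text.

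There is also a concrete flaw in your proposed route to the Hecke eigenproperty. You suggest interpreting the construction as a geometric residue of Eisenstein series from $P^\b$. But the introduction of the paper explicitly identifies this as the open problem that the construction is designed to circumvent: \og même si les séries d'Eisenstein géométriques existent, voir~\cite{Braverman}, jusqu'à présent on ne sait pas comment en prendre les \og résidus géométriques\fg\fg. So the residue strategy is not an available tool here --- the whole point of the conjectural construction via $\Fblue$, $j_{!*}$, and $(\pi_\cY)_!$ is to sidestep it. Your item (ii), the interaction of the Satake action with the $S_3$-equivariant structure in the presence of the non-abelian radical $U^\b$, is correctly identified by you as having no precedent, and indeed it remains unresolved: the paper offers no argument for it. In short, what you have written is a plausible research program rather than a proof, and since the paper provides no proof there is nothing in the text to compare it against beyond the rank computation of Proposition~\ref{Pp:pervers}, which your stage one correctly anticipates.
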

    
   Cette conjecture vient des calculs que nous avons effectués basés sur les articles~\cite{Lysenko} et~\cite{Gan}.

\sec{Résultats principaux}                  
   Comme première étape vers notre conjecture, nous avons mené une étude détaillée du faisceau $\Fblue(\pi_!\Ql)$ qui en est l'élément principal. Il s'agit donc de la figure~\ref{Fig:Four}, qui est une partie de la figure~\ref{Fig:complet}.

	\begin{figure}
	\begin{tikzcd}
		& & \Bun_{TU_{\b},\Om}^{sss} \ar[d, "\pi"] \\
		 \Bun_{B/U_{\theta},\Om} \ar[dr, blue] & & \cY' \ar[dl, blue] \\
		 & \Bun_{TU_{\b},\Om} & 
	\end{tikzcd}
	\caption{} \label{Fig:Four}
	\end{figure}

 Nous aboutissons en particulier au résultat suivant:
    
    \begin{Pp} \label{Pp:pervers}
    Supposons $\mathrm{car}(k)\neq 2\text{ ou }3$. Alors, pour $d$ assez grand, le faisceau $\Fblue(\pi_!\Ql)$ sur $\Bun^d_{B/U_{\theta},\Om}$ est pervers (décalé), génériquement un système local (décalé) de rang \[\rmF_d\colonequals\sum_{i=0}^d\binom{d}{i}^3.\]  
    
    Il admet une action du groupe $S_3$. Soit $\oplus_{\rho}\Frho\boxtimes\rho$ sa décomposition en composantes $S_3$-isotypiques, où $\rho$ parcourt les trois représentations irréductibles $\rhotriv,\rhost,\rhosgn$ de $S_3$. Alors $\Frho$ est un faisceau pervers irréductible dont la restriction sur un ouvert non-vide de $\Bun^d_{B/U_{\theta},\Om}$ est le décalage d'un système local de rang

\begingroup
\renewcommand{\arraystretch}{1.5}
\begin{tabular}{ll}
             $\rmF_d/6-(-2)^{d-1}+2^d/3$, & pour $\rho=\rhotriv$,\\
             $\rmF_d/3-2^d/3$, & pour $\rho=\rhost$,\\
             $\rmF_d/6+(-2)^{d-1}+2^d/3$, & pour $\rho=\rhosgn$.
\end{tabular}          
\endgroup

\end{Pp}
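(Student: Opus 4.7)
La stratégie combine trois outils : la $S_3$-équivariance du morphisme $\pi$, le changement de base propre pour la transformation de Fourier, et le théorème équivariant de Denef--Loeser sur les sommes trigonométriques annoncé au résumé. Le morphisme $\pi$ étant affine et $S_3$-équivariant par construction (via l'équation~\eqref{Eq:Sthree}), le complexe $\pi_!\Ql$ est semi-pervers à décalage près sur $\cY'$, et son image par la transformation de Fourier est perverse (à décalage) sur $\Bun^d_{B/U_\theta,\Om}$. Pour $d$ assez grand, la finitude générique de $\pi$ (qui se lit sur la formule explicite de la Remarque~\ref{Rq:formule}) entraîne que $\Four(\pi_!\Ql)$ est, sur un ouvert dense, le décalé d'un système local ; le groupe $S_3$ agit sur celui-ci par l'action naturelle sur $\pi$.

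Pour calculer le rang générique en un point $\xi\in\Bun^d_{B/U_\theta,\Om}$ se projetant sur $\cF_{TU_\b}$, on utilise le changement de base propre :
\[\Four(\pi_!\Ql)_\xi \ito \RGc\bigl(F_\xi,\,\Lpsi(\bra\xi,\pi(\cdot)\ket)\bigr),\]
où $F_\xi = \{\cF_{TU_\b}\}\times_{\Bun_{TU_\b,\Om}}\Bun_{TU_\b,\Om}^{sss}$. En explicitant l'accouplement grâce à la Remarque~\ref{Rq:formule}, cette cohomologie s'identifie à une somme trigonométrique paramétrée par les triples $(x,y,z)\in\HX{\ass{(\frg_\b)}{T}}^3$. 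L'application du théorème équivariant de Denef--Loeser, couplée à l'analyse de la stratification de Thom--Boardman pour la multiplication des sections globales $\HX{\cL_1}\otimes\HX{\cL_2}\to\HX{\cL_1\otimes\cL_2}$, permet d'évaluer cette somme au nombre de Franel $\rmF_d = \sum_{i=0}^d\binom{d}{i}^3$.

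Pour la décomposition $S_3$-isotypique, on utilise l'orthogonalité des caractères :
\[\rg(\Frho)=\frac{1}{|S_3|}\sum_{g\in S_3}\chi_\rho(g)\,\Tr\bigl(g\mid\Four(\pi_!\Ql)_\xi\bigr).\]
Chaque trace se calcule par Grothendieck--Lefschetz comme la somme trigonométrique sur le lieu des points $g$-fixes, tordue par $\Lpsi$. Pour une transposition, le lieu fixe impose deux réductions coïncidentes (triples de la forme $(x,y,y)$), et la somme se ramène à une intégrale sur un espace de dimension $\sim d$ dont l'évaluation par Denef--Loeser vaut $\pm 2^d$ ; pour un $3$-cycle, la diagonale $(x,x,x)$ fournit une évaluation $\pm(-2)^d$. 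La résolution du système linéaire $3\times 3$ en les multiplicités donne alors exactement les trois formules annoncées. L'irréductibilité de chaque $\Frho$ s'obtient en montrant que la monodromie du système local sous-jacent agit de manière irréductible sur la composante correspondante, ce qui permet d'écrire $\Frho$ comme $j_{!*}$ d'un système local irréductible sur un ouvert.

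L'obstacle principal est le calcul des sommes trigonométriques équivariantes sur $F_\xi$ et sur ses lieux $g$-fixes : identifier précisément ces intégrales avec $\rmF_d$, $\pm 2^d$ et $\pm(-2)^d$ constitue le cœur technique de la preuve, et c'est là qu'interviennent de manière essentielle les deux résultats principaux annoncés au résumé — le théorème équivariant de Denef--Loeser d'une part, et l'inégalité subtile sur les dimensions des strates de Thom--Boardman d'autre part, nécessaire pour contrôler le lieu où la somme trigonométrique a son comportement générique.
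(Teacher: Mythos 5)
Your outline matches the paper's high-level structure (reduction to a fiber over $\Bun_{T,\Om}$, interpretation as a trigonometric sum, orthogonality of characters to extract isotypic ranks), but the heart of the argument — the explicit values $\rmF_d$, $\pm 2^d$, $\pm(-2)^d$ for the equivariant traces — is asserted rather than established, and the route you sketch for them would not give these values directly.

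Concretely, you propose to compute $\Tr(\gamma;\Hcet^\bullet(\HX{\cL}^3,(l\circ\phi)^*\Lpsi))$ by a Grothendieck–Lefschetz localization to the $\gamma$-fixed locus and then applying Denef–Loeser there. This is a genuinely different route from the paper's (which never localizes to fixed loci: it stratifies $\HX{\cL}^3$ by tori, observes that only $\gamma$-stable strata contribute to the trace, and on each such stratum applies the \emph{equivariant} Denef–Loeser theorem~\ref{Thm:trigtore}/\ref{Pp:tracevol}, yielding $(-1)^{\dim T}\det(\gamma;\Lambda_T)\det(1-\gamma;(\Delta^\gamma-\Delta^\gamma)^\perp)\dim(\Delta^\gamma)!\Vol(\Delta^\gamma)$ in terms of the \emph{fixed sub-polytope} $\Delta^\gamma$, not the geometric fixed locus $T^\gamma$; the combinatorial sum over strata is then the generating-function identity of Remarque~\ref{Rq:coeffs}). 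Even if your Lefschetz localization is legitimate, the fixed locus of a transposition in $\HX{\cL}^3$ has dimension $2d$, and a direct Denef–Loeser evaluation on its generic torus stratum gives a quantity of size roughly $\binom{2d}{d}2^d$, not $2^d$: one must again stratify by tori and exploit cancellations between strata, which is precisely the content of Propositions~\ref{Pp:traceI} and~\ref{Pp:TraceII} that you are bypassing. Without that combinatorial identity (the $\cC_\gamma$ generating functions), the claim \og l'évaluation par Denef–Loeser vaut $\pm 2^d$\fg{} is unjustified, and the sign is left undetermined, so the final three formulas are not actually derived. Two smaller issues: you only use that $\pi$ is affine to claim semi-perversity, whereas the paper uses that $\phi$ (hence $\pi$ after base change) is \emph{finite} and \emph{generically Galois} with group $S_3$ — this is what gives perversity \emph{and} the irreducibility of each $\cF_\rho$ in one stroke, whereas your \og monodromie irréductible\fg{} argument is left unsubstantiated; and your invocation of Thom–Boardman is at the right place (ensuring the non-degeneracy hypothesis of Denef–Loeser on each torus stratum via Lemme~\ref{Lm:duallisse}), but you do not say which morphism it is applied to, which is where Théorème~\ref{Thm:ThomII} enters.
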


\begin{Rq} Cette $S_3$-symétrie doit venir du fait que le centralisateur de l'image du paramètre sous-régulier $\sigma: \SL_2\to\check{G}=\rmG_2$ est $S_3$.    \end{Rq}

\begin{Rq}
Ces nombres $\rmF_d$ sont appelés \emph{nombres de Franel}, puisque Franel, dans son article~\cite{Franel}, en répondant à une question de Laisant, a trouvé qu'ils satisfont
\[d^2\rmF_d = (7d^2 - 7d + 2)\rmF_{d-1}+ 8(d-1)^2\rmF_{d-2},\text{  pour tout }d\geq 2.\]

Pour $d=0,1,2,\ldots$, ces nombres $\rmF_d$ sont donnés par:

1, 2, 10, 56, 346, 2252, 15184, 104960, 739162, 5280932, 38165260, 278415920, 2046924400, 15148345760, 112738423360, 843126957056,~$\ldots$

Don Zagier l'appelle \og séquence A\fg{} dans~\cite{Zagier}, il l'a trouvée comme la première des six suites entières sporadiques satisfaisant une certaine équation de récurrence \og à la Apéry\fg{}. 

Dans un certain sens, nous introduisons une structure $S_3$-équivariante sur ces nombres.

On ne peut s'empêcher de spéculer sur la possibilité que ses autres suites sporadiques, qu'il a démontrées être \emph{modulaires}, soient liées aux autres faisceaux automorphes unipotents.
\end{Rq}

\begin{proof}[Démonstration de la proposition~\ref{Pp:pervers}] 

Suivons les notations de l'énoncé.

 Pour $d$ assez grand, le morphisme $\Bun_{T,\Om}\to\Bun_{TU_{\b},\Om}$ est un fibré vectoriel. En particulier, il est lisse et surjectif. Il suffit donc de démontrer l'analogue de l'énoncé après le changement de base par rapport à ce morphisme. Décrivons l'analogue de la figure~\ref{Fig:Four} après ce changement de base. Sur un point $\bullet\to\Bun_{T,\Om}$ correspondant à un fibré $\FT$ (muni de $\ass{(\frg_{\a+3\b})}{T}\ito\Om$) , en notant $\cL\colonequals\ass{(U_{\b})}{T}$ le fibré en droites associé, et d'après la remarque~\ref{Rq:formule} concernant le morphisme $\pi$, la figure~\ref{Fig:Four} devient la figure~\ref{Fig:Fourpt}.\footnote{A vrai dire, selon la remarque~\ref{Rq:formule},  le morphisme $\phi$ ici aurait dû être suivi de l'automorpisme linéaire de $\oplus_{n=0}^3\HX{\cL^{\otimes n}}$ induit par des homothéties de rapport $(-1)^n\frac{(3-n)!}{3!}$ sur $\HX{\cL^{\otimes n}}$ pour $n=0,1,\ldots,3$. Nous négligeons volontairement cet automorphisme, qui n'a pas d'impacts sur la suit de nos analyses, pour simplifier la présentation.}

	\begin{figure}
	\begin{tikzcd}
		& & \HX{\cL}^3 \ar[d, "\phi"] & (s_i)\ar[d, mapsto]\\
		\bigoplus\limits_{n=0}^3\HHH^1(X,\cL^{\otimes (-n)}\otimes\Omega) \ar[dr, blue] & &\bigoplus\limits_{n=0}^3\HX{\cL^{\otimes n}}\ar[dl, blue] & \prod(1+s_i)\\
		 &\bullet & &
	\end{tikzcd}
	\caption{} \label{Fig:Fourpt}
	\end{figure}

La perversité et l'irréductibilité des faisceaux $\cF_{\rho}$ se voient aussitôt car $\phi$, et donc $\pi$,  est fini et génériquement galoisien (sur son image) avec le groupe de Galois $S_3$. La partie concernant les rangs génériques des $\cF_{\rho}$ se reformule comme dans la proposition~\ref{Pp:traceIIthree} suivant.
\end{proof}

\begin{Pp} \label{Pp:traceIIthree}
Pour tout $\gamma\in S_3$ et tout $l\in\oplus_{n=0}^3\HHH^1(X,\cL^{\otimes (-n)}\otimes\Omega)$ générique,  \begin{multline*}\Tr(\gamma;\Hcet^\bullet(\HX{\cL}^3,(l\circ\phi)^*\Lpsi))\\=\begin{cases} (-2)^d, &\text{ lorsque }\gamma\text{ est transitif};\\2^d, &\text{ lorsque }\gamma \text{ est une transposition}; \\ (-1)^d\rmF_d, &\text{ lorsque }\gamma\text{ est l'identité}. \end{cases}\end{multline*}
\end{Pp}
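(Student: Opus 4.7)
Mon plan procède en trois étapes. La formule de trace de Verdier--Lefschetz, appliquée à $(l\circ\phi)^*\Lpsi$ avec son action $\gamma$-équivariante (triviale sur les fibres, car $l\circ\phi$ est $S_3$-invariant), ramène le calcul à une caractéristique d'Euler à supports compacts sur le lieu fixe $Y_\gamma:=(\HX{\cL}^3)^\gamma$~:
\[
T_\gamma(l) \;=\; \chi_c\bigl(Y_\gamma,\;(l\circ\phi|_{Y_\gamma})^*\Lpsi\bigr).
\]
Posons $H:=\HX{\cL}$, $d=\dim H$. D'après la remarque~\ref{Rq:formule}, $\phi:(s_1,s_2,s_3)\mapsto\prod_i(1+s_i)$, et les trois lieux fixes avec restrictions correspondantes sont~: la diagonale $H\cong\AA^d$ avec $s\mapsto(1+s)^3$ (3-cycle); le plan $H^2\cong\AA^{2d}$ avec $(s,t)\mapsto(1+s)^2(1+t)$ (transposition); et $H^3\cong\AA^{3d}$ tout entier (identité).

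Les cas du 3-cycle et de la transposition se règlent par phase stationnaire et transformation de Fourier partielle. Pour le 3-cycle, $f(s)=l_0+3l_1(s)+3l_2(s^2)+l_3(s^3)$ est une cubique sur $\AA^d$, cohomologiquement tamée pour $l$ générique; par Deligne, $\chi_c=(-1)^d\mu(f)$ avec $\mu(f)$ le nombre total de Milnor, égal au cardinal du lieu critique (Morse). L'équation $df=0$ se réduit aux $d$ équations quadratiques $l_1+2l_2(s\,\cdot)+l_3(s^2\,\cdot)=0$ dans $H^\vee$, qui ont $2^d$ solutions par Bezout, d'où $T_{(123)}=(-2)^d$. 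Pour la transposition, $f(s,t)=F(s)+G_s(t)$ avec $G_s(t)=l_1(t)+2l_2(st)+l_3(s^2t)$ linéaire en $t$; la projection sur $s$, combinée à l'annulation Artin--Schreier sur les fibres où $G_s\not\equiv 0$, localise la cohomologie sur le sous-schéma zéro-dimensionnel $Z\subset\AA^d$ défini par le même système quadratique. Chaque point de $Z$ contribue $\chi_c=1$, d'où $T_{(12)}=|Z|=2^d$.

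Le cas de l'identité constitue l'obstacle principal. Les équations critiques $\Phi_i:=l_1+l_2((s_j+s_k)\,\cdot)+l_3(s_j s_k\,\cdot)=0$ ($i=1,2,3$) donneraient par Bezout naïf $2^{3d}\gg\rmF_d$; l'excédent provient de la $S_3$-symétrie qui factorise les différences $\Phi_i-\Phi_j=B_{s_k}(s_i-s_j,\,\cdot)$ en termes de la forme bilinéaire hessienne $B_s(u,v):=l_2(uv)+l_3(uvs)$. Je stratifie donc le lieu critique par type d'orbite $S_3$. Sur la diagonale $\{s_1=s_2=s_3\}$, on retrouve les $2^d$ points du 3-cycle, Morse car leur hessien dans $H^3$ est de la forme bloc $\tfrac{1}{2}(J-I)\otimes B_s$ (avec $J$ la matrice $3\times 3$ tout-un et $I$ l'identité), de spectre $\{2B_s,-B_s,-B_s\}$, non-dégénéré pour $l$ générique; chaque point y contribue~$1$. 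La strate \og exactement deux $s_i$ égaux\fg{} est vide génériquement, puisque la condition $B_{s_k}(s_i-s_j,\,\cdot)=0$ avec $B_{s_k}$ non-dégénéré impose $s_i=s_j$. Reste la strate \og tous distincts\fg{}, où les trois $B_{s_k}$ doivent être simultanément dégénérés de manière coordonnée : c'est ici qu'interviennent la stratification de Thom--Boardman de la multiplication des sections globales et l'inégalité dimensionnelle subtile, outils principaux du présent article. Une analyse strate par strate, selon les corangs respectifs des $B_{s_k}$, doit produire les $\rmF_d-2^d$ points critiques restants, s'organisant combinatoirement via $\rmF_d=\sum_i\binom{d}{i}^3$. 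Avec le signe $(-1)^{3d}=(-1)^d$ de la formule de Deligne, on conclut $T_{\id}=(-1)^d\rmF_d$.
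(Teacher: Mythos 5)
Votre plan — réduction au lieu fixe de $\gamma$ via Lefschetz équivariant, puis comptage de points critiques — est une voie réellement différente de celle de l'article : celui-ci se ramène à la proposition~\ref{Pp:TraceII} et à la remarque~\ref{Rq:coeffs}, en stratifiant $\HX{\cL}^3$ par des tores coordonnés (au moyen de points bien choisis de la courbe) et en appliquant sur chaque strate la formule de Denef--Loeser équivariante (théorème~\ref{Thm:trigtore}), traitant ainsi tous les $\gamma$ d'un seul mouvement ; c'est la combinatoire des volumes des sous-polytopes fixes qui produit les $\cC_\gamma$, et en particulier $\rmF_d=\sum_i\binom{d}{i}^3$. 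Votre réduction au lieu fixe est correcte ($|S_3|$ premier à $p$, action triviale sur les fibres puisque $l\circ\phi$ est $S_3$-invariante). Pour le 3-cycle, la restriction est bien un polynôme de Deligne pour $l$ générique (la lissité de $l_3(s^3)$ dans $\PP(H)$ tient à l'injectivité de $s\mapsto s^2\cdot$), d'où $(-2)^d$. Pour la transposition, la Fourier partielle en $t$ est la bonne idée, mais il vous faut encore que $Z$ soit \emph{réduit} (la cohomologie étale ne voit pas la multiplicité), ce que vous ne vérifiez pas — c'est exactement le type de non-dégénérescence que le théorème~\ref{Thm:ThomII} est conçu pour établir.

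La lacune rédhibitoire est le cas $\gamma=\id$. L'énoncé \og une analyse strate par strate [\dots] doit produire les $\rmF_d-2^d$ points critiques restants\fg{} est une déclaration d'intention, non une preuve. D'abord, la finitude du lieu critique sur $\AA^{3d}$ n'est pas acquise : chaque $\partial_i f$ ne dépend pas de $s_i$, l'intersection est hautement excédentaire (Bezout donnerait $2^{3d}$), et déjà ce point demande un argument. Ensuite, pour vider la strate \og exactement deux égaux\fg{}, vous supposez $B_{s_k}$ non-dégénérée, ce qui échoue sur une sous-variété dont estimer la dimension est précisément l'objet du théorème~\ref{Thm:ThomII}. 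Enfin, et surtout, rien dans votre esquisse n'explique pourquoi un comptage Morse par corangs produirait $\sum_i\binom{d}{i}^3$ : cette identité sort naturellement de la combinatoire des faces du polytope de Newton produit dans l'approche de l'article, mais on ne voit pas comment l'obtenir d'une stratification par corangs sans refaire, au fond, le même calcul de polytope. En l'état, le seul cas vraiment difficile reste entier.
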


Cette proposition sera le sujet du reste de l'article. Elle est un cas particulier de la proposition~\ref{Pp:TraceII}, compte tenu de la remarque~\ref{Rq:coeffs}. 

\ssec{Deux résultats en géométrie algébrique}
Notre démonstration de la proposition~\ref{Pp:traceIIthree} est basée sur deux résultats, qui nous paraissent nouveaux, en géométrie algébrique. Présentons-les dans cette partie.

  Le premier concerne les sommes trigonométriques sur les tores. Soit $k$ un corps algébriquement clos de caractéristique $p>0$. Soit $\rmT$ un tore sur $k$. Notons par $\Lambda$ le réseau des poids de $\rmT$. Soit $f=\sum_{\lambda\in\Lambda}c_{\lambda}t^\lambda$ une fonction sur $\rmT$. Notons par $\Delta(f)\subset\LtR$ l'enveloppe convexe de $\{\lambda|c_{\lambda}\neq 0\}$, et par $\Dinf(f)$ l'enveloppe convexe de $\{0\}\cup\Delta(f)$. On dit que $f$ est \emph{non-dégénérée à l'infini} si pour toute face $\Gamma$ de $\Dinf(f)$ ne contenant pas~$0$, les conditions suivantes sont satisfaites: \begin{itemize} \item Le diviseur sur $\rmT$ défini par $f|_{\Gamma}\colonequals\sum_{\lambda\in\Gamma\cap\Lambda}c_{\lambda}t^\lambda$ est lisse. \item Il existe $l\in\dual{\Lambda}$ telle que, considérée comme fonction sur $\Gamma$, elle est constante de valeur (entière) non-divisible par~$p$.\end{itemize}
     
     Un résultat de J. Denef et F. Loeser, voir~\cite[Theorem (1.3)]{Loeser}, dit que si $f$ est non-dégénérée à l'infini et si $\Delta\colonequals\Dinf(f)$ est de dimension égale à $\dim\rmT$, alors la caractéristique d'Euler-Poincaré de la cohomologie $\ell$-adique à support compact $\Hcet^\bullet(\rmT,f^*\Lpsi)$ est donnée par \[(-1)^{\dim\rmT}(\dim\Delta)!\Vol(\Delta).\]
     
     On le généralise au cas équivariant où $f$ est fixée par un groupe fini $\frG\subset\Aut(\Lambda)$. Pour tout $\gamma\in\frG$, notons par $\Delta^\gamma\subseteq\Delta$ le sous-polytope des points fixes sous $\gamma$. Alors, sous les mêmes hypothèses, nous montrons:
     \begin{Thm}[Théorème~\ref{Thm:trigtore} et Proposition~\ref{Pp:tracevol}] \label{Thm:principal} Pour tout $\gamma\in\frG$, \begin{multline*}\Tr(\gamma;\Hcet^\bullet(\rmT,f^*\Lpsi))\\=(-1)^{\dim\rmT}\det(\gamma;\Lambda)\det(1-\gamma;(\Delta^\gamma-\Delta^\gamma)^{\perp})\dim(\Delta^\gamma)!\Vol(\Delta^\gamma),\end{multline*} où $(\Delta^\gamma-\Delta^\gamma)^{\perp}$ désigne l'espace linéaire $\{l\in\dLtR|l\text{ est constante sur }\Delta^\gamma\}$.
     \end{Thm}
      Pour $\gamma=1$, on retrouve le résultat cité ci-dessus. 
     
     Pour montrer cette généralisation, on est réduit, en utilisant la \emph{formule de Grothendieck-Ogg-\v Safarevi\v c} comme dans~\cite[Proposition (3.2)]{Loeser},  à calculer \[\Tr(\gamma;\Hcet^\bullet(\rmT,G^{-1}(0)))\] où $G\colonequals f-c$ avec $c$ générique. L'analogue de cette trace en caractéristique~$0$ a été calculée par Stapledon dans~\cite[Theorem 6.5]{Stapledon}. Pour faire la comparaison, nous montrons qu'il existe un relèvement $\frG$-invariant $\hat{G}$ de $G$ sur l'\emph{anneau de Witt} $R\colonequals\mathrm{W}(k)$ et montrons que $\hat{G}^{-1}(0)$ admet une compactification lisse sur $R$ dans laquelle le complément de $\hat{G}^{-1}(0)$ est un diviseur strictement à croisements normaux relatif à $R$, et que l'action de $\frG$ sur $\hat{G}^{-1}(0)$ s'étend sur cette compactification. 
     
     En caractéristique~$0$, nous avons le résultat plus général suivant.
     \begin{Pp}[Propositions~\ref{Pp:Hilbert} et~\ref{Pp:coh_poly}] Soit $F$ un corps algébriquement clos de caractéristique~$0$, soit $\cP$ une variété lisse sur $F$, $\cX\subset\cP$ un diviseur lisse, et $D\hto\cP$ un diviseur strictement à croisements normaux tel que l'immersion fermée $D\cap\cX\hto\cX$ l'est aussi. Soit $\frG$ un groupe agissant sur $\cP$ de façon à préserver $\cX,D$. Supposons les conditions suivantes vérifiées:\begin{itemize}
    \item L'inclusion canonique $F\hto\HHH^0(\cP,\cO_{\cP})$ est un isomorphisme;
    \item Le fibré vectoriel $\Omega^1_{\cP}(\log D)$ sur $\cP$ est trivialisable;
    \item En tant que $F[\frG]$-module, $\HHH^0(\cP,\Omega^1_{\cP}(\log D))$ est isomorphe à son dual. On le note par $\rho$.
    \end{itemize}
Alors, il existe un (unique) polynôme $\phi(s)$ à coefficients dans le $\rmK$-groupe $\rmK(F[\frG]-\mathrm{mod})$, tel que la \emph{série de Hilbert-Poincaré} satisfait \[\sum_{n\geq 0}\HHH^\bullet(\cP,\cO(n\cX))s^n=\frac{\phi(s)}{(1-s)\det(1-s\rho)}\] dans $\rmK(F[\frG]-\mathrm{mod})[[s]]$, et tel que, en notant $\cXo\colonequals\cX-\cX\cap D$, la cohomologie de de Rham algébrique satisfait \[\Hdr^\bullet(\cXo)=\Hdr^\bullet(\cP-D)-(-1)^{\dim\cP}\det(\rho)\phi(1)\] dans $\rmK(F[\frG]-\mathrm{mod})$.
\end{Pp}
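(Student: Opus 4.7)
L'idée centrale est d'interpréter la série génératrice $A(s):=\sum_{n\geq 0}[\HHH^\bullet(\cP,\cO(n\cX))]\,s^n$, considérée dans $\rmK(F[\frG]-\mathrm{mod})[[s]]$, comme issue d'un complexe de de Rham logarithmique à pôles variables le long de $\cX$. La trivialisation $\frG$-équivariante $\Omega^p_\cP(\log D)\cong\cO_\cP\otimes_F\wedge^p\rho$ fournie par les hypothèses permet, pour chaque $n\in\ZZ$, de former le complexe $\hat\Omega^\bullet_n$ dont le $p$-ième terme est $\wedge^p\rho\otimes_F\cO((n+p)\cX)$ et dont la différentielle est la dérivée extérieure $d$ (bien définie car elle augmente d'au plus un l'ordre du pôle le long de $\cX$). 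Le colimite en $n$ de ces complexes est $\Omega^\bullet_\cP(\log D)(*\cX)$, dont l'hypercohomologie calcule $\Hdr^\bullet(\cP\setminus(D\cup\cX))$ par le théorème de comparaison de Deligne.

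L'étape clé sera la stabilisation: pour $n$ assez grand, le morphisme $\HHH^\bullet(\cP,\hat\Omega^\bullet_n)\to\Hdr^\bullet(\cP\setminus(D\cup\cX))$ est un isomorphisme. Ce sera le point le plus délicat techniquement, reposant sur le fait que l'hypercohomologie commute aux colimites filtrantes de faisceaux quasi-cohérents (pour $\cP$ noethérienne) et sur la finitude dimensionnelle du colimite. Une fois la stabilisation acquise, la caractéristique d'Euler donne
\[\chi(\Hdr^\bullet(\cP\setminus(D\cup\cX)))=\sum_p(-1)^p[\wedge^p\rho]\cdot[\HHH^\bullet(\cP,\cO((n+p)\cX))]\qquad(n\gg 0)\]
dans $\rmK(F[\frG]-\mathrm{mod})$.

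L'hypothèse d'auto-dualité $\rho\cong\rho^*$, combinée à l'isomorphisme standard $\wedge^{\dim\rho-p}\rho\cong(\wedge^p\rho)^*\otimes\det(\rho)$ et à la substitution $p\mapsto\dim\rho-p$, permet alors de réécrire cette identité sous la forme duale
\[\sum_p(-1)^p[\wedge^p\rho]\cdot[\HHH^\bullet(\cP,\cO((N-p)\cX))]=(-1)^{\dim\cP}\det(\rho)\cdot\chi(\Hdr^\bullet(\cP\setminus(D\cup\cX)))\qquad(N\gg 0),\]
utilisant $\det(\rho)^{-1}\cong\det(\rho)$ (une représentation de dimension un auto-duale étant d'ordre divisant $2$). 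Reconnaissant le membre de gauche comme coefficient de $s^N$ dans $A(s)\det(1-s\rho)$, on en déduit que $\phi(s):=(1-s)A(s)\det(1-s\rho)$ est un polynôme dans $\rmK(F[\frG]-\mathrm{mod})[s]$, unique par divisibilité, et que $\phi(1)=(-1)^{\dim\cP}\det(\rho)\cdot\chi(\Hdr^\bullet(\cP\setminus(D\cup\cX)))$.

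Pour la formule sur $\Hdr^\bullet(\cXo)$, j'utiliserai la suite exacte des résidus
\[0\to\Omega^\bullet_\cP(\log D)\to\Omega^\bullet_\cP(\log(D+\cX))\to\Omega^{\bullet-1}_\cX(\log(D\cap\cX))\to 0,\]
valable car $D\cap\cX\hto\cX$ est à croisements normaux. Hypercohomologie et caractéristiques d'Euler --- en tenant compte du décalage $[-1]$ qui produit un signe --- donnent $\chi(\Hdr^\bullet(\cXo))=\chi(\Hdr^\bullet(\cP\setminus D))-\chi(\Hdr^\bullet(\cP\setminus(D\cup\cX)))$, et la substitution de l'expression de $\chi(\Hdr^\bullet(\cP\setminus(D\cup\cX)))$ en termes de $\phi(1)$ conclut la démonstration.
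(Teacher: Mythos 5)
Ta stratégie reprend les mêmes ingrédients que l'article mais les organise différemment. Le texte pose $\Omega^{n,m}\colonequals\Omega^m_\cP(\log D)\otimes\cO(n\cX)$, construit sur les quotients $\Omega^{n,m}/\Omega^{n-1,m}$ une résolution de type Koszul donnée par $\wedge\frac{\rmd f}{f}$, en déduit (Lemme~\ref{Lm:independance_difference}) que les sommes $\sum_{n-m=i,\;n,m\geq 0}(-1)^m\HHH^\bullet(\cP,\Omega^{n,m})$ ne dépendent pas de $i\geq 0$, puis obtient $\phi(s)$ par manipulation directe de la série génératrice; la formule pour $\Hdr^\bullet(\cXo)$ est établie séparément par la Proposition~\ref{Pp:cohomology_normal_crossing}, fondée sur le lemme d'Atiyah--Hodge. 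Toi, tu passes par les complexes $\hat\Omega^\bullet_n$ munis de la différentielle extérieure $\rmd$, leur colimite méromorphe $\Omega^\bullet_\cP(\log D)(*\cX)$, et la suite de résidus associée à $D+\cX$: pour la seconde formule c'est un raccourci naturel et correct (l'hypothèse que $D\cap\cX\hto\cX$ est SNC entraîne bien que $D+\cX$ l'est, ce dont la suite de résidus a besoin). Les manipulations d'auto-dualité ($\wedge^{d-p}\rho\cong\det\rho\otimes\wedge^p\rho$, $\det(\rho)^2=1$) coïncident avec celles du texte.

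Le point qui ne tient pas est la justification de la «stabilisation». La commutation de l'hypercohomologie aux colimites filtrantes et la dimension finie de $\Hdr^\bullet(\cP\setminus(D\cup\cX))$ donnent la surjectivité de $\HHH^\bullet(\cP,\hat\Omega^\bullet_n)\to\Hdr^\bullet(\cP\setminus(D\cup\cX))$ pour $n$ grand, mais ni l'injectivité, ni même la constance des classes dans $\rmK(F[\frG]-\mathrm{mod})$: un système filtrant d'espaces de dimension finie peut avoir une colimite de dimension finie sans que les dimensions des termes se stabilisent, dès que les applications de transition ont des noyaux. L'argument qui manque est exactement celui que le texte place au c\oe ur de la preuve: le complexe quotient $\hat\Omega^\bullet_{n+1}/\hat\Omega^\bullet_n$, dont le $p$-ième terme est $\wedge^p\rho\otimes\bigl(\cO((n+p+1)\cX)/\cO((n+p)\cX)\bigr)$ et dont la différentielle induite par $\rmd$ est, à un scalaire non nul près, $\wedge\frac{\rmd f}{f}$, est un complexe de Koszul \emph{exact} --- c'est le premier lemme de la section~\ref{Sec:de_Rham}, et c'est précisément là qu'intervient l'hypothèse que $D\cap\cX\hto\cX$ est SNC. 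C'est cette exactitude, et non la commutation aux colimites, qui rend les inclusions $\hat\Omega^\bullet_n\hto\hat\Omega^\bullet_{n+1}$ des quasi-isomorphismes et permet de conclure; ton plan ne contourne donc pas le calcul de Koszul du texte, il le rend implicite.
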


 Pour une fonction $G$ non-dégénérée et $\frG$-invariante sur un tore $\rmT$ comme ci-dessus, notons $\Delta=\Delta(G)$. On peut s'arranger pour trouver des triplets $(\cP,\cX,D)$ comme ci-dessus tels que \begin{gather*}\cXo=G^{-1}(0),\cP-D=\rmT,\rho=\Lambda\otimes F,\\\HHH^\bullet(\cP,\cO(n\cX))=F[(n\Delta)\cap\Lambda)].\end{gather*} C'est la base de tout nos calculs et fait le lien avec la \emph{série d'Ehrhart} \[\sum_{n\geq 0}F[(n\Delta)\cap\Lambda)]s^n\] dont la trace sous $\gamma\in\frG$, lorsque $s\to 1^-$, est asymptotiquement équivalente à \[\frac{\dim(\Delta^\gamma)!\Vol(\Delta^\gamma)}{(1-s)^{\dim(\Delta^\gamma)+1}}.\]

 \sssec{La non-dégénérescence}
  Pour analyser $\Fblue(\phi_!\Ql)$ ainsi que l'action de $S_3$ là-dessus, nous stratifions l'espace vectoriel $\oplus_{n=0}^3\HX{\cL^{\otimes n}}$ par des tores bien choisis, et on applique le théorème~\ref{Thm:principal} sur chaque tore. \footnote{Cette stratégie de stratifier un espace vectoriel par des tores se trouve aussi dans~\cite[Corollary 0.10]{Fu}. La différence est que, d'une part, son article ne considère pas les cas équivariants, c'est-à-dire avec l'action d'un groupe comme notre $S_3$ ici. D'autre part, ses espaces vectoriels sont \og sans structures internes\fg{}. Les nôtres sont formés des sections globales des fibrés en droites sur une courbe, le choix d'une base bien adaptée n'est donc plus automatique.}
 Ainsi, il reste à montrer la condition de non-dégénérescence requise dans~\ref{Thm:principal}. Cette condition se trouve être hautement non-triviale dans notre situation. On la démontre finalement sous une forme plus générale en utilisant la \emph{stratification de Thom-Boardman} qui à un morphisme $\pi\colon\cX\to\cY$ entre des variétés lisses associe la stratification de $\cX$ par les strates\[\Sigma^r(\pi)\colonequals\{x\in\cX|\dim\diff{\pi}\T_x\cX=r\},\] où $r$ parcourt les entiers naturels. 

Voici le deuxième résultat en géométrie algébrique mentionné plus haut.

  \begin{Thm}[Théorème~\ref{Thm:ThomII}]
     Soit $X/k$ une courbe projective, lisse, et géométriquement irréductible. Soit $I$ un ensemble fini. Soit $\cL$ un fibré en droites sur $X$. Soient $(\cL_i)_{i\in I}$ des sous-faisceaux de $\cL$. Prenons la $k$-algèbre intègre $\prod_{n\geq 0}\HX{\cL^{\otimes n}}$ et notons par $\cK$ son corps de fraction. Pour tout $i\in I$, prenons un $k$-sous-espace vectoriel $V_i$ de $\cK$ qui est soit $k\oplus\HX{\cL_i}$, soit  $\HX{\cL_i}$. Alors le morphisme \og produit\fg{}
      \[m:\prod_{i\in I}\PP(V_i)\to\PP(\cK)\]
                                   satisfait 
\[\dim\Sigma^r(m)\leq r, \text{ pour tout } r\geq 0.\]
    \end{Thm}

  \textbf{Remerciements.} Cet article est la thèse de l'auteur, faite à l'Institut \'Elie Cartan de  l'Université de Lorraine. L'auteur remercie son directeur de thèse S. Lysenko pour son encouragement et pour de nombreuses discussions.

\sec{Stratification de Thom-Boardman}
Soit $k$ un corps algébriquement clos de caractéristique quelconque. Dans ce paragraphe, toutes les variétés sont définies sur~$k$, et un point d'une telle variété signifie un $k$-point. Pour un point $z$ sur une telle variété, on désigne par $\cO_z$ l'anneau local en $z$ et $\frm_z$ l'ideal maximal de $\cO_z$.

Soit $\phi:\cX\to\cY$ un morphisme entre des variétés lisses. Il induit, sur tout point $x$ de $\cX$, l'application tangente
\[\diff{\phi}:\T_x\cX\to\T_{\phi(x)}\cY.\]
Généralisant les travaux de Morse sur les \og singularités \fg{} de $\phi$, Thom, dans~\cite{Thom}, a associé à $\phi$ une stratification de $\cX$ par des sous-variétés 
\[\Sigma^r(\phi)\colonequals\{x\in\cX|\dim\diff{\phi}\T_x\cX=r\}, r\geq 0.\]
Cette stratification a ensuite été poursuivie par Boardman dans~\cite{Boardman}.

\ssec{Une condition sur les dimensions des strates}\label{ssec:Thomineq}
Il est dans notre intérêt de connaître les dimensions des $\Sigma^r(\phi)$. On a par exemple la formule \og produit des corangs \fg{} si $\phi$ est\og générique\fg{}, voir~\cite[Théorème 2]{Thom}. Attention néanmoins à la différence entre nos notations.

 On se demande en particulier si la condition suivante est vérifiée:
\begin{equation}\label{Eq:dimension}\dim\Sigma^r(\phi)\leq r, \text{ pour tout } r\geq 0.
\end{equation}
Elle est équivalente à: il existe une famille finie de variétés $\cX_i$ au-dessus de $\cX$ telles que les images des $\cX_i$ couvrent $\cX$ et pour tout $x$ dans l'image de $\cX_i$, on a
         \[\dim\diff{\phi}\T_x\cX\geq\dim\cX_i.\]
         Dans cet article, on montre toujours cette condition équivalente pour justifier~\eqref{Eq:dimension}.
         
       L'intérêt principal selon nous de la condition~\eqref{Eq:dimension} est le suivant:

       \begin{Lm}\label{Lm:seclisse}
             Soit $\cY,\cZ$ des variétés lisses. Soit $\cL$ un fibré en droites sur $\cZ$ tel que $\HHH^0(\cZ,\cL)$ est de dimension finie sur~$k$ et que pour tout point $z\in\cZ$, l'application canonique $\HHH^0(\cZ,\cL)\to\cL\otimes\cO_z/\frm_z^2$ est surjective.  Soit $\phi:\cY\to\cZ$ un morphisme vérifiant~\eqref{Eq:dimension}. Alors toute section globale $s\in\HHH^0(\cZ,\cL)$ générique définit un diviseur lisse sur $\cY$.
       \end{Lm}

       \begin{proof}
           Pour toute section $s\in\HHH^0(\cZ,\cL)$ et tout point $y\in\cY$, notons $\cS_y$ le noyau de l'application canonique $\HHH^0(\cZ,\cL)\to\cL\otimes\cO_y/\frm_y^2$. Alors, $s$ définit un diviseur lisse sur $\cY$ au voisinage de $y$ si et seulement si $s\notin\cS_y$. Ainsi, $s$ définit un diviseur lisse sur $\cY$ si et seulement si $s\notin\bigcup_{y\in\cY}\cS_y$.
           
           D'autre part, pour tout point $y\in\cY$, notons $z\colonequals\phi(y)$, alors les conditions suivantes sont équivalentes:\begin{itemize} \item $y\in\Sigma^r(\phi)$. \item Le noyau de l'application canonique $\cO_z/\frm_z^2\to\cO_y/\frm_y^2$ est de codimension $r+1$. \end{itemize}De plus, comme $\HHH^0(\cZ,\cL)\to\cL\otimes\cO_z/\frm_z^2$ est surjective, elles sont encore équivalentes à \begin{itemize} \item $\cS_y$ est de codimension $r+1$ dans $\HHH^0(\cZ,\cL)$. \end{itemize} Ainsi la codimension de $\bigcup_{y\in\Sigma^r(\phi)}\cS_y$ dans $\HHH^0(\cZ,\cL)$ est au moins $r+1-\dim\Sigma^r(\phi)$. Ayant supposé $\dim\Sigma^r(\phi)\leq r$, cette codimension est au moins~$1$. Il s'ensuit que la codimension de $\bigcup_{y\in\cY}\cS_y=\bigcup_r\bigcup_{y\in\Sigma^r(\phi)}\cS_y$  dans $\HHH^0(\cZ,\cL)$ est aussi au moins~$1$, ce qui termine la preuve.                                 
       \end{proof}

       On en utilisera plutôt le corollaire suivant:
       \begin{Lm} \label{Lm:duallisse}
              Soit $V$ un espace vectoriel. Soient $\cX, \cY$ des variétés lisses. Soit   \[\begin{array}{ccc} 
           \cX &\overset{\phi_1}{\to} & V-\{0\}\\
                      \pi\da &   & \da    \\
           \cY &\overset{\phi}{\to}   &\PP(V)
           \end{array}\] un diagramme commutatif où $\pi$ est lisse. Supposons $\phi$ satisfait~\eqref{Eq:dimension}, alors pour tout $l\in\dual{V}$ générique, la fonction $l\circ\phi_1$ définit un diviseur lisse sur $\cX$.                     
        \end{Lm}

        Si $V$ est de dimension infinie, on le considère comme l'ind-schéma $``\colim_U\text{''} U$ où $U$ parcourt les sous-espaces vectoriels de $V$ de dimensions finies. Similairement pour $V-\{0\}$ et $\PP(V)$.

        \begin{proof}
             Prenons $\cZ\colonequals\PP(V)$. La projection canonique $V-\{0\}\to\PP(V)$ est un $\Gm$-torseur sous l'action canonique de $\Gm$ sur $V-\{0\}$ par dilatation, et donc correspond à un fibré en droites sur $\cZ$. Prenons $\cL$ le fibré en droites dual. Alors, le diagramme commutatif dans l'énoncé n'est rien d'autre qu'une trivialisation du tiré en arrière de $\cL$ sur $\cX$. On a canoniquement $\HHH^0(\cZ,\cL)\ito\dual{V}$. Ce que l'on veut montrer devient alors: toute $s\in\HHH^0(\cZ,\cL)$ générique définit un diviseur lisse sur $\cX$. 
             
             Comme $\pi$ est lisse, il suffit de montrer que: toute $s\in\HHH^0(\cZ,\cL)$ générique définit un diviseur lisse sur $\cY$. Ceci découle du lemme~\ref{Lm:seclisse} appliqué au morphisme $\phi$, car pour tout $z\in\cZ$, l'application canonique $\HHH^0(\cZ,\cL)\to\cL\otimes\cO_z/\frm_z^2$ est bijective, donc est en particulier surjective.
        \end{proof}

         \ssec{Multiplication des espaces vectoriels \emph{dans} un corps}\label{Sec:corps}
         Pour tout espace vectoriel $V$ sur $k$, on considère $\PP(V)$ comme l'ind-schéma $``\colim_U\text{''} \PP(U)$ où $U$ parcourt les sous-$k$-espaces vectoriels de $V$ de dimensions finies. Alors, pour toute $k$-droite $L\subset V$, on a canoniquement \[\T_L\PP(V)\ito\Hom_k(L,V)/k.\]
         
         Soit $A$ une $k$-algèbre (commutative). Notons $\Unit{A}$ le groupe des éléments inversibles de $A$. Alors, les faisceaux en groupes pour la topologie \emph{fpqc}, $\Res_{A|k}\Gm$ et $\Res_{A|k}(\Gm)/\Gm$, sont naturellement des ind-schémas agissant sur l'ind-schéma $\PP(A)$. En effet, sur un $k$-schéma $\cS$, ces trois faisceaux classifient respectivement: \begin{itemize}\item  les sections globales sur $\cS$ du faisceau $\Unit{(A\otimes_k\cO_{\cS})}$. \item les classes d'isomorphisme des triplets $(\cL,s_1,s_2)$ où $\cL$ est un fibré en droites sur $\cS$, et $s$ (resp. $s'$) est une section globale sur $\cS$ du faisceau $A\otimes_k\cL$ (resp. $A\otimes_k\dual{\cL}$) telles que $s\otimes_A s'=1$ en tant que section globale du faisceau $A\otimes_k\cO_{\cS}$. \item les classes d'isomorphisme des couples $(\cM,s_0)$ où $\cM$ est un fibré en droites sur $\cS$, et $s_0$ est une section globale sur $\cS$ du faisceau $A\otimes_k\cM$ telle que \[\Hom_k(A,k)\otimes_k\cO_{\cS}\to\cM, e\otimes f\mapsto f\cdot\coupl{e\otimes\id}{s_0}\] est un épimorphisme. \end{itemize} Sous ces termes, l'action de $\Res_{A|k}(\Gm)/\Gm$ sur $\PP(A)$ est donnée par: \[(\cL,s_1,s_2)\cdot(\cM,s_0)\colonequals(\cL\otimes_{\cO_{\cS}}\cM, s_1\otimes_A s_0).\] Au niveau des $k$-points, ces actions deviennent les actions naturelles de $\Unit{A}$ et de $\Unit{A}/\Unit{k}$ sur $(A-\{0\})/\Unit{k}$ par multiplication. On remarque aussi que $\Res_{A|k}(\Gm)/\Gm$, en oubliant $s_2$, est naturellement un sous-faisceau de $\PP(A)$.

         Si $A$ est lui-même un corps $\cK$, alors, $\Unit{\cK}=\cK-\{0\}$. 
         \begin{Pp}\label{Pp:schfini} L'inclusion d'ind-schémas \begin{equation}\label{Eq:inclu}\Res_{\cK|k}(\Gm)/\Gm\subseteq\PP(\cK)\end{equation} induit des bijections au niveau de leurs valeurs sur tout $k$-schéma \emph{fini}. 
         
         De plus, l'action ci-dessus de $\Res_{\cK|k}(\Gm)/\Gm$ sur $\PP(\cK)$ devient \emph{transitive} et \emph{libre} au niveau de leurs valeurs sur tout $k$-schéma \emph{fini}.
         \end{Pp}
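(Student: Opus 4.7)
Le plan est de se ramener au cas où $\cS = \Spec R$ avec $R$ une $k$-algèbre locale artinienne. Comme tout $k$-schéma fini se décompose canoniquement en somme disjointe finie de tels $\Spec R$, et comme les deux membres de~\eqref{Eq:inclu} ainsi que l'action sont des faisceaux, cette réduction sera licite. Sur un tel $\cS$, on utilisera que le corps résiduel de $R$ vaut $k$ (puisque $k$ est algébriquement clos), que l'idéal maximal $\frm_R$ est nilpotent, et que tout fibré en droites sur $\cS$ est trivial.

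On réinterprétera ensuite les $\cS$-points des deux ind-schémas en termes purement algébriques. Pour $\Res_{\cK|k}(\Gm)/\Gm$, on obtiendra directement $\Unit{(\cK\otimes_k R)}/\Unit{R}$. Pour $\PP(\cK)$, la trivialité des fibrés en droites sur $\cS$ permettra d'identifier ses $\cS$-points avec $\{s_0 \in \cK \otimes_k R : \overline{s_0} \neq 0\} / \Unit{R}$, où $\overline{s_0}$ désigne l'image de $s_0$ dans $\cK \otimes_k R / (\cK \otimes_k \frm_R) = \cK$ (la condition de surjectivité dans la description de $\PP(\cK)$ se traduit précisément en cette non-annulation).

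Le point central de la preuve sera l'observation que $\cK \otimes_k R$ est un anneau \emph{local} de corps résiduel $\cK$ et d'idéal maximal nilpotent $\cK \otimes_k \frm_R$ (nilpotent car $\frm_R$ l'est, et $(\cK \otimes_k \frm_R)^n = \cK \otimes_k \frm_R^n$). C'est ici qu'intervient de manière cruciale l'hypothèse que $\cK$ est un corps, et non simplement une $k$-algèbre intègre. On en déduira qu'un élément de $\cK \otimes_k R$ a image non nulle dans $\cK$ si et seulement s'il n'est pas dans l'idéal maximal, si et seulement s'il est inversible, ce qui fournira la bijection cherchée.

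Enfin, pour la transitivité et la liberté de l'action, on vérifiera que sous l'identification précédente la formule $(\cL, s_1, s_2) \cdot (\cM, s_0) = (\cL\otimes\cM, s_1 \otimes s_0)$ se réduit à la multiplication à gauche du groupe $\Unit{(\cK \otimes_k R)}/\Unit{R}$ sur lui-même, d'où la conclusion est immédiate. L'étape la plus délicate reste donc l'identification de $\cK \otimes_k R$ comme anneau local de résiduel $\cK$; tout le reste en découle formellement.
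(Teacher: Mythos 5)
Le texte du papier ne donne en fait aucune démonstration de cette proposition; il l'énonce puis passe directement aux espaces tangents. Votre argument est correct et comble cette lacune de manière naturelle. Les points essentiels tiennent: la réduction aux $k$-algèbres locales artiniennes $R$ est licite parce que les deux foncteurs sont des faisceaux fpqc et qu'un schéma fini sur $k$ algébriquement clos se décompose en somme disjointe de tels $\Spec R$, à corps résiduel $k$; sur un tel $\Spec R$ les fibrés en droites sont triviaux, ce qui donne les deux descriptions $\Unit{(\cK\otimes_k R)}/\Unit{R}$ et $\{s_0: \bar{s_0}\neq 0\}/\Unit{R}$; et l'observation centrale — que $\cK\otimes_k R$ est local d'idéal maximal nilpotent $\cK\otimes_k\frm_R$ et de corps résiduel $\cK$ — est exactement là où l'hypothèse que $\cK$ est un corps intervient (elle serait fausse pour une $k$-algèbre intègre quelconque). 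Un élément de $\cK\otimes_k R$ est alors inversible si et seulement si son image dans $\cK$ est non nulle, ce qui établit la bijection, et la transitivité/liberté de l'action s'obtient comme vous le dites en la reconnaissant comme la multiplication à gauche du groupe $\Unit{(\cK\otimes_k R)}/\Unit{R}$ sur lui-même. Rien à signaler comme lacune.
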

          Ainsi les espaces tangents sur tous les $k$-points de $\PP(\cK)$ sont tous \emph{canoniquement} isomorphes à \[\Lie(\Res_{\cK|k}(\Gm)/\Gm)\ito\cK/k.\]          
         
          D'ailleurs, l'ensemble des $k$-sous-espaces vectoriels de $\cK$ forment un \emph{demi-anneau} sous l'addition usuelle et la multiplication donnée par: $V\cdot W$ est le $k$-sous-espace vectoriel de $\cK$ engendré par les $vw$ où $v\in V, w\in W$. Les éléments multiplicativement inversibles sont exactement les $k$-droites dans $\cK$. Ainsi pour tout $k$-sous-espace vectoriel $V\subset\cK$ et toute $k$-droite $L\subset\cK$, le quotient $\frac{V}{L}$ est bien défini est désigne l'unique sous $k$-space vectoriel de $\cK$ dont le produit avec $L$ donne $V$. Ceci induit une multiplication $m$ sur l'ind-schéma $\PP(\cK)$, compatible avec celle sur le groupe $\Res_{\cK|k}(\Gm)/\Gm$ et l'inclusion~\eqref{Eq:inclu}.

On a le diagramme commutatif suivant pour tout $k$-sous-espace vectoriel $V$ de $\cK$ et toute $k$-droite $L\subset V$: 
   \[\begin{array}{ccc} 
           \T_L\PP(V) & \hookrightarrow & \T_L\PP(\cK)\\
           \|                                            &                                                        & \|      \\
           \frac{V}{L}/k                                &\hookrightarrow           &\cK/k
           \end{array}
\]
En outre, soient $L_1,L_2\subset\cK$ des $k$-droites, et notons $L\colonequals L_1\cdot L_2\subset\cK$ le produit. Alors on a diagramme commutatif
\[\begin{array}{ccc} 
           \prod_{i=1}^2\T_{L_i}\PP(\cK) & \overset{\diff{m}}{\longrightarrow} & \T_L\PP(\cK)\\
           \|                                            &                                                        & \|      \\
           \prod_{i=1}^2(\cK/k)&\overset{+}{\longrightarrow}           &\cK/k
           \end{array}
\]
Pour le voir, il suffit, grâce à la proposition~\ref{Pp:schfini}, de le comparer avec le diagramme correspondant pour la multiplication sur le \emph{groupe} $\Res_{A|k}(\Gm)/\Gm$.

   Dans ce cadre, une question naturelle se pose:
 \begin{Q} Soit $I$ un ensemble fini. Soient $(V_i)_{i\in I}$ des $k$-sous-espaces vectoriels de dimension finie de $\cK$. Est-ce que le morphisme \og produit\fg{}
                         \[m:\prod_{i\in I}\PP(V_i)\to\PP(\cK)\]
                 satisfait~\eqref{Eq:dimension}?
 \end{Q}
 
On n'en traite ici que deux cas particuliers. La réponse est affirmative dans ces deux cas. 

 \sssec{Cas \textrm{I}}
\begin{Pp} \label{Pp:ThomI}
   Soit $E$ un espace affine sur $k$. Soit $\cK\colonequals k(E)$ le corps des fonctions rationnelles sur $E$. Soient $V_i\subset\cK$ des $k$-sous-espaces vectoriels de dimension finie, tous constitués d'applications affines sur $E$. Alors le morphisme \og produit\fg{}
                                   \[m:\prod_{i\in I}\PP(V_i)\to\PP(\cK)\]
                                   satisfait~\eqref{Eq:dimension}.
\end{Pp}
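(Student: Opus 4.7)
Mon plan est d'utiliser la reformulation de~\eqref{Eq:dimension} évoquée dans~\ref{ssec:Thomineq}: je stratifierai $\prod_{i\in I}\PP(V_i)$ par des sous-variétés localement fermées $S_\Pi$ indexées par les partitions $\Pi=(I_1,\ldots,I_s)$ de $I$, où $S_\Pi$ est le lieu des $(L_i)$ tels que $L_i=L_j$ si et seulement si $i,j$ appartiennent à la même classe de $\Pi$. Cette famille finie recouvre manifestement $\prod_i\PP(V_i)$, et il s'agira de vérifier que sur chaque $S_\Pi$, le rang de $\diff{m}$ est au moins $\dim S_\Pi$.

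Les identifications de la section~\ref{Sec:corps} et la dérivation logarithmique identifient, au point $(L_i=kf_i)$, le différentiel $\diff{m}\colon\prod_i(V_i/L_i)\to\cK/k$ à l'application $(v_i)_i\mapsto\sum_iv_i/f_i$ modulo les constantes. Pour $\vec f\in S_\Pi$, on choisit un générateur commun $g_a$ de $L_{I_a}$; l'inclusion $L_{I_a}\subset\bigcap_{i\in I_a}V_i$ identifie alors $S_\Pi$ à un ouvert de $\prod_a\PP\bigl(\bigcap_{i\in I_a}V_i\bigr)$, de dimension $\sum_a(\dim\bigcap_{i\in I_a}V_i-1)$, tandis que l'image du différentiel devient $\sum_aW_a/g_a$ modulo $k$, où $W_a\colonequals\sum_{i\in I_a}V_i$.

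L'ingrédient principal sera le lemme suivant, spécifique au caractère affine des $V_i$: pour tout $a$, on a $W_a/g_a\cap\sum_{b\neq a}W_b/g_b=k$. On l'obtiendra en écrivant une telle relation sous la forme $\sum_c\epsilon_c h_c/g_c=0$ avec $h_c\in W_c$ affines, puis, après multiplication par $\prod_c g_c$, en analysant l'équation polynomiale résultante dans $k[E]$: les $g_c$ étant des fonctions affines deux à deux non proportionnelles, $g_a$ est premier à chaque $g_b$ pour $b\neq a$ et divise donc $h_a$; si $g_a$ est de degré~$1$, ceci force $h_a=cg_a$ pour un $c\in k$, tandis que si $g_a$ est constant, une comparaison des degrés dans l'équation force $h_a$ à être lui-même constant. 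Dans les deux cas, $h_a/g_a\in k$. On en déduira $\dim\sum_aW_a/g_a=\sum_a\dim W_a-(s-1)$, et donc le rang de $\diff{m}$ sur $S_\Pi$ vaut $\sum_a\dim W_a-s$.

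La conclusion viendra par la majoration élémentaire $\dim\bigcap_{i\in I_a}V_i\leq\dim W_a$, donnant $\dim S_\Pi\leq$ rang de $\diff{m}$ sur $S_\Pi$. Le principal obstacle anticipé est bien le lemme d'intersection: sa validité repose crucialement sur le fait que les éléments des $V_i$ sont de degré~$\leq 1$ sur $E$; sans cette hypothèse (par exemple avec des fonctions quadratiques), l'intersection $W_a/g_a\cap W_b/g_b$ peut être strictement plus grande que $k$, et la stratification choisie ne suffirait plus.
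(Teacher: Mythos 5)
Your proof is correct and follows the paper's strategy: stratify $\prod_i\PP(V_i)$ by the coincidence pattern of the $L_i$, compute the dimension of each stratum, and compare with the rank of $\diff{m}$ via the tangent-space formula of $\S$\ref{Sec:corps} and a key lemma on sums of quotients of affine functions. The implementation differs in three ways worth noting. The paper's stratification records separately (via a marked element $*\in J_0$) which lines $L_i$ equal $k$, so that in its key lemma all the denominators $l_j$ can be assumed non-constant; your partitions do not track this, and your lemma must therefore handle the case of one constant $g_a$, which you dispose of correctly by a degree count. The paper bounds the rank of $\diff{m}$ from below by the dimension of the subspace $\sum_{j\in J}V_j/L_j$ with $V_j=\cap_{\pi(i)=j}V_i$, which equals $\dim\cX_\pi$ on the nose; you instead compute the rank exactly using the full image $\sum_a W_a/g_a$ with $W_a=\sum_{i\in I_a}V_i$ and then conclude via $\cap_{i\in I_a}V_i\subseteq W_a$ — a slightly less economical but equally valid path. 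Finally, the paper proves its lemma by observing that $\sum_j v_j/l_j$ acquires a pole along each hyperplane $\{l_j=0\}$, whereas you clear denominators and invoke unique factorisation in $k[E]$ together with a degree comparison; these are the same argument in substance. One cosmetic glitch: as stated, $W_a/g_a\cap\sum_{b\neq a}W_b/g_b=k$ fails when $s=1$ (the right-hand side is then $\{0\}$), though the final inequality is unaffected.
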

\begin{proof}
Considérons les classes d'isomorphisme de la donnée de \begin{itemize}\item un ensemble $J_0$ avec un élément marqué $*\in J_0$;\item une application $\pi: I\to J_0$\end{itemize} sujette aux conditions suivantes:\begin{itemize}\item $k\subset V_i$ si $\pi(i)=*$;\item notons $J\colonequals J_0-\{*\}$, alors aucun des $V_j\colonequals\cap_{i,\pi(i)=j}V_i$ pour $j\in J$ n'est contenu dans $k$, et que parmi eux, ceux qui sont de dimension 1 sont deux-à-deux différents. \end{itemize} 

Pour une telle classe, prenons la variété $\cX_{J_0, *, \pi}$, notée plus simplement par $\cX_{\pi}$,  des  $(L_i) \in\prod_{i\in I}\PP(V_i)$ satisfaisant \begin{itemize}\item $L_i=k$ si et seulement si $\pi(i)=*$; \item $L_i=L_{i'}$ si et seulement si $\pi(i)=\pi(i')$.\end{itemize} 
  Ces variétés, non-vides, forment une stratification de $\prod_{i\in I}\PP(V_i)$. En outre, on a une immersion ouverte \[\cX_{\pi}\subset\prod_{j\in J} \PP(V_j).\]  Il s'ensuit que \[\dim\cX_{\pi}=\sum_{j\in J}\dim\PP(V_j).\] On notera $(L_j)\in\prod_{j\in J} \PP(V_j)$ l'image de $(L_i)$ sous cette immersion.
  
  Pour conclure, il suffit de montrer que pour tout $(L_i)\in\cX_{\pi}$, on a \[\dim\diff{m}\T_{(L_i)}\prod_{i\in I}\PP(V_i)\geq\dim\cX_{\pi}.\] Suivant le formalisme du paragraphe~\ref{Sec:corps}, on a \[\diff{m}\T_{(L_i)}\prod_{i\in I}\PP(V_i)=(\sum_i\frac{V_i}{L_i})/k=(\sum_{i,\pi(i)=*}V_i+\sum_{j\in J}\frac{\sum_{i,\pi(i)=j}V_i}{L_j})/k.\]
Ainsi, \[\diff{m}\T_{(L_i)}\prod_{i\in I}\PP(V_i)\supset(\sum_{j\in J}\frac{V_j}{L_j})/k.\]
Il reste à montrer que \[\dim ((\sum_{j\in J}\frac{V_j}{L_j})/k))\geq\sum_{j\in J}\dim\PP(V_j),\]
lequel découle du lemme suivant.
\end{proof}                                   
\begin{Lm}
    Soit $E$ un espace affine sur $k$. Soient  $(l_j)_j, (v_j)_j$ des applications affines sur $E$, où $j$ parcourt un ensemble fini non-vide $J$. Supposons les $l_j$ deux-à-deux non-proportionnelles et qu'aucune des $l_j$ et des $\frac{v_j}{l_j}$ n'est constante. Alors $\sum_j\frac{v_j}{l_j}$ n'est pas constante non plus.
\end{Lm}
\begin{proof}
    C'est clair car la fonction $\sum_j\frac{v_j}{l_j}$ sur $E$ a des singularités sur tous les hyperplans définis par $l_j=0$.
\end{proof}

\sssec{Cas \textrm{II}}
  Pour donner un aperçu de ce dont il s'agit, on l'énonce d'abord dans une situation légèrement plus simple.
   \begin{Thm}\label{Pp:simple}
         Soit $X/k$ une courbe projective, lisse, et géométriquement irréductible. Soit $\cK\colonequals k(X)$ le corps des fonctions rationnellles sur $X$. Soit $D_i\hookrightarrow X$ des diviseurs, et \[V_i\colonequals\{f\in\cK|\mathrm{div}(f)\geq -D_i\}\subset\cK.\] Alors le morphisme \og produit\fg{}
                                   \[m:\prod_{i\in I}\PP(V_i)\to\PP(\cK)\]
                                   satisfait~\eqref{Eq:dimension}.
 \end{Thm}
En fait, plus généralement, 
 \begin{Thm}\label{Thm:ThomII}
     Soit $X/k$ une courbe projective, lisse, et géométriquement irréductible. Soit $I$ un ensemble fini. Soit $\cL$ un fibré en droites sur $X$. Soient $(\cL_i)_{i\in I}$ des sous-faisceaux de $\cL$. Prenons la $k$-algèbre intègre $\prod_{n\geq 0}\HX{\cL^{\otimes n}}$ et notons par $\cK$ son corps de fraction. Pour tout $i\in I$, prenons un $k$-sous-espace vectoriel $V_i$ de $\cK$ qui est soit $k\oplus\HX{\cL_i}$, soit  $\HX{\cL_i}$. Alors le morphisme \og produit\fg{}
      \[m:\prod_{i\in I}\PP(V_i)\to\PP(\cK)\]
                                   satisfait~\eqref{Eq:dimension}.
    \end{Thm}
    Pour retrouver la proposition~\ref{Pp:simple}, prenons, pour tout $i$, $\cL_i=\cO(D_i)$ pour un diviseur $D_i\hookrightarrow X$ et $V_i\colonequals\HX{\cL_i}$. Alors, le corps $\cK$ défini ici sera différent de celui pris dans la proposition~\ref{Pp:simple}. Cette différence n'a pas d'impact sur l'énoncé, et n'est en fait nécessaire que si certain $V_i=k\oplus\HX{\cL_i}$. 
    \begin{proof}[Démonstration de~\ref{Thm:ThomII}]
     On peut supposer que les $\cL_i$ sont tous non-nuls. Ils sont alors aussi des fibrés en droites.
     
        Considérons les classes d'isomorphisme de la donnée de \begin{itemize}\item un ensemble $J$ muni d'une une partition $\alpha: J=J_1\sqcup J_2$;\item une application surjective $\pi:I\to J$;\item un entier naturel $m$\end{itemize} sujette aux conditions suivantes:     
         \begin{itemize}\item $\pi(i)\in J_2$ si $V_i=\HX{\cL_i}$;\item l'application restreinte $\pi^{-1}(J_2)\overset{\pi}{\to} J_2$ est bijective.\end{itemize} 
        
        Pour une telle classe, notons $\cL_j\colonequals\cap_{i,\pi(i)=j}\cL_i$ pour tout $j\in J$, et prenons la variété $\cX_{J,\alpha,\pi,m}$, notée plus simplement $\cX_{\pi,m}$, qui classifie la donnée de\begin{itemize}\item pour tout $j\in J_2$, une suite décroissante des fibrés en droites sur $X$ \[\cL_j=\cL_{j,0}\supseteq \cL_{j,1}\supseteq\cdots\supseteq \cL_{j,m};\]\item pour tout $j\in J_1$, une section $s_j\in\HX{\cL_j}$\end{itemize} sujette aux conditions suivantes:
        \begin{itemize}\item les $(s_j)_{j\in J_1}$ sont deux-à-deux différentes;\item pour tout $n\in\{1,\ldots,m\}$, au moins un $j\in J_2$ est tel que \[\HX{\cL_{j,n-1}}\neq\HX{\cL_{j,n}};\]\item pour tout $n\in\{1,\ldots,m\}$, il existe un point $x_n\in X$, nécessairement unique, tel que pour tout $j\in J_2$, l'inclusion $\cL_{j,n-1}\subset \cL_{j,n}$ est un isomorphisme en dehors de $x_n$;\item les $(x_n)$ sont deux-à-deux différents; \item pour tout $j\in J_2$, $\HX{\cL_{j,m}}$ est de dimension 1, et de plus, si $\cO_X\subset \cL_{j,m}$ représente un élément non-nul dedans, alors pour tout $n\in\{1,\ldots,m\}$, l'inclusion induite $\cO_X\subset \cL_{j,n}$ est un isomorphisme en $x_n$.\end{itemize}
        
        Remarquons tout de suite que $m$ ne peut pas être trop grand pour que cette variété soit non-vide, ainsi seul un nombre fini de ses variétés sont non-vides. D'autre part, comme $s_j\in\HX{\cL_j}$ pour tout $j\in J_1$ et que $(x_1,\ldots, x_m)\in X^m$, on a un morphisme naturel \[\cX_{\pi,m}\to\prod_{j\in J_1}\HX{L_j}\times X^m.\] Ce morphisme est \emph{quasi-fini}, ce qui donne l'estimation \[\dim\cX_{\pi,m}\leq\sum_{j\in J_1}\dim\HX{\cL_j}+m.\]
        
        D'autre côté, on a un morphisme naturel \[\cX_{\pi,m}\to\prod_{i\in I}\PP(V_i)\] envoyant une donnée comme ci-dessus à la donnée, pour tout $i\in I$, de la droite $L_i\subset V_i$ définie, en notant $j\colonequals\pi(i)$, comme $k\cdot(1+s_j)$ si $j\in J_1$, et comme $\HX{\cL_{j,m}}$ si $j\in J_2$. On observe que: 
        
        {\centering \emph{les images de ces morphismes couvrent $\prod_{i\in I}\PP(V_i)$.}
        
        }
         Ainsi, pour conclure, il suffit de montrer que  \[\dim\diff{m}\T_{(L_i)}\prod_{i\in I}\PP(V_i)\geq\dim\cX_{\pi,m}.\] Montrons, plus fortement, que \[\dim\diff{m}\T_{(L_i)}\prod_{i\in I}\PP(V_i)\geq\sum_{j\in J_1}\dim\HX{\cL_j}+m.\]  Suivant le formalisme du paragraphe~\ref{Sec:corps}, on a \[\diff{m}\T_{(L_i)}\prod_{i\in I}\PP(V_i)=(\sum_i\frac{V_i}{L_i})/k,\] et donc est égal à \[(\sum_{j\in J_1}\frac{\sum_{i,\pi(i)=j}V_i}{1+s_j}+\sum_{j\in J_2}\frac{\sum_{i,\pi(i)=j}V_i}{L_j})/k.\]
        Il s'ensuit que $\diff{m}\T_{(L_i)}\prod_{i\in I}\PP(V_i)$ contient les deux sous-espaces vectoriels\begin{itemize}\item $\sum_{j\in J_1}\frac{\HX{\cL_j}}{1+s_j}$;\item $(\sum_{j\in J_2}\frac{\HX{\cL_j}}{L_j})/k$.\end{itemize} Pour conclure, on est réduit à montrer les faits suivants\begin{itemize}\item a) L'intersection de ces deux sous-espaces est 0. \item b) La somme $\sum_{j\in J_1}\frac{\HX{\cL_j}}{1+s_j}$ est une somme directe.\item c) $\dim((\sum_{j\in J_2}\frac{\HX{\cL_j}}{L_j})/k)\geq m$. \end{itemize}
        
        Voici les démonstrations de ces faits.
        
        Pour a). Remarquons d'abord que l'anneau $\prod_{n\geq 0}\HX{\cL^{\otimes n}}$ est le compété de $\oplus_{n\geq 0}\HX{\cL^{\otimes n}}$ qui est un anneau $\NN$-gradué. En utilisant le développement dans $\cK$: \[\frac{1}{1+s_j}=1-s_j+s_j^2-\cdots,\] on voit que tout élément dans $\frac{\HX{\cL_j}}{1+s_j}$ est la somme (infinie) des éléments de degré strictement positif. Par contre, chaque $\frac{\HX{\cL_j}}{L_j}$ est composé d'éléments de degré 0. 
        
        Pour b). Pour tout $j\in J_1$, soit $v_j\in\HX{\cL_j}$. On a \[\sum_{j\in J_1}\frac{v_j}{1+s_j}=\sum_{n\geq 0}(-1)^n\sum_{j\in J_1}v_js_j^n.\] Donc $\sum_{j\in J_1}\frac{v_j}{1+s_j}=0$ si et seulement si pour tout $n\geq 0$, \[\sum_{j\in J_1}v_js_j^n=0.\] On conclut en remarquant que la matrice \[(s_j^n)_{j\in J_1, 0\leq n<|J_1|}\] est inversible, les $(s_j)_{j\in J_1}$ étant deux-à-deux différentes.   
        
         Pour c). C'est la partie la plus essentielle. Soit $n\in\{1,\ldots,m\}$. Alors, pour tout $j\in J_2$, tout élément de $\frac{\HX{\cL_{j,n-1}}}{L_j}$ est \emph{régulier} sur $x_1,\ldots, x_{n-1}$; D'autre part, d'après la définition de $\cX_{\pi,m}$, on peut prendre un $j(n)\in J_2$ tel que $\HX{\cL_{j(n),n-1}}\neq\HX{\cL_{j(n),n}}$, alors, un élément de $\frac{\HX{\cL_{j(n),n-1}}}{L_{j(n)}}$ est régulier sur $x_n$ si et seulement s'il est dans $\frac{\HX{\cL_{j(n),n}}}{L_{j(n)}}$. 
         
         Donc pour $n=1,\ldots, m,$ et pour tout $f_n$ dans l'ensemble non-vide \[\frac{\HX{\cL_{j(n),n-1}}}{L_{j(n)}}\setminus\frac{\HX{\cL_{j(n),n}}}{L_{j(n)}},\] on aura $f_1,\ldots, f_m$ et $1$ sont $k$-linéairement indépendants.
    \end{proof}

\sec{Caractéristique d'Euler-Poincaré équivariante}\label{Sec:Euler}
\ssec{Complexe de de Rham logarithmique}\label{Sec:de_Rham}
  Soit $F$ un corps algébriquement clos de caractéristique 0.    
  
   Soit $\cP$ une variété lisse sur $F$. Soit $\cX\hto\cP$ une hypersurface fermée et lisse. Soit $D\hto\cP$ un diviseur à croisements normaux stricts, tel que $D\cap\cX\hto\cX$ est un diviseur à croisements normaux stricts de $\cX$. Notons $j:\cXo\to\cX$ l'ouvert complémentaire au diviseur $D\cap\cX$. 
      
   Notons par $\Aut(\cP,\cX, D)$ le groupe des automorphismes de $\cP$ préservant respectivement $\cX$ et $D$. On va exprimer la cohomologie de $\cXo$, ou plutôt son image dans le $\rmK$-\emph{groupe de Grothendieck} des $F[\Aut(\cP,\cX,D)]$-modules, en termes des cohomologies des faisceaux des formes différentielles sur $\cP$ ayant des singularités \og contrôlées\fg{} le long de $X$ et de $D$. On note ce $\rmK$-groupe $\rmK(F[\Aut(\cP,\cX,D)]-\mathrm{mod})$.
   
   Par \og forme différentielle sur $\cP$ à singularités \emph{logarithmiques} le long de $D$\fg{}, on entend une forme différentielle $\omega$ sur l'ouvert $\cP-D$ vérifiant: $\omega$ et $\rmd\omega$ ont tous les deux des pôles d'ordre au plus 1 le long de $D$. Prenons $\Omega_{\cP}^\bullet(\log D)$ le complexe des faisceaux cohérents de telles formes différentielles sur $\cP$ à singularités \emph{logarithmiques} le long de $D$. Similairement, on prend le complexe des faisceaux $\Omega_{\cX}^\bullet(\log D\cap\cX)$ sur $\cX$. Prenons $\cL\colonequals\cO(X)$ le fibré en droites sur $\cP$. Notons, pour tout $n,m\in\ZZ$, \[\Omega^{n,m}\colonequals\Omega_{\cP}^m(\log D)\otimes\cL^{\otimes n}.\] On a des inclusions \[\cdots\subseteq\Omega^{n-1,m}\subseteq\Omega^{n,m}\subseteq\cdots.\]

   \begin{Lm}
      On a canoniquement des suites exactes longues \[\cdots\to\Omega^{n,m}/\Omega^{n-1,m}\to\Omega^{n+1,m+1}/\Omega^{n,m+1}\to\cdots.\]
   \end{Lm}

   \begin{proof}
      Localement sur $\cP$, l'idéal définissant $\cX$ est principal, donc est engendré par une fonction $f$. Les flèches dans les suites en question sont données par \[\cdot\wedge\frac{\rmd f}{f}.\] Ces flèches ainsi définies sont indépendantes du choix de $f$. L'exactitude de ces suites est claire.
   \end{proof}

   Ayant en même temps les suites exactes \[\Omega^{-1,m-1}/\Omega^{-2,m-1}\to\Omega^{0,m}/\Omega^{-1,m}\to\Omega^m_{\cX}(\log D\cap\cX)\to 0,\] il s'ensuit des suites exactes longues 
   \begin{equation}\label{Eq:Omega_resolution}
         0\to\Omega^m_{\cX}(\log D\cap\cX)\to\Omega^{1,m+1}/\Omega^{0,m+1}\to\Omega^{2,m+2}/\Omega^{1,m+2}\to\cdots.\end{equation}

   On cite aussi le lemme suivant, dont la version analytique est le célèbre lemme d'Atiyah-Hodge.
   \begin{Lm}\label{qiso}
    L'inclusion \[\Omega^\bullet_{\cX}(\log D\cap\cX)\to j_*\Omega_{\cXo}^\bullet\] est un quasi-isomorphisme.
   \end{Lm}

   \begin{proof}
      L'assertion étant locale, on peut supposer $\cX$ affine. Ensuite, un dévissage standard nous ramène au cas où $F=\CC$ est le corps des nombres complexes. Dans ce cas, ce lemme est conséquence des théorèmes 4.4 et 4.6 de~\cite{Atiyah}. 
   \end{proof}

   \begin{Pp}\label{Pp:cohomology_normal_crossing}
      La cohomologie de de Rham algébrique $\Hdr^\bullet(\cXo)$ est égale à \[\Hdr^\bullet(\cP-D)-\sum_m(-1)^m\HHH^\bullet(\cP,\Omega^{m,m})\] dans $\rmK(F[\Aut(\cP,\cX,D)]-\mathrm{mod})$.
   \end{Pp}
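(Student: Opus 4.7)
Le plan est de combiner le lemme~\ref{qiso} avec la résolution~\eqref{Eq:Omega_resolution}, puis d'effectuer un simple réarrangement d'indices dans le $\rmK$-groupe. D'abord, le lemme~\ref{qiso} donne une identification
\[\Hdr^\bullet(\cXo)=\HHH^\bullet(\cX,\Omega^\bullet_{\cX}(\log D\cap\cX))=\sum_m(-1)^m\HHH^\bullet(\cX,\Omega^m_\cX(\log D\cap\cX))\]
dans $\rmK(F[\Aut(\cP,\cX,D)]-\mathrm{mod})$; toutes les constructions étant canoniques, l'équivariance sous $\Aut(\cP,\cX,D)$ est automatique.

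Ensuite, j'appliquerais la résolution~\eqref{Eq:Omega_resolution}, poussée sur $\cP$, qui exprime chaque $\HHH^\bullet(\cX,\Omega^m_\cX(\log D\cap\cX))$ comme la somme alternée
\[\sum_{k\geq 1}(-1)^{k-1}\HHH^\bullet(\cP,\Omega^{k,m+k}/\Omega^{k-1,m+k}).\]
Les suites exactes courtes $0\to\Omega^{k-1,n}\to\Omega^{k,n}\to\Omega^{k,n}/\Omega^{k-1,n}\to 0$ donnent alors $\HHH^\bullet(\cP,\Omega^{k,n}/\Omega^{k-1,n})=\HHH^\bullet(\cP,\Omega^{k,n})-\HHH^\bullet(\cP,\Omega^{k-1,n})$ dans le $\rmK$-groupe.

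La partie technique, quoique élémentaire, est le réarrangement de la somme double qui en résulte. En posant $n=m+k$ et en regroupant, les contributions aux indices intermédiaires $1\leq k\leq n-1$ s'annulent par télescopage, ne laissant que les termes aux bornes: pour $k=n$ le terme $\HHH^\bullet(\cP,\Omega^{n,n})$, et pour $k-1=0$ le terme $\HHH^\bullet(\cP,\Omega^{0,n})=\HHH^\bullet(\cP,\Omega^n_\cP(\log D))$. Après rassemblement des signes, on obtient
\[\Hdr^\bullet(\cXo)=\sum_n(-1)^n\HHH^\bullet(\cP,\Omega^n_\cP(\log D))-\sum_n(-1)^n\HHH^\bullet(\cP,\Omega^{n,n}).\]
Pour conclure, je reconnaîtrais dans le premier terme la cohomologie de de Rham algébrique $\Hdr^\bullet(\cP-D)$, via l'identification classique de Deligne de $\Omega^\bullet_\cP(\log D)$ avec $\RR j'_*\Omega^\bullet_{\cP-D}$, où $j'\colon\cP-D\hookrightarrow\cP$. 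L'unique difficulté véritable est ainsi le réarrangement des indices; tout le reste est formel à partir des lemmes préalables.
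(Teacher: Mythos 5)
Votre démonstration est correcte et suit essentiellement le même chemin que l'article : lemme~\ref{qiso} pour ramener $\Hdr^\bullet(\cXo)$ à l'hypercohomologie du complexe logarithmique sur $\cX$, puis la résolution~\eqref{Eq:Omega_resolution} et un réarrangement des indices dans le $\rmK$-groupe. La seule différence, purement cosmétique, est que vous effectuez ce réarrangement via un télescopage explicite en $k$ à $n=m+k$ fixé, alors que l'article manipule directement les sommes doubles $\sum_{m>n\geq 0}$ et $\sum_{m\geq n>0}$ : c'est le même calcul présenté autrement.
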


   \begin{proof}   
      Calculons dans le $\rmK$-groupe $\rmK(F[\Aut(\cP,\cX,D)]-\mathrm{mod})$.
       
          La cohomologie de de Rham algébrique  $\Hdr^\bullet(\cXo)$ est définie comme l'\emph{hypercohomologie} \[\HH^\bullet(\cXo,\Omega_{\cXo}^\bullet)\] du complexe $\Omega_{\cXo}^\bullet$ sur $\cXo$. Comme $j$ est affine, le foncteur \emph{image directe} $j_*$ sur les faisceaux abéliens est exact lorsqu'on le restreint aux faisceaux quasi-cohérents. En particulier, on a un quasi-isomorphisme \[j_*\Omega_{\cXo}^\bullet=\rmR j_*\Omega_{\cXo}^\bullet.\] Ainsi, l'hypercohomologie ci-dessus est isomorphe à 
              \[\HH^\bullet(\cX, j_*\Omega_{\cXo}^\bullet).\] Compte tenu du lemme~\ref{qiso}, on a \[\Hdr^\bullet(\cXo)\ito\HH^\bullet(\cX,\Omega^\bullet_{\cX}(\log D\cap\cX)).\]
        
        D'autre part, par~\eqref{Eq:Omega_resolution}, on a pour tout $j\in\ZZ$, \begin{multline}\label{Eq:Omega_resolution_cohomology}(-1)^j\HHH^\bullet(\cX,\Omega^j_{\cX}(\log D\cap\cX))=\sum_{n\geq 0, m=n+j+1}(-1)^m\HHH^\bullet(\cP,\Omega^{n,m})\\-\sum_{n>0, m=n+j}(-1)^m\HHH^\bullet(\cP,\Omega^{n,m}).\end{multline}      
        Sommons par rapport aux $j\geq 0$, on obtient
         \begin{align*}\Hdr^\bullet(\cXo)&=\sum_{m>n\geq 0}(-1)^m\HHH^\bullet(\cP,\Omega^{n,m})-\sum_{m\geq n>0}(-1)^m\HHH^\bullet(\cP,\Omega^{n,m})\\&=\sum_m(-1)^m\HHH^\bullet(\cP,\Omega^{0,m})-\sum_m(-1)^m\HHH^\bullet(\cP,\Omega^{m,m})\\&=\Hdr^\bullet(\cP-D)-\sum_m(-1)^m\HHH^\bullet(\cP,\Omega^{m,m}). \end{align*}       
   \end{proof}
 
   \ssec{Théorie d'Ehrhart équivariante}
   
   Reprenons les notations du paragraphe~\ref{Sec:de_Rham}. Soit $\frG\hto\Aut(\cP,\cX,D)$ un sous-groupe fixe.

    \begin{Lm}\label{Lm:independance_difference} Les sommes \[\sum_{n,m\geq 0,n-m=i}(-1)^m\HHH^\bullet(\cP,\Omega^{n,m}) \] pour $i\geq 0$ sont toutes égales.\end{Lm}

    \begin{proof}    Il suffit de prendre $j=-1,-2,\ldots$ dans~\eqref{Eq:Omega_resolution_cohomology}.
   \end{proof}

   Pour faire le lien avec la \emph{théorie d'Ehrhart}, il nous faut l'hypothèse:
   \begin{Hyp}\label{Hyp:Omega_trivial}
          \hfill
    \begin{enumerate} 
    \item \label{Item:Omega_trivial_functions}L'inclusion canonique $F\hto\HHH^0(\cP,\cO_{\cP})$ est un isomorphisme.
    \item  \label{Item:Omega_trivial_trivialisable}Le fibré vectoriel $\Omega^1_{\cP}(\log D)$ sur $\cP$ est trivialisable.
    \item \label{Item:Omega_trivial_duality} En tant que $F[\frG]$-module, $\HHH^0(\cP,\Omega^1_{\cP}(\log D))$ est isomorphe à son dual. On le note par $\rho$.
    \end{enumerate}
    \end{Hyp}

    \begin{Pp}\label{Pp:Hilbert}
              Sous l'hypothèse~\ref{Hyp:Omega_trivial}, il existe un (unique) polynôme $\phi(s)$ à coefficients dans le $\rmK$-groupe $\rmK(F[\frG]-\mathrm{mod})$, tel que la \emph{série de Hilbert-Poincaré} \[\sum_{n\geq 0}\HHH^\bullet(\cP,\cL^{\otimes n})s^n=\frac{\phi(s)}{(1-s)\det(1-s\rho)}\]
    \end{Pp}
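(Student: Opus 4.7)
Le plan est d'abord d'utiliser l'hypothèse de trivialité pour ramener tous les calculs cohomologiques à un calcul combinatoire dans le $\rmK$-groupe. Comme $\HHH^0(\cP, \cO_{\cP}) = F$ et que $\Omega^1_{\cP}(\log D)$ est trivialisable, l'application d'évaluation fournit un isomorphisme $\frG$-équivariant de $\cO_{\cP}$-modules $\rho \otimes_F \cO_{\cP} \ito \Omega^1_{\cP}(\log D)$. En prenant les puissances extérieures et en tordant par $\cL^{\otimes n}$, on en déduit $\Omega^{n,m} \cong \wedge^m\rho \otimes_F \cL^{\otimes n}$ de manière $\frG$-équivariante, d'où, dans $\rmK(F[\frG]-\mathrm{mod})$, la factorisation \[\HHH^\bullet(\cP, \Omega^{n,m}) = \wedge^m \rho \cdot P_n, \quad \text{où } P_n \colonequals \HHH^\bullet(\cP, \cL^{\otimes n}).\] Posons également $r \colonequals \dim \cP$, qui est aussi le rang du $F[\frG]$-module $\rho$, et introduisons la série génératrice $P(s) \colonequals \sum_{n\geq 0} P_n s^n$.

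L'étape suivante consiste à extraire un polynôme à partir de $P(s)$. Considérons pour cela \[\tilde Q(s) \colonequals \sum_{m=0}^{r}(-1)^m s^{r-m}\wedge^m\rho \in\rmK(F[\frG]-\mathrm{mod})[s].\] Un calcul direct montre que, pour $k\geq r$, le coefficient de $s^k$ dans la série $P(s)\tilde Q(s)$ vaut \[\sum_{m=0}^{r}(-1)^m \wedge^m\rho \cdot P_{k-r+m} = S_{k-r},\] où l'on a posé $S_i \colonequals \sum_{m\geq 0}(-1)^m \wedge^m\rho \cdot P_{i+m}$, quantité correspondant exactement à la somme apparaissant dans le Lemme~\ref{Lm:independance_difference}. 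D'après ce lemme, $S_i$ est indépendant de $i\geq 0$: notant $T$ cette valeur commune, on obtient $P(s)\tilde Q(s) = R(s) + T s^r/(1-s)$ avec $R(s)$ polynôme de degré~$<r$ à coefficients dans le $\rmK$-groupe. En multipliant par $(1-s)$, il s'ensuit que $(1-s)P(s)\tilde Q(s)$ est un polynôme en~$s$.

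Le point que j'anticipe comme le plus délicat est de remplacer $\tilde Q(s)$ par $\det(1-s\rho)$, afin d'obtenir le dénominateur prescrit par l'énoncé. Pour cela, je combinerais l'isomorphisme \og de type Hodge\fg{} $\wedge^m\rho^\vee \cong \wedge^{r-m}\rho\otimes(\det\rho)^{-1}$ (valable pour toute représentation de rang~$r$) avec l'autodualité $\rho\cong\rho^\vee$ incluse dans l'hypothèse~\ref{Hyp:Omega_trivial}. Un calcul direct basé sur $\det(1-s\rho) = \sum_m (-s)^m\wedge^m\rho$ donne alors \[\det(1-s\rho) = \det(1-s\rho^\vee) = (-1)^r (\det\rho)^{-1} \tilde Q(s)\] dans le $\rmK$-groupe. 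Comme $\det\rho$ est une représentation de dimension $1$, donc un élément inversible de $\rmK(F[\frG]-\mathrm{mod})$, on conclut que \[\phi(s) \colonequals (1-s)\det(1-s\rho)\cdot P(s) = (-1)^r (\det\rho)^{-1}\cdot(1-s)\tilde Q(s) P(s)\] est un polynôme en $s$ de degré $\leq r$, ce qui fournit l'expression voulue. L'unicité est immédiate puisque le terme constant de $(1-s)\det(1-s\rho)$ vaut $1$, donc cet élément est inversible dans l'anneau $\rmK(F[\frG]-\mathrm{mod})[[s]]$.
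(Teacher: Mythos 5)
Ta preuve est correcte et repose exactement sur les mêmes ingrédients que celle du papier : la trivialisation $\HHH^\bullet(\cP,\Omega^{n,m}) \cong \wedge^m\rho\otimes\HHH^\bullet(\cP,\cL^{\otimes n})$, le lemme~\ref{Lm:independance_difference}, et l'hypothèse d'autodualité~\ref{Hyp:Omega_trivial}(iii). La seule différence est cosmétique : là où le papier calcule $\det(1-s\rho)P(s)$, effectue le changement d'indice $m\mapsto d-m$ au moyen de la dualité à l'intérieur de la somme double, puis scinde en parties $m>n$ et $n\geq m$, tu introduis le polynôme \og renversé\fg{} $\tilde Q(s)=s^r\det(1-\rho s^{-1})$, observes que ses coefficients de degré $\geq r$ dans $P(s)\tilde Q(s)$ sont exactement les $S_i$ du lemme (donc constants), et ne fais intervenir l'autodualité qu'en toute fin pour identifier $\tilde Q(s)$ à $(-1)^r\det(\rho)\det(1-s\rho)$ — c'est la même preuve, seulement réordonnée.
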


    \begin{proof}
        Notons $d=\dim\cP$.
        
         Par les hypothèses~\ref{Item:Omega_trivial_functions} et~\ref{Item:Omega_trivial_trivialisable}, l'application canonique \[\cO_{\cP}\otimes\rho\to\Omega^1_{\cP}(\log D)\] est un isomorphisme. Ainsi, pour tout $m\geq 0$, \[\Omega^{n,m}\ito\cL^{\otimes n}\otimes\wedge^m\rho.\] Donc \[\HHH^\bullet(\cP,\Omega^{n,m})\ito\HHH^\bullet(\cP,\cL^{\otimes n})\otimes\wedge^m\rho.\] 
         
On a $\wedge^m\rho\simeq\det(\rho)\otimes\wedge^{d-m}(\dual{\rho})$ pour $ m=0,1,\ldots, d$. Par l'hypothèse~\ref{Item:Omega_trivial_duality}, $\dual{\rho}\simeq\rho$, donc \[\wedge^m\rho\simeq\det(\rho)\otimes\wedge^{d-m}\rho.\] Il s'ensuit que 
\begin{equation}\label{Eq:Omega_duality}
\HHH^\bullet(\cP,\Omega^{n,m})\simeq\det(\rho)\otimes\HHH^\bullet(\cP,\Omega^{n,d-m}).
\end{equation} 
Comme \[\det(1-s\rho)=1-s\rho+s^2\wedge^2\rho-\cdots,\]
le produit  \[\det(1-s\rho)\sum_{n\geq 0}\HHH^\bullet(\cP,\cL^{\otimes n})s^n\] se développe donc comme \[\sum_{n, m\geq 0}(-1)^m\HHH^\bullet(\cP,\cL^{\otimes n})\wedge^m(\rho) s^{n+m},\] qui se note aussi comme  \[\sum_{n,m\geq 0}(-1)^m\HHH^\bullet(\cP,\Omega^{n,m}) s^{n+m}.\] Par~\eqref{Eq:Omega_duality}, c'est égal à  \[\sum_{n\geq 0, 0\leq m\leq d}(-1)^m\det(\rho)\HHH^\bullet(\cP,\Omega^{n,d-m}) s^{n+m},\] ou bien, changeant l'indice $m$ en $d-m$, à la somme de  \[(-1)^d\det(\rho)\sum_{m>n\geq 0}(-1)^m\HHH^\bullet(\cP,\Omega^{n,m}) s^{n-m+d}\] et de \[(-1)^ds^d\det(\rho)\sum_{n\geq m}(-1)^m\HHH^\bullet(\cP,\Omega^{n,m}) s^{n-m}.\] On observe, grâce à~\eqref{Lm:independance_difference}, que 
 \[\sum_{n\geq m}(-1)^m\HHH^\bullet(\cP,\Omega^{n,m}) s^{n-m}=\frac{1}{1-s}\sum_m(-1)^m\HHH^\bullet(\cP,\Omega^{m,m}).\]
Ainsi le polynôme \begin{multline}\phi(s)\colonequals(-1)^d(1-s)\det(\rho)\sum_{m>n\geq 0}(-1)^m\HHH^\bullet(\cP,\Omega^{n,m}) s^{n-m+d}\\+(-1)^ds^d\det(\rho)\sum_m(-1)^m\HHH^\bullet(\cP,\Omega^{m,m})\end{multline} est celui requis par l'énoncé de la proposition.
    \end{proof}

La fin de cette preuve nous donne:
\begin{Cor}
   Le polynôme $\phi(s)$ de la proposition précédente vérifie \[\phi(1)=(-1)^{\dim\cP}\det(\rho)\sum_m(-1)^m\HHH^\bullet(\cP,\Omega^{m,m}).\]
\end{Cor}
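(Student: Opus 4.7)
Le plan est d'évaluer directement en $s=1$ l'expression explicite de $\phi(s)$ obtenue à la fin de la démonstration de la proposition~\ref{Pp:Hilbert}. Rappelons que cette expression est
\[\phi(s)=(-1)^d(1-s)\det(\rho)\sum_{m>n\geq 0}(-1)^m\HHH^\bullet(\cP,\Omega^{n,m}) s^{n-m+d}+(-1)^ds^d\det(\rho)\sum_m(-1)^m\HHH^\bullet(\cP,\Omega^{m,m}),\]
où $d=\dim\cP$. Le premier terme comporte un facteur $(1-s)$, donc s'annule en $s=1$. Le second, évalué en $s=1$, vaut
\[(-1)^d\det(\rho)\sum_m(-1)^m\HHH^\bullet(\cP,\Omega^{m,m}),\]
ce qui est exactement la formule de l'énoncé.

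Il n'y a aucun obstacle dans cette preuve, l'énoncé étant un simple corollaire de la construction explicite du polynôme $\phi(s)$. La seule chose à vérifier est que la somme sur $m>n\geq 0$ dans le premier terme est une \emph{somme finie} (afin que la spécialisation en $s=1$ soit licite dans $\rmK(F[\frG]-\mathrm{mod})$), ce qui résulte de ce que $\HHH^\bullet(\cP,\Omega^{n,m})=0$ pour $m>\dim\cP$, ainsi que pour $n$ ou $m$ assez grand on se ramène par~\eqref{Lm:independance_difference} à un nombre fini de termes non triviaux. Par conséquent, l'évaluation en $s=1$ a bien un sens et donne l'égalité annoncée.
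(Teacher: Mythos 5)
Votre preuve est correcte et coïncide exactement avec celle du texte, qui se contente de lire l'expression explicite de $\phi(s)$ obtenue à la fin de la démonstration de la proposition~\ref{Pp:Hilbert} puis d'évaluer en $s=1$: le premier terme, portant le facteur $(1-s)$, s'annule, et il reste le second. La seule imprécision, sans conséquence, est l'appel au lemme~\ref{Lm:independance_difference} pour justifier la finitude du premier terme: la contrainte $m>n\geq 0$ combinée à $\Omega^m_{\cP}(\log D)=0$ dès que $m>\dim\cP$ borne déjà $n$ et $m$ dans l'intervalle $[0,\dim\cP]$, ce qui suffit.
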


Ce corollaire nous permet, sous les mêmes hypothèses, de réécrire la proposition~\ref{Pp:cohomology_normal_crossing}:
\begin{Pp}\label{Pp:coh_poly}
Sous l'hypothèse~\ref{Hyp:Omega_trivial}, on a  \[\Hdr^\bullet(\cXo)=\Hdr^\bullet(\cP-D)-(-1)^{\dim\cP}\det(\rho)\phi(1).\]
\end{Pp}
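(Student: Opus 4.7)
Le plan est de combiner directement la proposition~\ref{Pp:cohomology_normal_crossing}, qui donne sans hypothèse supplémentaire
\[\Hdr^\bullet(\cXo)=\Hdr^\bullet(\cP-D)-\sum_m(-1)^m\HHH^\bullet(\cP,\Omega^{m,m}),\]
avec le corollaire précédent, qui, sous l'hypothèse~\ref{Hyp:Omega_trivial}, identifie
\[\phi(1)=(-1)^{\dim\cP}\det(\rho)\sum_m(-1)^m\HHH^\bullet(\cP,\Omega^{m,m}).\]

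Concrètement, je multiplierais la seconde égalité par $(-1)^{\dim\cP}\det(\rho)$ afin d'obtenir
\[(-1)^{\dim\cP}\det(\rho)\,\phi(1)=\det(\rho)^{\otimes 2}\cdot\sum_m(-1)^m\HHH^\bullet(\cP,\Omega^{m,m}),\]
puis je substituerais le résultat dans la formule fournie par la proposition~\ref{Pp:cohomology_normal_crossing}. L'identité annoncée suivra immédiatement, pourvu que l'on sache que la classe $\det(\rho)^{\otimes 2}$ coïncide avec la classe triviale dans $\rmK(F[\frG]-\mathrm{mod})$.

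La seule difficulté, mineure, sera donc de justifier cette trivialité de $\det(\rho)^{\otimes 2}$. Elle découlera directement de l'hypothèse~\ref{Item:Omega_trivial_duality}: comme $\rho$ est isomorphe à son dual en tant que $F[\frG]$-module, on a $\det(\rho)\simeq\det(\dual\rho)=\dual{\det(\rho)}=\det(\rho)^{-1}$, la dernière égalité venant de ce que $\det(\rho)$ est un caractère de dimension~$1$; par conséquent $\det(\rho)^{\otimes 2}\simeq\mathbf{1}$, ce qui achève la réécriture.
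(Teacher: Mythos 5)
Votre démonstration est correcte et suit exactement la même démarche que le papier : combiner la proposition~\ref{Pp:cohomology_normal_crossing} avec le corollaire précédent donnant $\phi(1)=(-1)^{\dim\cP}\det(\rho)\sum_m(-1)^m\HHH^\bullet(\cP,\Omega^{m,m})$. Vous avez en outre le mérite d'expliciter le point clé, laissé implicite dans le texte, à savoir que $\det(\rho)^{\otimes 2}\simeq\mathbf{1}$ grâce à l'auto-dualité de $\rho$ (hypothèse~\ref{Item:Omega_trivial_duality}).
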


 \ssec{Caractéristique d'Euler-Poincaré équivariante des diviseurs sur un tore}
  Soit $R$ un anneau de valuation discrète d'idéal maximal $\frm$ et de corps résiduel $k$.  Tout schéma dans cette partie est défini sur $R$. Pour nous, l'avantage principal de cette restriction sur $R$ réside dans la facilité de reconnaître les modules plats sur $R$: ce sont juste les modules \emph{sans torsion}. On utilisera à plusieurs reprises, mais souvent implicitement, le lemme suivant:

       \begin{Lm}\label{Lm:flat} Soit $R$ un anneau de valuation discrète d'idéal maximal $\frm$. Soit $A$ un anneau \emph{factoriel} contenant $R$ comme sous-anneau. Supposons $A/\frm A$ intègre. Soit $a \in A$ non-nul, alors, $a$ définit un diviseur sur $\Spec(A)$ relatif à $R$, i.e. $A/a$ est plat sur $R$, si et seulement si $a\not\in\frm A$.
               \end{Lm}

               \begin{proof} Soit $\pi\in\frm$ un générateur. Comme $A/\frm A$ est intègre, $\pi$ est \emph{premier} dans $A$. Ainsi, \begin{align*} A/a \text{ est plat sur } R &\Equiv A/a\overset{\cdot\pi}{\to}A/a\text{ est injective}\\&\Equiv (aA)\cap(\pi A)=a\pi A\\&\Equiv a\not\in\pi A=\frm A.\end{align*} \end{proof}

  Le cas dégénéré où $R$ est un corps n'est pas exclu.
  
    Soit $\Lambda$ un réseau, i.e. un groupe abélien libre et de rang fini. 
  
  Pour tout anneau $A$, on désigne par $\rmT_A$ le tore $\Spec(A[\Lambda])$, et par $\rho_A$ le $A$-module $\Lambda\otimes A$. Notons $\rmT=\rmT_{R}$ sauf mention contraire explicite.

   \sssec{Diviseurs sur un tore}
    Une fonction sur $\rmT$ s'écrit comme une somme finie \[f=\sum_{\lambda\in\Lambda}c_{\lambda}t^{\lambda}\] où les coefficients $c_{\lambda}\in R$. Le \emph{support} de $f$ est défini comme \[\supp(f)\colonequals\{\lambda|c_{\lambda}\neq 0\}.\]  Pour tout sous-ensemble $S\subseteq\LtR$, on note \[f|_S\colonequals\sum_{\lambda\in S\cap\Lambda}c_{\lambda}t^{\lambda}.\]  On notera $\Delta(\supp(f))$ aussi par $\Delta(f)$, appelé \emph{polytope de Newton} associé à $f$, où $\Delta(S)$ désigne l'enveloppe convexe de $S$. On note $f_k$ l'image de $f$ dans $k[\Lambda]$. On a \[\Delta(f_k)\subseteq\Delta(f).\]
  
   L'anneau $R[\Lambda]$ étant \emph{principal}, tout diviseur sur $\rmT$ est \emph{principal}, i.e. est globalement défini par une fonction sur $\rmT$. Cette fonction est alors déterminée à multiplication par des éléments du groupe $\Unit{R[\Lambda]}=\Unit{R}t^{\Lambda}$ des fonctions inversibles près.

  \sssec{Variétés toriques}
  On suit les notations du livre~\cite{Fulton} en ce qui concerne les variétés toriques. 

   Si $R$ est un corps, rappelons qu'une variété torique sous le tore $\rmT$ est une variété $\cP$, supposée normale, contenant $\rmT$ comme un ouvert dense telle que l'action par multiplication de $\rmT$ sur $\rmT$ s'étend, de manière unique bien sûr, à une action de $\rmT$ sur $\cP$. Les variétés toriques sous le tore $\rmT$ sont \emph{en bijection} avec les \emph{éventails} dans $\dLtR$: la variété torique $\cP$ associée à un éventail $\frF$ dans $\dLtR$ est construite dans la catégorie des schémas comme le \emph{recollement} 
     \[\cP\colonequals\colim_{\sigma\in\frF}U_{\sigma}\] 
     des variétés toriques affines \[U_{\sigma}\colonequals\Spec(R[\dual{\sigma}\cap\Lambda])\] où \[\dual{\sigma}\colonequals\{\lambda\in\LtR|\coupl{\lambda}{\sigma}\geq 0\}.\] Les $U_{\sigma}$ deviennent alors des ouverts affines $\rmT$-stables de $\cP$.
    
    Cette construction reste valable pour $R$ général.

      Dans $U_{\sigma}$, il y a une unique $\rmT$-orbite fermée. Elle est donnée par \[\Ts\colonequals\Spec(R[\sigma^{\perp}\cap\Lambda])\hto U_{\sigma}\] où $\sigma^{\perp}$, noté habituellement $\mathrm{cospan}(\dual{\sigma})$, est défini comme 
      \[\{\lambda\in\LtR|\coupl{\lambda}{\sigma}=0\}\subseteq\dual{\sigma}.\] Remarquons que les orbites $\Ts$ sont naturellement des \emph{tores}, et $\rmT$ agit sur $\Ts$ à travers l'homomorphisme canonique \[\rmT\twoheadrightarrow \Ts.\] 
       Ces homomorphismes admettent des sections homomorphes, et sont, en particulier, lisses.
       
       La collection $(\Ts)_{\sigma\in\frF}$ \emph{est} la stratification de $\cP$ par ses $\rmT$-orbites. On a \begin{gather*}U_{\sigma}=\bigcup_{\tau\in\frF,\tau\subseteq\sigma}\Tt\\ \overline{\Ts}=\bigcup_{\tau\in\frF, \tau\supseteq\sigma} \Tt.\end{gather*}           
   
    Pour tout $\sigma,\tau\in\frF$, $\overline{\Ts}$ est couvert par les ouverts $U_{\tau}$ où $\tau$ parcourt les cônes dans $\frF$ contenant $\sigma$. On a $\overline{\Ts}\cap U_{\sigma}=\Ts$, et \[\overline{\Ts}\cap U_{\tau}=\begin{cases}\Spec (R[\dual{\tau}\cap\sigma^{\perp}\cap\Lambda]),\text{ si }\sigma\subseteq\tau,\\\emptyset,\text{ sinon}.\end{cases}\]
   
   La variété torique $\cP$ est lisse (sur $R$) si et seulement si le éventail $\frF$ est \emph{non-singulier}, i.e. tout cône dans $\frF$ est engendré par une partie d'une $\ZZ$-base de $\dual{\Lambda}$; $\cP$ est propre (sur $R$) si et seulement si $\frF$ est \emph{complet}, i.e. son \emph{support} $|\frF|\colonequals\bigcup_{\sigma\in\frF}\sigma$ est égal à $\dLtR$.
   
   Notons l'\emph{intérieur} d'un cône $\sigma$ par $\ocirc{\sigma}$. C'est le complément de l'union de toutes les faces \emph{propres} de $\sigma$.

   \sssec{\'Eventail associé à un polytope} 
   Tout polytope $\Delta\subset\LtR$ crée une relation d'équivalence sur $\dLtR$: deux éléments de $\dLtR$ sont équivalents s'ils \emph{déterminent} la même face de $\Delta$. Les adhérences de ces classes d'équivalences forment alors un éventail \emph{complet}, noté $\frF(\Delta)$. \footnote{Si $\Delta$ n'était pas de dimension \emph{maximale}, $\frF(\Delta)$ dans $\dLtR$ serait l'\og image inverse\fg{} d'un éventail dans $(\dLtR)/((\Delta-\Delta))^{\perp})$.} Si le polytope est rationel, le éventail le sera aussi.

 \begin{Lm}\label{Lm:polyfaces}
         Soit $\Delta$ un polytope dans $\LtR$. Soit $\frF$ un éventail raffinant $\frF(\Delta)$. Alors toute face de $\Delta$ est de la forme $\Gamma(\sigma)$ pour un (ou plusieurs) $\sigma\in\frF$.
 \end{Lm}

   \sssec{Compactification d'un diviseur sur un tore}\label{Sec:comp}
   Soit $\cP$ une variété torique sur $R$ sous le tore $\rmT$. Supposons $\cP$ lisse sur $R$, construite à partir d'un éventail non-singulier $\frF$. Notons $\frF(1)$ le sous-ensemble des cônes de dimension 1 dans $\frF$. On sait que:

   \begin{Lm}\label{Lm:partial} Le sous-schéma fermé $D\colonequals\cP-\rmT\hto\cP$ est un diviseur à croisements normaux stricts relatifs à $R$, dont les composantes irréductibles sont les $\overline{\Tt}$, où $\tau$ parcourt $\frF(1)$.
   
    Pour tout $I\subseteq\frF(1)$, l'intersection $\bigcap_{\tau\in I}\overline{\Tt}$ est non-vide si et seulement si les éléments de $I$ engendrent un cône $\sigma\in\frF$. Dans ce cas, cette intersection est égale à $\overline{\Ts}$.
    \end{Lm}

   Soit $\cXo\hto \rmT$ un diviseur. Soit $\cX\hto\cP$ l'\emph{adhérence schématique} de $\cXo$. On a bien $\cXo=\cX-\cX\cap D$.
   
   Fixons une fonction $f$ définissant le diviseur $\cXo\hto \rmT$. Le choix de $f$ est seulement pour faciliter l'articulation des énoncés suivants dont le contenu ne dépend pas vraiment de ce choix. 
   
   Le éventail $\frF(\Delta(f))$ associé au polytope $\Delta(f)$ ne dépendant pas du choix de $f$, on le note aussi par $\frF(\cXo)$.

   Le lemme suivant est clair:   
   \begin{Lm}\label{Lm:sigma} Soit $\sigma\in\frF$. Alors:
     
      Le sous-schéma fermé $\cX\cap U_{\sigma}\hto U_{\sigma}$ est défini par $t^{-\lambda_{f,\sigma}}f$ pour les $\lambda_{f,\sigma}\in\Lambda$ tels que $\Delta(f)-\lambda_{f,\sigma}$ est contenu dans $\dual{\sigma}$ et touche $\sigma^{\perp}$.  
     
     De tels $\lambda_{f,\sigma}$ forment une classe modulo $\sigma^{\perp}\cap\Lambda$. Ainsi, \begin{gather*}A_{f,\sigma}\colonequals\sigma^{\perp}+\lambda_{f,\sigma},\\B_{f,\sigma}\colonequals\dual{\sigma}+\lambda_{f,\sigma}\end{gather*} ne dépendent pas du choix de $\lambda_{f,\sigma}$. 
     
     Pour tout $\tau\in\frF$ contenu dans $\sigma$, le sous-schémas fermé \[\cX\cap\overline{\Tt}\cap U_{\sigma}\hto\overline{\Tt}\cap U_{\sigma}\] est défini par \[(t^{-\lambda_{f,\sigma}}f)|_{\tau^{\perp}}.\]     
     
     Pour tout $l\in\ocirc{\sigma}$, le minimum de $l$ sur $\Delta(f)$ est atteint précisément sur \[A_{f,\sigma}\cap\Delta(f).\] Cette intersection est donc une \emph{face} de $\Delta(f)$. On la note par $\Gamma(\sigma)$.

     Le sous-schéma fermé $\cX\cap\Ts\hto\Ts$ est défini par $t^{-\lambda_{f,\sigma}}(f|_{\Gamma(\sigma)})$.
       \end{Lm}
       
       On en déduit:
        \begin{Lm}\label{Lm:flatorb}       Soit $\sigma\in\frF$. 
        
               Alors les conditions suivantes sont équivalentes:
                              \begin{itemize}
               \item $\cX\cap\Ts$ est plat sur $R$.              
               \item L'anneau $R[\sigma^{\perp}\cap\Lambda]/f|_{\Gamma(\sigma)}$ est plat sur $R$.
               \item $f|_{\Gamma(\sigma)}\notin\frm[\Lambda]$.
               \item Le sous-schéma fermé de $\rmT$ défini par $f|_{\Gamma(\sigma)}$ est plat sur $R$, i.e. l'anneau $R[\Lambda]/f|_{\Gamma(\sigma)}$ est plat sur $R$. 
                \item Pour tout $\tau\in\frF$, $\cX\cap\overline{\Tt}\cap U_{\sigma}$ est plat sur $R$. 
                               \end{itemize}                                                                                                  
          \end{Lm}

             \begin{Lm}\label{Lm:smoothorb}       Soit $\sigma\in\frF$. 
        
               Alors, les conditions suivantes sont équivalentes:
                              \begin{itemize}
               \item $\cX\cap\Ts$ est lisse sur $R$.              
               \item Le sous-schéma fermé de $\rmT$ défini par $f|_{\Gamma(\sigma)}$ est lisse sur $R$.
                               \end{itemize}                                                                                                  
          \end{Lm}

        \begin{Lm}  \label{Lm:flatallorb}  
       Les conditions suivantes sont équivalentes:\begin{itemize}
               \item Pour tout $\sigma\in\frF$, $\cX\cap\Ts$ est plat sur $R$.              
               \item Pour tout $\sigma\in\frF$, $\cX\cap\overline{\Ts}$ est plat sur $R$. 
                               \end{itemize}                                                                                             
         De plus, si $\frF$ raffine $\frF(\cXo)$, ces conditions sont équivalentes à:
                           \[\Delta(f_k)=\Delta(f).\]
        \end{Lm}

        \begin{proof} On a les équivalences suivantes:
                         \begin{align*}&\text{Pour tout }\sigma\in\frF, \cX\cap\overline{\Ts}\text{ est plat sur }R\\
                                         \Equiv &\text{Pour tout }\sigma,\tau\in\frF, \cX\cap\overline{\Ts}\cap U_{\tau}\text{ est plat sur }R\\
                                        \Equiv &\text{Pour tout }\tau\in\frF, \cX\cap\Tt\text{ est plat sur }R,
                           \end{align*}
                        dont la deuxième vient du lemme~\ref{Lm:sigma}.
                        
                         La dernière assertion du lemme est conséquence des lemmes~\ref{Lm:polyfaces} et~\ref{Lm:flatorb}.
         \end{proof}

   \begin{Lm}\label{Lm:F_non_degenerate}Les conditions suivantes sont équivalentes:
      \begin{enumerate}
   \item \label{Itm:normal} $\cX$ est lisse sur $R$ et $D\cap\cX\hto\cX$ est un diviseur à croisements normaux stricts relatif à $R$.
   \item \label{Itm:orbitscl} Pour tout $\sigma\in\frF$, $\cX\cap\overline{\Ts}$ est lisse sur $R$.
   \item \label{Itm:orbits} Pour tout $\sigma\in\frF$, $\cX\cap\Ts$ est lisse sur $R$.
   \item \label{Itm:faces} Pour tout $\sigma\in\frF$, le sous-schéma fermé de $\rmT$ défini par $f|_{\Gamma(\sigma)}$ est lisse sur $R$.
    \end{enumerate}
   \end{Lm}

   \begin{proof}
               L'équivalence \ref{Itm:normal}$\Equiv$~\ref{Itm:orbitscl} se déduit du lemme~\ref{Lm:partial}.
               
              L'implication \ref{Itm:orbitscl}$\implies$~\ref{Itm:orbits} est automatique puisque $\Ts$ est un ouvert dans $\overline{\Ts}$.
                            
               Montrons \ref{Itm:orbits}$\implies$~\ref{Itm:orbitscl}. Supposons~\ref{Itm:orbits}. Pour tout $\sigma\in\frF$, le schéma $\overline{\Ts}$ est lisse sur $R$ et est stratifié par des strates $(\Tt)_{\tau\in\frF,\tau\supseteq\sigma}$ lisses sur $R$. De plus, le diviseur $\cX\cap\overline{\Ts}\hto\overline{\Ts}$ est plat sur $R$ par lemme~\ref{Lm:flatallorb}, et son intersection avec toute strate $\Tt$ reste lisse sur $R$ par~\ref{Itm:orbits}. On conclut par lemme suivant. 
                           
               Pour l'équivalence \ref{Itm:orbits}$\Equiv$~\ref{Itm:faces}, on se réfère au lemme~\ref{Lm:smoothorb}.
   \end{proof}

   \begin{Lm}
          Soit $\cY\to\cZ$ un morphisme lisse entre des schémas. Soit $\cX\hto\cY$ un diviseur plat sur $\cZ$. Supposons que $\cY$ est union des sous-schémas localement fermés $(\cY_i)$ lisses sur $\cZ$, tels que pour tout $i$, $\cX\cap\cY_i$ est un diviseur sur $\cY_i$ et est lisse sur $\cZ$. Alors $\cX$ est lui aussi lisse sur $\cZ$.
   \end{Lm}

   \begin{proof}
          Comme $\cX$ est plat sur $\cZ$, il est lisse sur $\cZ$ si toutes les fibres géométriques sont lisses. Ainsi on peut supposer que $\cZ=\Spec(\kappa)$ où $\kappa$ est un corps algébriquement clos. Le problème étant Zariski-local par rapport à $\cY$, on peut supposer que le diviseur $\cX\hto\cY$ est défini par une fonction $f$ sur $\cY$. Alors pour tout $i$, $\cX\cap\cY_i$ est le sous-schéma fermé de $\cY_i$ défini par $f$. Il faut montrer que pour tout $\kappa$-point $x\in\cX$, $\rmd f$ n'est pas identiquement nul sur l'espace tangent $\T_x\cY$. 
          
          Comme les $(\cY_i)$ couvrent $\cY$, $x$ appartient à un certain $\cY_{i_0}$, et donc à $\cX\cap\cY_{i_0}$, lequel est lisse par notre hypothèse. On en déduit que $\rmd f$ n'est pas identiquement nul sur l'espace tangent $\T_x\cY_{i_0}$ qui est un sous-espace de $\T_x\cY$.
   \end{proof}

\begin{Def}
Le diviseur $\cXo\hto \rmT$ est dit $\frF$-\emph{non-dégénéré} s'il vérifie les conditions équivalentes du lemme~\ref{Lm:F_non_degenerate}.
\end{Def}

\begin{Def} 
 Le diviseur $\cXo\hto \rmT$ est dit \emph{non-dégénéré} si pour toute face $\Gamma$ de $\Delta(f)$, le sous-schéma fermé de $\rmT$ défini par $f|_{\Gamma}$ est lisse sur $R$.
 
 Une fonction $f\in R[\Lambda]$ est dite \emph{non-dégénérée} si le sous-schéma fermé du $\rmT$ défini par $f$ est un diviseur non-dégénéré.
 \end{Def}

 \begin{Lm}\label{Lm:equivnondeg} Soit $\cXo\hto\rmT$ un diviseur. Alors les conditons suivantes sont équivalentes:
         \begin{itemize}
         \item
      Il est non-dégénéré.
      \item Il est $\frF$-non-dégénéré pour tout éventail $\frF$ non-singulier dans $\dLtR$.
      \item
      Il est $\frF$-non-dégénéré pour un éventail non-singulier $\frF$ raffinant $\frF(\cXo)$.
      \end{itemize}
 \end{Lm}
 \begin{proof}
           C'est une conséquence du lemme~\ref{Lm:polyfaces}.
      \end{proof}

  La proposition suivante est cruciale dans le paragraphe~\ref{Sec:carp}.
 \begin{Pp} \label{Pp:degbase}
              Si $f\in R[\Lambda]$ est non-dégénéré, alors $\Delta(f_k)=\Delta(f)$.
        
            Si $f\in R[\Lambda]$ satisfait $\Delta(f_k)=\Delta(f)$, alors $f$ est non-dégénéré si et seulement si $f_k$ est non-dégénéré.
 \end{Pp}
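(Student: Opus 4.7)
The plan is to treat the two assertions separately. For the first, I apply non-dégénérescence to the vertex faces $\{v\}$ of $\Delta(f)$. For such a face, $f|_{\{v\}} = c_v t^v$ with $c_v \neq 0$, and since $t^v$ is a unit the closed subscheme $V(c_v t^v) \subset \rmT$ coincides with $\Spec(R[\Lambda]/c_v)$. For this to be flat over $R$ (a necessary condition for smoothness), Lemme~\ref{Lm:flat} forces $c_v \notin \frm$, hence $c_v \in \Unit{R}$ and $\bar c_v \neq 0$ in $k$. Thus every vertex of $\Delta(f)$ lies in $\supp(f_k)$, yielding $\Delta(f) \subseteq \Delta(f_k)$; the reverse inclusion is automatic.

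For the second assertion I would fix a complete non-singular fan $\frF$ in $\dLtR$ refining $\frF(\Delta(f))$, let $\cP$ be the associated toric compactification of $\rmT$ (smooth and proper over $R$), and let $\cX \subset \cP$ denote the scheme-theoretic closure of $V(f) \subset \rmT$. The hypothesis $\Delta(f_k) = \Delta(f)$ implies, via Lemme~\ref{Lm:flatallorb}, that $\cX$ is flat over $R$. Combining Lemme~\ref{Lm:equivnondeg} with Lemme~\ref{Lm:F_non_degenerate}, non-dégénérescence of $f$ becomes equivalent to $\cX$ being lisse over $R$ with $D \cap \cX \hto \cX$ un diviseur à croisements normaux stricts relatif à $R$; the same equivalence over $k$---applied to the special fiber, which by Lemme~\ref{Lm:sigma} and $\Delta(f_k) = \Delta(f)$ is the analogous compactification of $V(f_k) \subset \rmT_k$---translates non-dégénérescence of $f_k$ into the same smoothness and crossings condition for $\cX_k$ over $k$.

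The direction $f$ non-dégénéré $\Rightarrow$ $f_k$ non-dégénéré then follows by base change. For the converse, the essential input is the following propagation principle: a proper flat $R$-scheme whose special fiber is lisse over $k$, respectively carries a given strict normal crossings divisor over $k$, is itself lisse over $R$, respectively carries a relative strict normal crossings divisor. Indeed both conditions are open on the source, contain the entire special fiber by hypothesis, and their closed complement has image in $\Spec(R)$ missing the closed point, which is then empty by properness. Applied to $\cX$ (proper as a closed subscheme of the proper $\cP$, flat by Lemme~\ref{Lm:flatallorb}), this yields smoothness and the crossings condition over $R$, whence $f$ is $\frF$-non-dégénéré and therefore non-dégénéré. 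The main obstacle I anticipate is precisely this propagation step: it is crucial that $\frF$ be taken complete (so that $\cP$, and hence $\cX$, is proper over $R$) rather than merely refining $\frF(\Delta(f))$, and one must check that the relative strict normal crossings condition genuinely defines an open subset of a smooth $R$-scheme so that the same properness argument applies.
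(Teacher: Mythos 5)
Your proof is correct and follows essentially the same route as the paper, with one welcome addition: the paper dismisses the first assertion as \emph{claire}, whereas your argument via the vertex faces $\{v\}$ and Lemme~\ref{Lm:flat} supplies the missing details. For the second assertion you differ from the paper only in which item of Lemme~\ref{Lm:F_non_degenerate} you run through: you use condition~(i) (smoothness of $\cX$ together with the relative strict normal crossings condition on $D\cap\cX$), which is why you rightly flag the openness of the SNC condition as the delicate point. The paper instead routes everything through condition~(ii) --- smoothness of each $\cX\cap\overline{\Ts}$ --- applying the ``propre + plat + fibre sp\'eciale lisse $\Rightarrow$ lisse'' lemma stratum by stratum (each $\cX\cap\overline{\Ts}$ being proper as a closed subscheme of the proper $\cP$, and flat by Lemme~\ref{Lm:flatallorb}); the only openness then needed is that of the smooth locus of a flat finite-type morphism (EGA~IV,~6.8.7). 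Your concern about the SNC openness in fact dissolves this way: given the flatness of the strata, Lemmes~\ref{Lm:partial} and~\ref{Lm:F_non_degenerate} show that the relative SNC condition is precisely the stratum-wise smoothness of condition~(ii), so you are reduced to the same standard openness statement.
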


 \begin{proof}
 La première assertion est claire.
 
 Pour la deuxième, la direction \og seulement si\fg{} vient de la préservation de lissité pendant le changement de base $R\to k$. La direction \og si\fg{} est plus subtile. 
 
Pour tout éventail $\frF$ non-singulier dans $\LtR$. Construisons les schémas $\cP,\cX,D,(\Ts)_{\sigma\in\frF}$ sur $R$ associés au couple $(\frF, f)$ comme ci-dessus.

 Parallèlement, on peut construire les schémas $\cP_k,\cX_k,D_k,((\Ts)_k)_{\sigma\in\frF}$ sur $k$ associés à $(\frF, f_k)$ par la même recette. Ces schémas ne sont pas nécessairement les changements de base des schémas correspondants sur $R$ précédemment construits. Mais pour les $f$ satisfaisant $\Delta(f_k)=\Delta(f)$, ils le sont. 

Supposons que $f$ satisfait $\Delta(f_k)=\Delta(f)$. Prenons un éventail $\frF$ non-singulier et raffinant $\frF(\Delta(f))$. Comme $\frF$ est complet, $\cP$ est propre sur $R$. Ainsi tous les $\cX\cap\overline{\Ts}$ le sont aussi. Ils sont aussi plats sur $R$ par lemme~\ref{Lm:flatallorb}. 

Grâce au lemme suivant, on a des équivalences:\begin{align*}
                    & f_k \text{ est non-dégénéré}\\
         \Equiv & f_k\text{ est }\frF-\text{non-dégénéré}\\ 
          \Equiv &\cX_k\cap\overline{(\Ts)_k}\text{ est lisse sur }k, \text{ pour tout }\sigma\in\frF\\                                                                                                          \Equiv & \text{la fibre spéciale du morphisme }\cX\cap\overline{\Ts}\to\Spec{R}\text{ est lisse pour tout }\sigma\in\frF\\
\Equiv & \cX\cap\overline{\Ts}\to\Spec{R}\text{ est lisse pour tout }\sigma\in\frF.\\
\Equiv & f\text{ est }\frF-\text{non-dégénéré}\\  
\Equiv &  f\text{ est non-dégénéré}.\\
                     \end{align*}
\end{proof}

\begin{Lm}
      Soit $\pi:\cY\to\cZ$ un morphisme propre et plat entre des schémas noethériens. Alors il est lisse si et seulement si toutes ses fibres sur les \emph{points fermés} de $\cZ$ sont lisses.
\end{Lm}

\begin{proof}
    Par~\cite[Corollaire 6.8.7]{EGA4}, le sous-ensemble $S$ de $\cY$ où $\pi$ n'est pas lisse est un fermé de $\cY$. Comme $\pi$ est propre, $\pi(S)$ est un fermé de $\cZ$, et donc, s'il est non-vide, contient un point fermé de $\cZ$, ce qui est exclu par notre hypothèse. Ainsi $\pi(S)$ est vide et $\pi$ est partout lisse.   
    
\end{proof}

Résumons:
\begin{Pp}\label{Pp:divisor}
   Soit $\cXo\hto \rmT$ un diviseur non-dégénéré. Soient $\frF$ un éventail non-singulier raffinant $\frF(\cXo)$, et $\cP$ la variété torique lisse et propre sur $R$ associée à $\frF$. Soit $\cX$ l'adhérence schématique de $\cXo$ dans $\cP$. Notons $D\colonequals\cP-\rmT$. Alors, 
   \begin{itemize}
           \item  L'immersion fermée $\cX\hto\cP$ est un diviseur lisse sur $R$, et $D\hto\cP$ et $D\cap\cX\hto\cX$ sont des diviseurs à croisements normaux stricts relatif à $R$.
           \item Notons $\cL\colonequals\cO(\cX)$ le fibré en droites sur $\cP$. Alors, pour tout $n\geq 0$, le faisceau $\cL^{\otimes n}$ sur $\cP$ est \emph{acyclique} par rapport au foncteur \og section globale\fg{}. Le $R$-module $f^{-n}R[(n\Delta(f)\cap\Lambda)]$ ne dépend pas du choix de $f\in R[\Lambda]$ définissant $\cXo$. Il est canoniquement isomorphe à \[H^0(\cP,\cL^{\otimes n}).\]
           \item Le groupe $\Aut(\Lambda)\ltimes\rmT(R)$ agit naturellement sur $\rmT$. Soit $\frG$ un sous-groupe dont l'action sur $\LtR$ préserve $\frF$, alors $\frG$ est naturellement un sous-groupe de $\Aut(\cP,\cX,D)$. Supposons en plus que $\Lambda\otimes R$ est auto-dual en tant que $R[\frG]$-module. Alors l'hypothèse~\ref{Hyp:Omega_trivial} est vérifiée pour le quadruple $(\cP,\cX,D,\frG)$.             
     \end{itemize}      
\end{Pp}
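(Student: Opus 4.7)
Mon plan consiste à traiter successivement les trois assertions de l'énoncé, en grande partie formellement à partir des lemmes déjà établis, le véritable travail se concentrant sur le calcul cohomologique du deuxième point.

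Pour le premier point, je remarquerai d'abord que, par le lemme~\ref{Lm:equivnondeg}, la non-dégénérescence de $\cXo$ équivaut à sa $\frF$-non-dégénérescence pour notre éventail $\frF$ raffinant $\frF(\cXo)$. Le lemme~\ref{Lm:F_non_degenerate} fournira alors directement la lissité de $\cX\hto\cP$ sur $R$ ainsi que le fait que $D\cap\cX\hto\cX$ est un diviseur à croisements normaux stricts relatif à $R$. Pour $D\hto\cP$ lui-même, il suffira d'invoquer le lemme~\ref{Lm:partial}. Cette partie est essentiellement formelle.

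Pour le deuxième point, je considérerai $\cL\colonequals\cO(\cX)$ comme un fibré en droites $\rmT$-équivariant sur la variété torique $\cP$, trivialisé sur chaque ouvert affine $U_{\sigma}$ par l'équation locale $t^{-\lambda_{f,\sigma}}f$ du lemme~\ref{Lm:sigma}. La fonction d'appui correspondante, linéaire sur chaque $\sigma\in\frF$, sera globalement convexe précisément parce que $\frF$ raffine $\frF(\cXo)$. Il s'ensuivra que $\cL$ est globalement engendré, et en particulier nef. J'invoquerai alors le théorème d'annulation de Demazure, valable en famille sur $R$ par changement de base plat, pour obtenir $\HHH^i(\cP,\cL^{\otimes n})=0$ pour tout $i>0$ et tout $n\geq 0$. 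Quant aux sections globales, elles s'identifieront à $f^{-n}R[(n\Delta(f))\cap\Lambda]$ via l'identité $n\Delta(f)=\bigcap_{\sigma\in\frF}(n\lambda_{f,\sigma}+\dual{\sigma})$, qui traduit au niveau des monômes la condition d'appartenance à $\HHH^0(\cP,\cL^{\otimes n})$ sur chaque $U_{\sigma}$. L'indépendance par rapport au choix de $f$ se verra en outre aisément: si $f'=ut^{\mu}f$ avec $u\in\Unit{R}$ et $\mu\in\Lambda$, les sous-$R$-modules $f^{-n}R[(n\Delta(f))\cap\Lambda]$ et $(f')^{-n}R[(n\Delta(f'))\cap\Lambda]$ du corps des fractions de $R[\Lambda]$ coïncident.

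Pour le troisième point, l'action de $\frG$ sur $\cP$ sera induite par celle sur $\Lambda$, préservant $\frF$, combinée avec celle par translation de $\rmT(R)$; elle préservera automatiquement $D=\cP-\rmT$ ainsi que $\cX$, ce dernier car son adhérence schématique est déterminée par un idéal $\frG$-stable. Je vérifierai ensuite les trois conditions de l'hypothèse~\ref{Hyp:Omega_trivial} sur $(\cP,\cX,D,\frG)$, transposées au-dessus de la base $R$: (i) $\HHH^0(\cP,\cO_{\cP})=R$ via la propreté et la connexité géométrique fibre à fibre; (ii) $\Omega^1_{\cP/R}(\log D)\ito\cO_{\cP}\otimes_R(\Lambda\otimes R)$ est trivial, d'après le fait standard que, sur une variété torique lisse munie de son bord, les formes logarithmiques $\rmd t^{\lambda}/t^{\lambda}$ pour $\lambda$ parcourant une $\ZZ$-base de $\Lambda$ forment une base globale, $\frG$-équivariante par construction; (iii) $\HHH^0(\cP,\Omega^1_{\cP/R}(\log D))\ito\Lambda\otimes R$ est auto-dual en tant que $R[\frG]$-module, exactement par l'hypothèse faite sur $\rho_R$. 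L'obstacle principal se situera dans le deuxième point, à savoir justifier rigoureusement l'annulation $\HHH^{>0}(\cP,\cL^{\otimes n})=0$ et l'égalité explicite des sections globales dans le cadre relatif sur $R$, en s'appuyant sur la géométrie torique équivariante et sur la convexité de la fonction d'appui, tandis que les autres assertions sont des conséquences quasi immédiates des lemmes précédents et de propriétés classiques des formes différentielles logarithmiques sur les variétés toriques.
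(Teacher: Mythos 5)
Votre démonstration suit pour l'essentiel le même chemin que celle du texte. Pour le premier point, même usage du lemme~\ref{Lm:equivnondeg} puis du lemme~\ref{Lm:F_non_degenerate} (l'article ne cite pas explicitement \ref{Lm:partial} pour $D$, mais c'est implicite). Pour le deuxième point, l'article calcule d'abord explicitement $\HHH^0(U_\sigma,\cL^{\otimes n})$ via le lemme~\ref{Lm:sigma} et en tire $\HHH^0(\cP,\cL^{\otimes n})=f^{-n}R[(n\Delta(f))\cap\Lambda]$ (dont l'indépendance vis-à-vis de $f$ est alors tautologique), ce qui donne la génération globale de $\cL^{\otimes n}$; pour l'annulation des $\HHH^{>0}$, il réduit au cas où $R$ est un corps via un argument de changement de base pour les morphismes propres, puis cite le corollaire de la section~5 de Fulton. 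Vous inversez légèrement l'ordre en justifiant d'abord la convexité de la fonction d'appui (équivalente à la génération globale car $\frF$ raffine $\frF(\cXo)$) puis en invoquant l'annulation de Demazure, ce qui est en substance le même énoncé; votre mention de \og changement de base plat\fg{} pour passer à la famille sur $R$ est un peu imprécise — c'est plutôt le théorème de changement de base pour la cohomologie cohérente des morphismes propres, comme l'indique l'article — mais l'intention est la même. Pour le troisième point, même trivialisation $\cO_\cP\otimes\Lambda\ito\Omega^1_\cP(\log D)$ via $\lambda\mapsto t^{-\lambda}\rmd(t^\lambda)$, et même conclusion; vous explicitez davantage la vérification des trois conditions de l'hypothèse~\ref{Hyp:Omega_trivial}, que l'article traite plus succinctement. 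Rien de substantiellement différent.
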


\begin{proof}  Par le lemme~\ref{Lm:equivnondeg}, le diviseur $\cXo\hto\rmT$ est $\frF$-non-dégénéré, ce qui, par définition, montre la première assertation.

         Soit $f\in R[\Lambda]$ définissant $\cXo$. Par le lemme~\ref{Lm:sigma}, on a, pour tout $\sigma\in\frF$ et tout $n\geq 0$, 
            \begin{align*} 
            \HHH^0(U_{\sigma},\cL^{\otimes n})&=(t^{-\lambda_{f,\sigma}}f)^{-n}R[\dual{\sigma}\cap\Lambda]\\
                                                             &=f^{-n}R[(\dual{\sigma}+n\lambda_{f,\sigma})\cap\Lambda].
             \end{align*}         
          On en déduit que                                   
\[\HHH^0(\cP,\cL^{\otimes n})=f^{-n}R[\bigcap_{\sigma\in\frF}(\dual{\sigma}+n\lambda_{f,\sigma})\cap\Lambda].\]
 Pour $n=0$, \[\bigcap_{\sigma\in\frF}\dual{\sigma}=\{0\};\]
 Pour $n>0$, on a \begin{gather*}\dual{\sigma}+n\lambda_{f,\sigma}=n\cdot B_{f,\sigma},\\ \bigcap_{\sigma\in\frF}B_{f,\sigma}=\Delta(f).\end{gather*}
  Donc pour tout $n\geq 0$, \[\HHH^0(\cP,\cL^{\otimes n})=f^{-n}R[(n\Delta(f))\cap\Lambda].\] Il s'ensuit en particulier que tous les $(\cL^{\otimes n})_{n\geq 0}$ sont engendrés par leurs sections globales. Pour montrer l'annulation de toutes leurs cohomologies supérieures, on peut, grâce à \emph{changement de base propre}, supposer que $R$ est un corps. Alors le \og corollaire\fg{} dans \cite[Section~5]{Fulton} permet de conclure.

Enfin, on a l'isomorphisme canonique \[\cO_{\cP}\otimes\Lambda\ito\Omega^1_{\cP}(\log D)\] qui envoie $\lambda\in\Lambda$ sur $t^{-\lambda}\rmd (t^{\lambda})$. Donc \[\HHH^0(\cP,\Omega^1_{\cP}(\log D))=\Lambda\otimes R,\] lequel est auto-dual en tant que $R[\frG]$-module par la condition imposée sur $\frG$.
\end{proof}

\sssec{Caractéristique 0}
Considérons le cas où $R$ est un corps $F$ algébriquement clos de caractéristique 0.

      \begin{Thm}\label{Thm:carzero} Soit $\cXo\hto \rmT$ un diviseur non-dégénéré défini par $f\in F[\rmT]$. Soit $\frG\subset\Aut(\Lambda)\ltimes\rmT(F)$ un sous-groupe fini fixant $f$, tel que $\rho\colonequals\Lambda\otimes F$ est auto-dual en tant que $F[\frG]$-module. Notons $\Delta\colonequals\Delta(f)$. Alors il existe un (unique) polynôme \[\phi(s)\in\rmK(F[\frG]-\mathrm{mod})[s],\] tel que  \[\sum_{n\geq 0}F[\Lambda\cap (n\Delta)]s^n=\frac{\phi(s)}{(1-s)\det(1-s\rho)}\] dans $\rmK(F[\frG]-\mathrm{mod})[[s]]$, et tel que la cohomologie de de Rham algébrique satisfait \[\Hdr^\bullet(\cXo)=\det(1-\rho)-(-1)^{\dim\rmT}\det(\rho)\phi(1)\] dans $\rmK(F[\frG]-\mathrm{mod})$.                  
        \end{Thm}

       Pour tout polytope $\Delta\subset\LtR$, on appelle $\sum_{n\geq 0}F[\Lambda\cap (n\Delta)]s^n$ la \emph{série d'Ehrhart} associée à $\Delta$.

        \begin{proof}
                 Comme $\frG$ fixe $f$, son action sur $\LtR$ préserve $\frF(\cXo)$. Prenons un éventail non-singulier $\frF$ raffinant $\frF(\cXo)$. Comme $\frG$ est fini, on peut supposer en plus que $\frF$ est préservé par $\frG$.  Prenons $\cP$ la variété torique lisse et propre associée à $\frF$, notons $\cX$ l'adhérence schématique de $\cXo$ dans $\cP$, et notons $D\colonequals\cP-\rmT$. Par la proposition~\ref{Pp:divisor}, on a, pour tout $n\geq 0$,
                 \begin{align*}\HHH^\bullet(\cP,\cL^{\otimes n})&=\HHH^0(\cP,\cL^{\otimes n})\\
                                                        &=f^{-n}F[(n\Delta)\cap\Lambda]\\
                                                        &\ito F[(n\Delta)\cap\Lambda]\end{align*} dans $\rmK(F[\frG]-\mathrm{mod})$, où le dernier isomorphisme utilise le fait que $\frG$ fixe $f$. On en déduit que \[\sum_{n\geq 0}\HHH^\bullet(\cP,\cL^{\otimes n})s^n=\sum_{n\geq 0}F[(n\Delta)\cap\Lambda]s^n\] dans $\rmK(F[\frG]-\mathrm{mod})[[s]]$.
                 
                 Ensuite, comme l'hypothèse~\ref{Hyp:Omega_trivial} est vérifiée, on peut appliquer la proposition~\ref{Pp:Hilbert} pour trouver le polynôme $\phi(s)$ demandé. 
                                  
                 L'assertion concernant la cohomologie de $\cXo$ vient de la proposition~\ref{Pp:coh_poly}. En fait, comme $\cP-D=\rmT$, \[\Hdr^\bullet(\cP-D)=\Hdr^\bullet(\rmT)=\det(1-\rho).\]         
        \end{proof}

\sssec{Caractéristique $p>0$}\label{Sec:carp}
             Supposons le corps de base $k$ algébriquement clos de caractéristique $p>0$. Soit $\rmT$ un tore sur $k$ dont le réseau des poids est $\Lambda$. On utilise la cohomologie étale $\ell$-\emph{adique} où $\ell$ est un nombre premier inversible dans $k$. Désignons par $\CC$ le corps des nombres complexes, et par $\Qb$ son sous-corps des nombres algériques sur $\QQ$.
             
             Dans ce cas, le théorème~\ref{Thm:carzero} reste vrai presque verbatim. Mais il y a quelques modifications à apporter, dûes au fait qu'on utilisera le formalisme de \emph{cycles proches} pour le démontrer.

              \begin{Lm}\label{Lm:fieldext}
                   Soit $\frG$ un groupe fini. Soit $F_1\hto F_2$ une extension de corps. Supposons $F_1,F_2$ algébriquement clos de caractéristique~$0$. Alors le morphisme naturel \[\rmK(F_1[\frG]-\mathrm{mod})\otimes_{F_1}F_2\to\rmK(F_2[\frG]-\mathrm{mod})\] est un isomorphisme.
              \end{Lm}

              \begin{Lm} \label{Lm:autodual}
                   Pour tout sous-groupe fini $\frG\subset\Aut(\Lambda)$ et tout corps $F$ algbébriquement clos de caractéristique 0, $\rho_F\colonequals\Lambda\otimes F$ est auto-dual en tant que $F[\frG]$-module.
                  \end{Lm}

                  \begin{proof}
                           Le corps $F$ contient une copie de $\Qb$, il suffit de démontrer le lemme en supposant $F=\Qb$. Ensuite, au vu du lemme~\ref{Lm:fieldext}, on peut effectuer l'extension de scalaires $\Qb\hto\CC$ et supposer $F=\CC$. 
                           
                           On sait qu'une représentation complexe d'un groupe fini $\frG$ est auto-duale si et seulement si son \emph{caractère} prend valeurs dans le sous-corps $\RR$ des nombres réels. En particulier, toute complexification des représentations \emph{réelles} de $\frG$ est auto-dual. C'est bien le cas dans notre situation, notre représentation étant même définie sur $\ZZ$.
                  \end{proof}

          \begin{Thm}  \label{Thm:carp}
                   Soit $\cXo$ un diviseur non-dégénéré sur $\rmT$, défini par une fonction $f$ sur $T$. Soit $\frG\subset\Aut(\Lambda)$ un sous-groupe fini fixant $f$. Notons $\rho\colonequals\Lambda\otimes\Qlb$, et $\Delta\colonequals\Delta(f)$. Alors il existe un (unique) polynôme \[\phi(s)\in\rmK(\Qlb[\frG]-\mathrm{mod})[s],\] tel que
           \[\sum_{n\geq 0}\Qlb[(n\Delta)\cap\Lambda]s^n=\frac{\phi(s)}{(1-s)\det(1-s\rho)}\] 
           dans $\rmK(\Qlb[\frG]-\mathrm{mod})[[s]]$. De plus, on a, dans $\rmK(\Qlb[\frG]-\mathrm{mod})$,
           \[\Hcet^\bullet(\cXo,\Qlb)=\Het^\bullet(\cXo,\Qlb)=\det(1-\rho)-(-1)^{\dim\rmT}\det(\rho)\phi(1).\]
           \end{Thm}

           \begin{proof}           
           En se restreignant à la clôture algébrique du sous-corps de $k$ engendré par les coefficients de $f$, on peut supposer que $k$ est \emph{dénombrable}.
           
            Ensuite, prenons un anneau local $R$, hensélien et de valuation discrète, de corps résiduel $k$, et dont le corps de fractions est de caractéristique 0. On pourrait prendre par exemple $R=\rmW(k)$ l'\emph{anneau de Witt} associé à $k$. Soit $F$ une clôture algébrique de son corps de fractions. Alors $\card{F}=\card{R}\leq\card{\CC}$. Ainsi on peut prendre un prolongement de corps $\iota_F:F\hto\CC$.
                     
      Prenons $f_R\in R[\Lambda]$ un \emph{relèvement} $\frG$-invariant de $f$ tel que \[\Delta(f_R)=\Delta.\]  Soit $\cXo_R\hto\rmT_R$ le diviseur associé, qui est non-dégénéré par la proposition~\ref{Pp:degbase}. Par changement de base, on obtient un diviseur $\cXo_{\CC}\hto\rmT_{\CC}$, aussi non-dégénéré, défini par $f_{\CC}\colonequals\iota_F(f_R)\in\CC[\Lambda]$. 
      
      Par le lemme~\ref{Lm:autodual}, $\rho_{\CC}\colonequals\Lambda\otimes\CC$ est auto-duale comme représentation de $\frG$. En appliquant le théorème~\ref{Thm:carzero} au triplet \[(\cXo_{\CC}\hto\rmT_{\CC},f_{\CC}\in \CC[\Lambda],\frG\subset\Aut(\Lambda)),\] on trouve un polynôme \[\phi(s)\in\rmK(\CC[\frG]-\mathrm{mod})[s]\] tel que \[\sum_{n\geq 0}\CC[\Lambda\cap (n\Delta)]s^n=\frac{\phi(s)}{(1-s)\det(1-s\rho_{\CC})}\] dans $\rmK(\CC[\frG]-\mathrm{mod})[[s]]$, et tel que   
      \[\Hdr^\bullet(\cXo_{\CC})=\det(1-\rho_{\CC})-(-1)^{\dim\rmT}\det(\rho_{\CC})\phi(1)\] dans $\rmK(\CC[\frG]-\mathrm{mod})$.   
      
     Les représentations $\CC[\Lambda\cap (n\Delta)]$ et $\rho_{\CC}$ de $\frG$ étant naturellement définies sur $\ZZ$, les coefficents de $\phi(s)$ sont donc \og définis sur $\Qb$\fg{}, i.e. appartiennent à $\rmK(\Qb[\frG]-\mathrm{mod})$. 
     
     Comme $\card{\Qlb}=\card{\Ql}=\card{\CC}$, on peut choisir une extension de corps $\iota_{\ell}:\Qlb\hto\CC$. Alors l'inclusion $\Qb\hto\CC$ se factorise canoniquement par un prolongement $\QQ\hto\Qlb$ suivi de $\iota_{\ell}$. 
     
     Alors, le polynôme $\phi(s)$ satisfait \begin{gather*}\sum_{n\geq 0}\Qlb[(n\Delta)\cap\Lambda]s^n=\frac{\phi(s)}{(1-s)\det(1-s\rho)},\\
     \Hdr^\bullet(\cXo_{\Qlb})=\det(1-\rho)-(-1)^{\dim\rmT}\det(\rho)\phi(1),\end{gather*}
     puisqu'il satisfait ces identités après extension de scalaires donnée par $\iota_{\ell}$.                         
       
      Montrons ensuite l'égalité \[\Hdr^\bullet(\cXo_{\Qlb})=\Het^\bullet(\cXo,\Qlb)\] dans $\rmK(\Qlb[\frG]-\mathrm{mod})$. Il suffit de la montrer après l'extension de scalaires $\iota_{\ell}$. On explicitera en fait un quasi-isomorphisme $\Aut_R(\cXo_R)$-équivariant:
       \[\Hdr^\bullet(\cXo_{\CC})\ito\Het^\bullet(\cXo,\Qlb)\otimes_{\Qlb,\iota_{\ell}}\CC.\]
       
       L'existence d'un tel morphisme équivariant vient de la proposition~\ref{Pp:compare} suivante, car $\cXo_R$ est lisse sur $R$. Ce morphisme est un quasi-isomorphisme par la même proposition, car on a une \og bonne\fg{} compactification de $\cXo_R$ décrite dans le paragraphe suivant.
       
         Prenons un éventail non-singulier $\frF$ raffinant $\frF(\cXo)$, Comme $\frG$ est fini, on peut supposer en plus que $\frF$ est préservé par $\frG$. Soit $\cP_R$ la variété torique sur $R$ associée à $\frF$. Prenons $\cX_R$ l'adhérence schématique de $\cXo_R$ dans $\cP$. Par la proposition~\ref{Pp:divisor}, $\cXo_R$ est propre et lisse sur $R$, et le complément de l'ouvert $\cXo_R$ dans $\cX_R$ est un diviseur à croisements normaux stricts relatif à $R$.                              
         
         Traitons finalement $\Hcet^\bullet(\cXo,\Qlb)$. Les $\Qlb[\frG]$-modules $\Qlb[(n\Delta(f))\cap\Lambda]$ et $\rho$ étant définis sur $\ZZ$, sont tous auto-duaux. Ainsi $\phi(1)$, et donc $\Het^\bullet(\cXo,\Qlb)$, sont aussi auto-duaux. Comme $\cXo$ est lisse, $\Hcet^\bullet(\cXo,\Qlb)$, étant dual de $\Het^\bullet(\cXo,\Qlb)$ (à décalage pair près) par la dualité de Verdier, est égal à $\Het^\bullet(\cXo,\Qlb)$ dans $\rmK(\Qlb[\frG]-\mathrm{mod})$.
           \end{proof}

             \begin{Pp}\label{Pp:compare}
                            Soient des anneaux $R,k,F$ comme dans la preuve du théorème~\ref{Thm:carp}. Soit $\cXo_R$ un schéma lisse sur $R$. Alors pour tout prolongement \[\iota_F:F\hto\CC, \iota_{\ell}:\Qlb\hto\CC,\] on a un morphisme canonique \[\Hdr^{\bullet}(\cXo_F)\otimes_{F,\iota_F}\CC\to\Het^\bullet(\cXo_k,\Qlb)\otimes_{\Qlb,\iota_{\ell}}\CC\] compatible à tout automorphisme du $R$-schéma $\cXo_R$.                            
                            
                            De plus, si on dispose d'un schéma $\cX_R$ propre et lisse sur $R$, tel que $\cXo_R$ se prolonge comme un ouvert dans $\cX_R$ de manière à ce que le complément soit un diviseur à croisements normaux stricts relatif à $R$, alosr le morphisme ci-dessus est un quasi-isomorphisme.
                      \end{Pp}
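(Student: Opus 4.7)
The strategy is to assemble the comparison morphism as a composition of classical cross-characteristic comparisons over $\CC$ (Grothendieck's algebraic de Rham theorem and Artin's Betti–étale comparison), and then to derive the quasi-isomorphism claim by invoking the logarithmic de Rham description from Section~\ref{Sec:de_Rham} on the de Rham side and proper-smooth base change, applied stratum-wise along the snc compactification, on the étale side.

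To construct the morphism, I would proceed via $\CC$. Flat base change of the algebraic de Rham complex along $\iota_F:F\hto\CC$ gives $\Hdr^\bullet(\cXo_F)\otimes_{F,\iota_F}\CC\ito\Hdr^\bullet(\cXo_\CC)$, which Grothendieck's algebraic de Rham theorem for smooth varieties identifies with $H^\bullet(\cXo_\CC^{\mathrm{an}},\CC)$. A coefficient extension via $\iota_\ell:\Qlb\hto\CC$ and Artin's comparison theorem then yield $H^\bullet(\cXo_\CC^{\mathrm{an}},\Qlb)\otimes_{\Qlb}\CC\ito\Het^\bullet(\cXo_\CC,\Qlb)\otimes_{\Qlb}\CC$, and invariance of étale cohomology of finite type schemes under extensions of algebraically closed base fields gives $\Het^\bullet(\cXo_\CC,\Qlb)\ito\Het^\bullet(\cXo_F,\Qlb)$. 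These assemble into a canonical isomorphism $\Hdr^\bullet(\cXo_F)\otimes_{F}\CC\ito\Het^\bullet(\cXo_F,\Qlb)\otimes_{\Qlb}\CC$. To link $\Het^\bullet(\cXo_F,\Qlb)$ with $\Het^\bullet(\cXo_k,\Qlb)$, I would invoke étale specialization across the henselian trait $\Spec R$: for smooth $\cXo_R/R$ one has $R\Psi\Qlb\cong\Qlb$ by smooth base change, producing a canonical specialization morphism functorial in $R$-endomorphisms of $\cXo_R$. Composing yields the required canonical map.

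For the quasi-isomorphism under the snc compactification hypothesis, I would argue as follows. On the de Rham side, Lemma~\ref{qiso} applied over $F$ identifies $\Hdr^\bullet(\cXo_F)$ with $\HH^\bullet(\cX_F,\Omega^\bullet_{\cX_F}(\log D_F))$; since $\cX_R$ is proper smooth and $\Omega^\bullet_{\cX_R/R}(\log D_R)$ is a complex of locally free coherent sheaves on the proper $R$-scheme $\cX_R$, the relevant coherent cohomologies are compatible with the base changes $R\to F\to\CC$. On the étale side, proper-smooth base change applied to $\cX_R$ itself gives $\Het^\bullet(\cX_k,\Qlb)\ito\Het^\bullet(\cX_F,\Qlb)$, and the same holds for every closed stratum $\bigcap_{i\in I}D_{i,R}$ of the relative snc divisor, each of which is proper smooth over $R$ by the snc hypothesis. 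Assembling these isomorphisms through the excision / Mayer–Vietoris long exact sequences attached to the pair $(\cX_R,D_R)$ (equivalently, the residue spectral sequence of the logarithmic complex) yields $\Het^\bullet(\cXo_k,\Qlb)\ito\Het^\bullet(\cXo_F,\Qlb)$, which upgrades the specialization morphism of the previous paragraph to an isomorphism and thus the composed comparison to a quasi-isomorphism.

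The main obstacle, I expect, is the bookkeeping around the étale specialization in the non-proper smooth case of the first part: one must set it up canonically and $\Aut_R(\cXo_R)$-equivariantly so that in the snc-compactified case it agrees with the isomorphism produced by stratum-wise proper-smooth base change on $\cX_R$, making both assertions of the proposition cohere. Once the specialization is correctly packaged via nearby cycles and $R\Psi\Qlb\cong\Qlb$, the remainder of the proof is a functoriality check together with the log de Rham formalism of Section~\ref{Sec:de_Rham}.
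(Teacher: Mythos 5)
Your proposal follows essentially the same route as the paper. The paper also assembles the $\CC$-linear comparison out of Artin's étale–Betti theorem and Grothendieck's algebraic de Rham theorem (plus the holomorphic Poincaré lemma), and obtains the link between $\Het^\bullet(\cXo_F,\Qlb)$ and $\Het^\bullet(\cXo_k,\Qlb)$ via the nearby-cycle specialization map for a smooth $R$-scheme, citing SGA7~II, Exposé~XIII, Reformulation~2.1.5 for $\rmR\psi(A_{\cXo_F})\simeq A_{\cXo_k}$ and Proposition~2.1.9 of \emph{loc.\ cit.}\ for the fact that a proper smooth snc compactification over $R$ forces this specialization to be an isomorphism. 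Your stratum-wise proper-smooth base-change argument assembled by Mayer--Vietoris is precisely the content of that Proposition~2.1.9, so you are reproving the cited ingredient rather than deviating from the paper. One caveat: the paragraph where you invoke Lemma~\ref{qiso} and the $R$-flatness of $\Omega^\bullet_{\cX_R/R}(\log D_R)$ is a red herring for the quasi-isomorphism claim. The de Rham--Betti--étale comparison over $\CC$ is already an isomorphism for any smooth $F$-variety, with or without a good compactification; the only thing the snc hypothesis is needed for is to upgrade the étale specialization map from a morphism to a quasi-isomorphism, which is a purely étale-cohomological statement.
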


                      \begin{proof}
                         
      Comme $\cXo_R$ est lisse sur $R$, on a par \cite[Exposé XIII, Reformulation 2.1.5]{SGA7-2}, que pour tout groupe abelien $A$ qui est fini en tant qu'ensemble et de torsion premier à $p$,  le cycle proche $\rmR\psi(A_{\cXo_F})$ est isomorphe à $A_{\cXo_k}$. Le morphisme canonique (2.1.8.1) de \emph{loc.cit.} devient alors
      \begin{equation}\label{Eq:specialisa}\Het^\bullet(\cXo_F,A)\to\Het^\bullet(\cXo_k,A).\end{equation}
      Par définition, \begin{gather*}\Het^\bullet(\cXo_F,\Zl)\colonequals\lim_{n\geq 0}\Hcl^\bullet(\cXo_{\mathrm{an}},\ZZ/\ell^n),\\
      \Het^\bullet(\cXo_F,\Qlb)\colonequals\Het^\bullet(\cXo_F,\Zl)\otimes_{\Zl}\Qlb.\end{gather*} Idem pour $\cXo_k$. Ainsi, on déduit de~\eqref{Eq:specialisa} un morphisme canonique \[\Het^\bullet(\cXo_F,\Qlb)\to\Het^\bullet(\cXo_k,\Qlb).\]
  Ce sera un quasi-isomorphisme si $\cXo_R$ ademet une compactification comme décrite dans l'énoncé. En fait, par proposition 2.1.9 de \emph{loc.cit.}, l'existence d'une telle compactification implique que~\eqref{Eq:specialisa} est un quasi-isomorphisme. 
  
  Pour conclure, il reste donc à trouver un isomorphisme canonique \begin{equation}\label{Eq:derhamet}\Hdr^{\bullet}(\cXo_F)\otimes_{F,\iota_F}\CC\ito\Het^\bullet(\cXo_F,\Qlb)\otimes_{\Qlb,\iota_{\ell}}\CC.\end{equation}
  
  Montrons-le.  
  
   Analysons le côté droite de~\eqref{Eq:derhamet}. Notons $\cXo_{\CC}$ le changement de base de $\cXo_F$ par rapport à $\iota_F:F\hto\CC$. Pour tout groupe abélien $A$ comme ci-dessus, l'extension $\iota_F$ induit \[\Het^\bullet(\cXo_F, A)\ito\Het^\bullet(\cXo_{\CC},A),\] lequel, par \cite[Exposé~XI, Théorème~4.4]{SGA4-3} et en notant $\cXo_{\mathrm{an}}$ la variété complexe analytique associée à $\cXo_{\CC}$, est isomorphe à $\Hcl^\bullet(\cXo_{\mathrm{an}},A)$, où $\Hcl^\bullet$ désigne la cohomologie d'un faisceau par rapport à la \emph{topologie classique} d'un \emph{espace topologique}. On en déduit \[\Het^\bullet(\cXo_F,\Qlb)\ito\Hcl^\bullet(\cXo_{\mathrm{an}},\Qlb).\]
 
       Le côté gauche de~\eqref{Eq:derhamet} est isomorphe à $\Hdr^{\bullet}(\cXo_{\CC})$, lequel admet un morphisme naturel, montré quasi-isomorphe dans~\cite{Gro}, vers l'hypercohomologie \[\HH^{\bullet}(\cXo_{\mathrm{an}},\Omega_{\mathrm{an}}^{\bullet})\] où $\Omega_{\mathrm{an}}^{\bullet}$ est le complexe des faisceaux des \emph{formes différentielles holomorphes}. Cette hypercohomologie est quasi-isomorphe à $\Hcl^\bullet(\cXo_{\mathrm{an}},\CC)$ par le lemme de Poincaré, 
       qui dit que le morphisme naturel \[\ul{\CC}\ito\Omega^\bullet_{\mathrm{an}}\] est un quasi-isomorphisme pour la \emph{topologie classique}.           
       \end{proof}

         \begin{Rq}  D'après la preuve, $\frG$ pourrait, plus généralement, être un sous-groupe fini de $\Aut(\Lambda)\ltimes\Lambda\otimes\Unit{R}$ ayant les mêmes propriétés.           \end{Rq}

 \sssec{Trace et volume}        
        Comment calculer explicitement la trace d'un $\gamma\in\frG$ sur $\phi(1)$? 
                
          On a besoin de la notion de \emph{volume} pour les polytopes pour répondre à cette question.

 Pour tout réseau $\Lambda$, $\LtR$, considéré comme un groupe additif muni de sa topologie usuelle, est muni d'une unique \emph{mesure de Haar} telle que la mesure induite sur $(\LtR)/\Lambda$ est de volume totale~1. Pour un polytope $\Delta\subset\LtR$ de dimension maximale, son volume sera mesuré sous cette mesure.
        
        Sinon, en le déplaçant par un élément de $\Lambda$, on peut supposer qu'il contient 0. Alors, prenons $H$ le plus petit sous-espace linéaire de $\LtR$ contenant $\Delta$. Alors $H\cap\Lambda$ est un réseau cocompact dans $H$, ce qui induit une mesure canonique sur $H$ comme ci-dessus. Le volume de $\Delta\subset H$ sera alors mesuré sous cette mesure. En tous cas, nous allons noter le volume de $\Delta$ par $\Vol(\Delta)$.

        \begin{Lm}  \label{Lm:volume}
                 Pour tout polytope (convexe et rationel) $\Delta\subset\LtR$, on a \[\card{(n\Delta)\cap\Lambda}=\Vol(\Delta)n^{\dim\Delta}+O(n^{\dim(\Delta)-1})\] lorsque $n\to+\infty$. Ainsi, \[\sum_{n\geq 0}\card{(n\Delta)\cap\Lambda}s^n\sim\frac{\dim(\Delta)!\Vol(\Delta)}{(1-s)^{\dim(\Delta)+1}}\] lorsque $s\to 1^-$. 
        \end{Lm}

          \begin{Pp} \label{Pp:tracevol}
               Suivons les notations du théorème~\ref{Thm:carzero} (resp. théorème~\ref{Thm:carp}), alors pour tout $\gamma\in\frG$, on a \[\Tr(\gamma;\phi(1))=\det(1-\gamma;(\Delta^{\gamma}-\Delta^{\gamma})^{\perp})\dim(\Delta^{\gamma})!\Vol(\Delta^{\gamma}).\]
               \end{Pp}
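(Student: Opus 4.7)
Mon plan est d'appliquer $\Tr(\gamma;-)$ aux deux membres de l'identité
\[\sum_{n\geq 0}\Qlb[(n\Delta)\cap\Lambda]s^n=\frac{\phi(s)}{(1-s)\det(1-s\rho)}\]
fournie par le théorème~\ref{Thm:carp} (ou \ref{Thm:carzero}), puis de faire tendre $s\to 1^{-}$. Comme $\phi(s)$ est un polynôme à coefficients dans le $\rmK$-groupe, $\Tr(\gamma;\phi(s))$ est lui aussi polynomial, donc sa limite en $s=1$ coïncide avec $\Tr(\gamma;\phi(1))$; la tâche se ramène à comparer, des deux côtés, les comportements asymptotiques près de $s=1$.

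Première étape, la trace de la série d'Ehrhart: $\gamma$ agissant par permutation sur la base canonique de $\Qlb[(n\Delta)\cap\Lambda]$, la trace compte les points fixes, et un $\lambda\in(n\Delta)\cap\Lambda$ est $\gamma$-fixe ssi $\lambda\in\Lambda^{\gamma}\cap n\Delta=(n\Delta^{\gamma})\cap\Lambda^{\gamma}$ (puisque $\Delta^{\gamma}=\Delta\cap(\LtR)^{\gamma}$ par définition). Il viendra donc, par le lemme~\ref{Lm:volume} appliqué au polytope $\Delta^{\gamma}$ dans le sous-réseau $\Lambda^{\gamma}$,
\[\sum_{n\geq 0}\Tr(\gamma;\Qlb[(n\Delta)\cap\Lambda])\,s^n\sim\frac{\dim(\Delta^{\gamma})!\Vol(\Delta^{\gamma})}{(1-s)^{d+1}}\qquad(s\to 1^{-}),\]
où $d\colonequals\dim\Delta^{\gamma}$.

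Deuxième étape, factoriser le dénominateur $(1-s)\det(1-s\rho)$. Je poserais $V_{\Delta}\colonequals\Delta^{\gamma}-\Delta^{\gamma}\subseteq(\LtR)^{\gamma}$; comme $\gamma$ fixe $\Delta^{\gamma}$ point par point, il agit trivialement sur $V_{\Delta}$, et la suite exacte $\gamma$-équivariante de restriction
\[0\to V_{\Delta}^{\perp}\to\dLtR\to\dual{V_{\Delta}}\to 0\]
permet d'écrire $\det(1-s\gamma;\dLtR)=(1-s)^{d}\det(1-s\gamma;V_{\Delta}^{\perp})$, où $V_{\Delta}^{\perp}$ est exactement le sous-espace $(\Delta^{\gamma}-\Delta^{\gamma})^{\perp}$ de l'énoncé. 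L'auto-dualité de $\rho$ (lemme~\ref{Lm:autodual}) donnera $\det(1-s\rho)=\det(1-s\gamma;\dLtR\otimes\Qlb)$, d'où
\[(1-s)\det(1-s\rho)=(1-s)^{d+1}\det(1-s\gamma;V_{\Delta}^{\perp}).\]

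Troisième étape, conclure en multipliant: les $(1-s)^{d+1}$ se compensent, et en faisant $s\to 1^{-}$ dans
\[\Tr(\gamma;\phi(s))=(1-s)^{d+1}\det(1-s\gamma;V_{\Delta}^{\perp})\sum_{n\geq 0}\card{(n\Delta^{\gamma})\cap\Lambda^{\gamma}}\,s^n,\]
on trouvera $\Tr(\gamma;\phi(1))=\det(1-\gamma;(\Delta^{\gamma}-\Delta^{\gamma})^{\perp})\dim(\Delta^{\gamma})!\Vol(\Delta^{\gamma})$, ce qui est la formule voulue. La principale difficulté me paraît être l'étape~2: il s'agit de localiser correctement les $(1-s)$ à simplifier, et c'est l'auto-dualité de $\rho$ qui rend licite le passage à $\dLtR$ — le cadre naturel où vit le sous-espace $(\Delta^{\gamma}-\Delta^{\gamma})^{\perp}$ de l'énoncé. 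Une fois cette factorisation établie, l'étape~3 n'est qu'un passage à la limite, et le reste est l'asymptotique d'Ehrhart bien connue.
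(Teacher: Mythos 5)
Votre démonstration suit exactement la même stratégie que celle du texte: prendre la trace de l'identité de Hilbert--Poincaré, appliquer le lemme~\ref{Lm:volume} au sous-polytope $\Delta^{\gamma}$, puis faire tendre $s\to 1^{-}$ en compensant les puissances de $(1-s)$. Votre deuxième étape --- la suite exacte $0\to(\Delta^{\gamma}-\Delta^{\gamma})^{\perp}\to\dLtR\to\dual{(\Delta^{\gamma}-\Delta^{\gamma})}\to 0$ jointe à l'auto-dualité de $\rho$ --- ne fait que rendre explicite la dernière équivalence asymptotique, que le texte se contente d'affirmer, et corrige au passage une coquille du texte qui écrit $\card{\Lambda\cap(n\Delta)}$ au lieu de $\card{\Lambda\cap(n\Delta^{\gamma})}$.
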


               \begin{proof}                     
                Traitons le cas du théorème~\ref{Thm:carzero}. L'autre cas sera similaire.
                
               On utilise l'égalité  \[\sum_{n\geq 0}F[\Lambda\cap (n\Delta)]s^n=\frac{\phi(s)}{(1-s)\det(1-s\rho)}\] dans $\rmK(F[\frG]-\mathrm{mod})[[s]]$. 
               
               Pour $\gamma\in\frG$, remarquons d'abord que \[\Tr(\gamma; F[\Lambda\cap (n\Delta)])=\card{\Lambda\cap (n\Delta)}.\] Appliquant le lemme~\ref{Lm:volume} au polytope $\Delta^{\gamma}$, i.e. au sous-polytope (rationel) des points fixes sous l'action de $\gamma$ sur $\Delta$, on obtient \begin{align*}\Tr(\gamma;\phi(1))\sim&\Tr(\gamma;\phi(s))\\=&(1-s)\det(1-s\gamma;\rho)\sum_{n\geq 0}\card{\Lambda\cap (n\Delta^\gamma)}s^n\\ \sim &\frac{\det(1-s\gamma;\rho)}{(1-s)^{\dim(\Delta^{\gamma})}}\dim(\Delta^{\gamma})!\Vol(\Delta^{\gamma})\\\sim&\det(1-\gamma;(\Delta^{\gamma}-\Delta^{\gamma})^{\perp})\dim(\Delta^{\gamma})!\Vol(\Delta^{\gamma})\end{align*} lorsque $s\to 1^-$. 
               \end{proof}

\sec{Sommes trigonométriques équivariantes} Suivons les notations du paragraphe~\ref{Sec:Euler}.
Soit $k$ un corps algébriquement clos de caractéristique $p>0$, et $\rmT$ un tore (déployé) sur $k$, dont le réseau des poids est $\Lambda$. Soit $f\in k[\Lambda]$ une fonction sur $\rmT$. Notons $\Dinf(f)\subset\LtR$, appelé \emph{polytope de Newton à l'infini} associé à $f$, l'enveloppe convexe de $\{0\}\cup\Delta(f)$. On note $\AA^1\colonequals\Spec(k[c])$ la droite affine sur $k$.

\begin{Def} \label{Def:nondeginf}
     Une fonction $f\in k[\rmT]$ est dite \emph{non-dégénérée à l'infini} si pour toute face $\Gamma$ de $\Dinf(f)$ ne contenant pas~$0$, les conditions suivantes sont satisfaites: \begin{itemize} \item Le diviseur sur $\rmT$ défini par $f|_{\Gamma}$ est lisse. \item Il existe $l\in\dual{\Lambda}$ telle que, considérée comme fonction sur $\Gamma$, elle est constante de valeur (entière) non-divisible par~$p$.\footnote{Cette deuxième condition n'était pas mentionnée dans~\cite{Loeser}. Cette négligeance a été rattrapée dans~\cite{Fu}.}\end{itemize}
\end{Def}

 \begin{Thm}  \label{Thm:trigtore}
           Soit $\cXo$ un diviseur sur $\rmT$ défini par $f\in k[\rmT]$ non-dégénérée à l'infini. Soit $\frG\subset\Aut(\Lambda)$ un sous-groupe fini fixant $f$. Notons $\rho\colonequals\Lambda\otimes\Qlb$. Alors il existe un (unique) polynôme \[\phi(s)\in\rmK(\Qlb[\frG]-\mathrm{mod})[s],\] tel que
           \[\sum_{n\geq 0}\Qlb[(n\Dinf(f))\cap\Lambda]s^n=\frac{\phi(s)}{(1-s)\det(1-s\rho)}\] dans $\rmK(\Qlb[\frG]-\mathrm{mod})[[s]]$. De plus, considérant $f$ comme un morphisme $\rmT\to\AA^1$, on a, dans $\rmK(\Qlb[\frG]-\mathrm{mod})$,
           \[\Hcet^\bullet(\rmT,f^*\Lpsi)=(-1)^{\dim\rmT}\det(\rho)\phi(1).\]
           \end{Thm}

           \begin{proof}
               Soit $\cF$ un faisceau sur $\AA^1$ modérément ramifié à l'infini. Par la \emph{formule de Grothendieck-Ogg-\v Safarevi\v c} telle que décrite dans~\cite[Théorème (2.2.1.2)]{Laumon} et le fait \[\Swan_{\infty}\Lpsi=1\] comme décrit dans l'Exemple (2.1.2.8) de \emph{loc. cit.}, on a \[\Hcet^\bullet(\AA^1,\cF\otimes\Lpsi)=\Hcet^\bullet(\AA^1,\cF)-\cF_{\bar{\eta}}\] dans $\rmK(\Qlb-\mathrm{mod})$, où $\bar{\eta}$ est un point géométrique localisé sur le point générique de $\AA^1$. Supposons qu'un groupe fini $\frG$ agit sur $\cF$, alors l'égalité ci-dessus est vraie même dans $\rmK(\Qlb[\frG]-\mathrm{mod})$. Pour le voir, on peut appliquer l'égalité ci-dessus à chaque composante isotypique de $\cF$, sachant que ces composantes seront aussi modérément ramifiées à l'infini.
                 
       Par~\cite[Theorem~4.2]{Loeser}, on sait que le complexe de faisceaux $\cF\colonequals\rmR f_!\Qlb$ est modérément ramifié à l'infini. Il est muni d'une action du groupe $\frG$ comme dans l'énoncé puisque $f$ est $\frG$-invariante. Il s'ensuit que  \begin{align*}\Hcet^\bullet(\rmT,f^*\Lpsi)=&\Hcet^\bullet(\AA^1,(\rmR f_!\Qlb)\otimes\Lpsi)\\
      =&\Hcet^\bullet(\AA^1,\rmR f_!\Qlb)-(\rmR f_!\Qlb) _{\bar{\eta}}\\
      =&\Hcet^\bullet(\rmT,\Qlb)-\Hcet^\bullet(f^{-1}(\bar{\eta}),\Qlb)\\ 
      =&\det(1-\rho)-\Hcet^\bullet(G^{-1}(0),\Qlb)\end{align*}  dans $\rmK(\Qlb[\frG]-\mathrm{mod})$, où on a noté par $G$ la fonction $f-c$ sur le tore $\rmT\times_k\bar{\eta}$. On a que $G$ est invariante sous le groupe $\frG$, et que $\Delta(G)=\Dinf(f)$. 
      
      La preuve du thèorème~4.2 de \emph{loc. cit.} montre aussi que $G$ est \emph{non-dégénérée}, on peut donc appliquer le théorème~\ref{Thm:carp} sur~$G$. On obtient ainsi le polynôme $\phi(s)$ ayant les deux propriétés décrites dans l'énoncé.                                   
           \end{proof}

           \ssec{Applications}
           Nous étudions ici des sommes trigonométriques sur des \emph{espaces vectoriels}. 
           
            Soit $U$ un espace vectoriel de dimension finie sur $k$. Soit $B$ une $k$-base de $U$. Elle induit une stratification de $U$ par des tore $\Gm^J$, où $J$ parcourt les sous-ensembles de $B$, via \[\Gm^J\hto U, (t_j)_{j\in J}\mapsto \sum_j t_j\cdot j.\] De plus, le réseau des poids et le réseau des copoids du tore $\Gm^B$ sont tous les deux canoniquement isomorphes à $\ZZ^B$.
            
            Le lemme suivant est clair.

                       \begin{Lm}
            Soit $f$ une fonction sur $U$, alors les conditions suivantes sont équivalentes: \begin{itemize} \item La restriction $f|_{\Gm^B}$ est non-dégénérée à l'inifini. \item Pour tout $J\subseteq B$, $f|_{\Gm^J}$ est non-dégénérée à l'inifini. \end{itemize}
            \end{Lm}

                Pour un $k$-espace vectoriel $V$, notons \begin{gather*}\Sym(V)\colonequals\oplus_{n\geq 0} \Sym^n(V),\\ \ST(V)\colonequals\oplus_{n\geq 0} \ST^n(V).\end{gather*} Alors $\ST(\dual{V})$ est le dual de $\Sym(V)$.
                
               Soit $I$ un ensemble fini. Soit $\gamma\in\frS_I$. Notons par $I/\gamma$ l'ensemble des $\gamma$-orbites dans $I$. Pour une telle $\gamma$-orbite $O$, sa longeur sera notée $|O|$.

\sssec{Des fonctions génératrices associées aux permutations}
         Dans l'anneau des séries formelles $\ZZ[[(t_O)_{O\in I/\gamma}]]$, définissons 
\[\cP_{\g}((t_O)_{O\in I/\g})\colonequals (1-\sum_{J\subseteq I/\g, J\neq\emptyset}(-1)^|J|(\sum_{O\in J}|O|-1)\prod_{O\in J}t_O)^{-1}.\] On la développe  comme \[\sum_{m_O\geq 0, \forall O\in I/\g}\cC_{\gamma}((m_O)_O)\prod_O t_O^{m_O},\] où les $\cC_{\gamma}((m_O)_O)\in\ZZ$.

               \begin{Lm} On a
                \[\cC_{\gamma}((m_O)_O)=\sum_{0\leq n_O\leq m_O,\forall O\in I/\gamma}(\sum_O n_O)!\prod_O\frac{(-|O|)^{n_O}\binom{m_O}{n_O}}{n_O!}.\]
               \end{Lm}

               \begin{proof}
                  On a \begin{align*}&\cP_{\g}((t_O)_{O\in I/\g})\\=&\frac{1}{(1+\sum_O\frac{|O|t_O}{1-t_O})}\times\frac{1}{\prod_O (1-t_O)}\\=&\sum_{0\leq n_O\leq m_O,\forall O\in I/\gamma}\frac{(\sum_O n_O)!}{\prod_O n_O!}\frac{(-|O|t_O)^{n_O}}{(1-t_O)^{n_O+1}}.
                   \end{align*}
                   On conclut par l'égalité \[\frac{t_O^{n_O}}{(1-t_O)^{n_O+1}}=\sum_{m_O}\binom{m_O}{n_O}t_O^{m_O}.\]
               \end{proof}                                

\begin{Rq} \label{Rq:coeffs}
Pour certains $\g$, ces coefficients se calculent de façons plus simples. Voici les cas intervenant dans la proposition~\ref{Pp:traceIIthree}.

a). Pour $\g$ transitif, \[\cP_{\g}(t_I)=(1+(|I|-1)t_I)^{-1},\] donc pour tout $m\geq 0$, \[\cC_{\g}(m)=(1-|I|)^m.\]

b). Pour $\g$ induisant deux orbites $\{i\},I-\{i\}$ où $i$ est un élément de $I$, notons $t_{\{i\}}$ par $t_i$, on a \[\cP_{\g}(t_i,t_{I-\{i\}})=(1+(|I|-2)t_{I-\{i\}}-([I|-1)t_i t_{I-\{i\}})^{-1},\] donc pour tout $m\geq 0$, \[\cC_{\g}(m,m)=(|I|-1)^m.\]

c). Pour $\g=\id$, on a \[\cP_{\g}((t_i)_{i\in I})=(1-\sum_{J\subseteq I,|J|\geq 2}(-1)^|J|(|J|-1)\prod_{i\in J}t_i)^{-1}.\] Lorsque |I|=3, on a, d'après le commentaire de Gheorghe Coserea dans~\cite{oeis}, que \[\cC_{\id}(m,m,m)=(-1)^m\rmF_m,\] où $\rmF_m\colonequals\sum_{i=0}^m\binom{m}{i}^3$ sont les \emph{nombres de Franel}. 
\end{Rq}

           \sssec{Cas \textrm{I}}                                                
               Soient $(V_i)_{i\in I}$ des sous-espaces vectoriels de dimensions finies de $V$ tels que $V_i=V_{\gamma(i)}$ pour tout $i$. Ainsi pour toute $\gamma$-orbite $O\subset I$, on peut définir $V_O\subset V$ comme $V_i$ pour un quelconque $i\in O$. Notre $\gamma$ agit sur $\prod_{i\in I} V_i$ via $(v_i)\mapsto (v_{\gamma^{-1}(i)})$. Notons par $\phi$ le morphisme \[\prod_i V_i\to\Sym(V), (v_i)\mapsto\prod(1+v_i).\] Il est invariant sous $\gamma$.

Suivant ces notations, on a:                                
                 \begin{Pp} \label{Pp:traceI}  Pour tout $l\in\ST(\dual{V})$ générique, \[\Tr(\gamma;\Hcet^\bullet(\prod_i V_i,(l\circ\phi)^*\Lpsi))=\cC_{\gamma}((\dim V_O)_O).\]                               
                \end{Pp}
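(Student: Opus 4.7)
Le plan de démonstration suit la stratégie esquissée dans l'introduction: stratifier $\prod_i V_i$ par des tores $\gamma$-stables, appliquer le théorème~\ref{Thm:trigtore} sur chacun, puis identifier la somme résultante avec $\cC_\gamma((\dim V_O)_O)$ par un calcul combinatoire.

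Pour chaque orbite $O\in I/\gamma$, choisissons une base $B_O$ de $V_O$; ceci munira $\prod_i V_i$ d'une base indexée par $\cI\colonequals\{(i,b)\mid i\in I,\,b\in B_{[i]}\}$ sur laquelle $\gamma$ agit (via son action sur $I$) avec orbites de cardinal $|O|$. Stratifiant $\prod_i V_i=\bigsqcup_S\Gm^S$ par les $S\subseteq\cI$, les sous-ensembles $\gamma$-stables sont paramétrés par $(S_O\subseteq B_O)_{O\in I/\gamma}$; les strates non-$\gamma$-stables se permutent sans point fixe sous $\gamma$ et ne contribuent donc pas à la trace. Par additivité de la caractéristique équivariante, on sera ramené à
\[\Tr(\gamma;\Hcet^\bullet(\prod_i V_i,(l\circ\phi)^*\Lpsi))=\sum_{(S_O)_O}\Tr(\gamma;\Hcet^\bullet(\Gm^S,(l\circ\phi|_{\Gm^S})^*\Lpsi)).\]

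Sur chaque strate $\Gm^S$, la restriction $\phi|_{\Gm^S}((t_{(i,b)}))=\prod_i\bigl(1+\sum_{b}t_{(i,b)}\cdot b\bigr)$ est un produit de fonctions affines dans les coordonnées torales; la non-dégénérescence à l'infini de $l\circ\phi|_{\Gm^S}$ au sens de la Définition~\ref{Def:nondeginf}, pour $l\in\ST(\dual V)$ générique, se déduira de la proposition~\ref{Pp:ThomI} via le lemme~\ref{Lm:duallisse} (la condition portant sur la caractéristique $p$ étant automatique pour $l$ générique). Le théorème~\ref{Thm:trigtore} combiné à la proposition~\ref{Pp:tracevol} donnera alors la contribution d'une telle strate comme
\[(-1)^{|S|}\det(\gamma;\ZZ^S)\,\det(1-\gamma;(\Delta_S^\gamma-\Delta_S^\gamma)^\perp)\,\dim(\Delta_S^\gamma)!\,\Vol(\Delta_S^\gamma),\]
où $\Delta_S\colonequals\Dinf(l\circ\phi|_{\Gm^S})$ est essentiellement indépendant du $l$ générique choisi.

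L'obstacle principal sera l'étape finale: reconnaître la somme sur $(S_O)_O$ comme $\cC_\gamma((\dim V_O)_O)$. Grâce à la structure produit de $\phi$, le polytope $\Delta_S$ s'écrit comme somme de Minkowski des simplexes standards associés aux facteurs $(1+v_i)$, et son sous-polytope $\gamma$-fixe se décrit explicitement à partir de $(|S_O|)_O$ et $(|O|)_O$. En introduisant les variables formelles $t_O$ marquant $|S_O|$ et en regroupant les contributions selon $J\colonequals\{O\mid S_O\neq\emptyset\}$, la somme se réorganisera en une série génératrice qui, après simplification, égalera $\cP_\gamma((t_O))$: le facteur $\sum_{O\in J}|O|-1$ devrait provenir du produit $\dim(\Delta_S^\gamma)!\,\Vol(\Delta_S^\gamma)\,\det(1-\gamma;\cdot)$, et le signe $(-1)^{|J|}$ des signes globaux des strates. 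Le coefficient de $\prod_O t_O^{\dim V_O}$ sera alors précisément $\cC_\gamma((\dim V_O)_O)$, ce qui conclura. Cette identification combinatoire, qui mêle théorie d'Ehrhart équivariante et géométrie des polytopes de Newton des produits de facteurs affines, constitue le cœur calculatoire du résultat.
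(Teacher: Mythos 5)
Your proposal follows the paper's proof closely: the same stratification of $\prod_i V_i$ by coordinate tori $\prod_i\Gm^{J_i}$ built from bases $B_O$ of the $V_O$, the same observation that only the $\gamma$-stable strata (those with $J_i$ constant on $\gamma$-orbits) contribute to the trace, the same recourse to la proposition~\ref{Pp:ThomI} et au lemme~\ref{Lm:duallisse} for the non-degeneracy of $(l\circ\phi)|_T$ at infinity, and the same use du théorème~\ref{Thm:trigtore} et de la proposition~\ref{Pp:tracevol} to express each stratum's contribution by a volume formula.

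Where you diverge is in the final combinatorial step. The paper does \emph{not} reorganise the sum over strata into a generating series in the variables $t_O$ and compare it with the defining rational expression $\cP_\gamma$. Instead it first works out, for a $\gamma$-stable torus $T$ with $J_O\subseteq B_O$, $|J_O|=n_O$, that $\dim T=\sum_O|O|n_O$, $\det(\gamma;\Lambda_T)=(-1)^{\sum_O n_O(|O|-1)}$, $\dim\Delta^\gamma=\sum_O n_O$, $\Vol(\Delta^\gamma)=\prod_O 1/n_O!$ and $\det(1-\gamma;(\Delta^\gamma-\Delta^\gamma)^\perp)=\prod_O|O|^{n_O}$, hence a per-stratum contribution $(\sum_O n_O)!\prod_O(-|O|)^{n_O}/n_O!$; summing over all $J_O\subseteq B_O$ produces the binomial coefficients $\binom{\dim V_O}{n_O}$ and the total coincides \emph{term by term} with the closed formula for $\cC_\gamma$ given in the lemma preceding Remarque~\ref{Rq:coeffs}, so no generating-series juggling is needed at the geometric end. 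Your suggestion that the factor $\sum_{O\in J}|O|-1$ should ``come from'' $\dim(\Delta_S^\gamma)!\,\Vol(\Delta_S^\gamma)\,\det(1-\gamma;\cdot)$ is misleading: this factor is an artefact of rewriting the defining rational form of $\cP_\gamma$ (via $1+\sum_O|O|t_O/(1-t_O)$ multiplied by $\prod_O(1-t_O)$), and it disappears once the lemma's closed form for $\cC_\gamma$ is in hand; the geometric side produces the factors $(-|O|)^{n_O}$ and $(\sum_O n_O)!/\prod n_O!$ directly. I would also temper your parenthetical ``la condition portant sur la caractéristique $p$ étant automatique pour $l$ générique'': what the genericity of $l$ ensures is that the Newton polytope is the full product $\prod_i\Delta(\{0\}\cup B_i)$; the second clause of la définition~\ref{Def:nondeginf} is then automatic for purely combinatorial reasons (on any face not containing $0$ one can use a sum of dual basis vectors, constant of value $1$), not because $l$ is generic. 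These are presentation issues rather than gaps; the overall strategy and the hard geometric input are the same as in the paper, and the plan would yield a correct proof once the combinatorics are tied off by invoking the lemma for $\cC_\gamma$ instead of re-deriving $\cP_\gamma$.
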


                  \begin{proof}  Il s'agit d'une somme trigonométrique sur $\prod_i V_i$, c'est à dire sur un \emph{espace vectoriel}. On aimerait la calculer en s'appuyant sur le théorème~\ref{Thm:trigtore}. Ce théorème étant sur les tores, on doit d'abord stratifier $\prod_i V_i$ par des tores.
                                    
                  Pour tout $i$, prenons une $k$-base $B_i$ de $V_i$, de sorte que $B_i=B_{\gamma(i)}$ sous l'identification $V_i=V_{\gamma(i)}$. Ceci donne une $k$-base $B_O$ de $V_O$ pour toute $\gamma$-orbite $O\subset I$. Ainsi pour tout $i\in I$, $V_i$ est stratifié par les tores $T_{J_i}\colonequals\Gm^{J_i}$ où $J_i$ parcourt les sous-ensembles de $B_i$. Le produit $\prod_i V_i$ est stratifié par les $\prod_i T_{J_i}$. D'ailleurs, le réseau des poids et le réseau des copoids de $\prod_i T_{B_i}$ sont tous les deux canoniquement isomorphes à $\prod_i\ZZ^{B_i}$.                  
                   
                   Pour $l\in\ST(\dual{V})$ générique, considérons la fonction $l\circ\phi$ restreinte au tore $\prod_i T_{B_i}\subset \prod_i V_i$. Le polytope de Newton de cette restriction est $\prod_i\Delta(\{0\}\cup B_i)$ où $\Delta(\{0\}\cup B_i)$ désigne l'enveloppe convexe de $\{0\}\cup B_i$ dans $\RR^{B_i}=\sum_{b\in B_i}\RR b$. Ce polytope contient 0, donc coincïde avec le polytope de Newton à l'infini de cette restriction. Remarquons en passant que pour ce genre de polytope, la preuve de~\cite[Theorem~4.2]{Loeser} montre que la première des deux conditions suivantes implique la deuxième:\begin{itemize}\item Pour tout $l\in\ST(\dual{V})$ générique, la fonction $(l\circ\phi)|_{\prod_i T_{B_i}}$ est non-dégénérée à l'infini.\item Pour tout $l\in\ST(\dual{V})$ générique, la fonction $(l\circ\phi)|_{\prod_i T_{B_i}}$ est non-dégénérée.\end{itemize} Ce polytope est un produit de simplexes. Ainsi ses faces sont produits de sous-simplexes, i.e. sont de la forme \[\prod_{i\in I_1}\Delta(\{0\}\cup J_i)\times\prod_{i\in I_2}\Delta(J_i)\] où  $\alpha:I=I_1\sqcup I_2$ est une partition et les $J_i\subseteq B_i$ pour tout $i\in I$. 
                   
                   Montrons que $(l\circ\phi)|_{\prod_i T_{B_i}}$ est non-dégénérée pour $l$ générique. Concrètement, d'après la description des faces ci-dessus, il faut, la deuxième condition de la définition~\ref{Def:nondeginf} étant évidemment vérifiée, montrer que pour toute partition $\alpha:I=I_1\sqcup I_2$ et pour tout $l\in\ST(\dual{V})$ générique, tous les morphismes \[\phi_{\alpha}:\prod_{i\in I}T_{J_i}\to\Sym(V), (t_i)\mapsto \prod_{i\in I_1}(1+t_i)\times\prod_{i\in I_2}t_i\] sont tels que: $l\circ\phi_{\alpha}$ définit un diviseur lisse sur $\prod_{i\in I}T_{J_i}$.
                                      
                  On peut compléter les morphismes ci-dessus en des diagrammes commutatifs \[\begin{array}{ccc} 
          \prod_{i\in I}T_{J_i} &\overset{\phi_{\alpha}}{\to} & \Sym(V)-\{0\}\\
                      \pi_{\alpha}\da &   & \da    \\
\prod_{i\in I_1}\PP(k\oplus V_{J_i})\times\prod_{i\in I_2}\PP(V_{J_i})&\overset{m}{\to}&\PP(\Sym(V)),
           \end{array}\]  où $\pi_{\alpha}$ désigne le produit des morphismes lisses \begin{gather*} T_{J_i}\to\PP(k\oplus V_{J_i}), t\mapsto k\cdot (1+t),\text{ pour tout } i\in I_1;\\T_{J_i}\to\PP(V_{J_i}), t\mapsto k\cdot t,\text{ pour tout } i\in I_2,\end{gather*} et $m$ est donné simplement par la multiplication. 
           
           On sait que $m$ satisfait~\eqref{Eq:dimension} par proposition~\ref{Pp:ThomI}. Ainsi, le lemme~\ref{Lm:duallisse} montre la non-dégénérescence à l'infini souhaitée. 
           
           Notre $\gamma$ induit une permutation des tores $T=\prod_i T_{J_i}$. Le polytope de Newton, ainsi que celui à l'infini, de $(l\circ\phi)|_T$ sont tous les deux donnés par \[\Delta=\prod_{i\in I}\Delta(\{0\}\cup J_i)\subset\Lambda_T\otimes\RR.\]
                      
           Un tore $T$ comme ci-dessus est préservé par $\gamma$ si et seulement s'il existe des $J_O\subseteq B_O$ pour tout $O\in I/\gamma$ tels que $J_i=J_O$ pour tout $i$ dans l'orbite~$O$.
           
           Pour un tel $T$ $\gamma$-stable, on a \begin{gather*}\dim T=\sum_{i\in I}|J_i|=\sum_{O\in I/\gamma}|O|\cdot|J_O|,\\ \det(\gamma,\Lambda_T)=(-1)^{\sum_O|J_O|(|O|-1)}.\end{gather*} Le sous-polytope $\Delta^\gamma$ des points fixes par $\gamma$ est l'image de l'application \og diagonale\fg{} \[\prod_O\Delta(\{0\}\cup J_O)\hto\prod_{i\in I}\Delta(\{0\}\cup J_i).\] On a donc \begin{gather*} \dim(\Delta^\gamma)=\sum_O|J_O|,\\\Vol(\Delta^\gamma)=\prod_O\Vol(\Delta(\{0\}\cup J_O))=\prod_O\frac{1}{|J_O|!},\\         
           \det(1-\gamma;(\Delta^\gamma-\Delta^\gamma)^{\perp})=\prod_O|O|^{|J_O|},\end{gather*} où \[(\Delta^\gamma-\Delta^\gamma)^{\perp}\colonequals\{u\in\dual{(\Lambda_T)}\otimes\RR|u\text{ est constante sur }\Delta^\gamma\}.\]           
            La trace \[\Tr(\gamma;\Hcet^\bullet(\prod_i V_i,(l\circ\phi)^*\Lpsi))\] est la somme des \[\Tr(\gamma;\Hcet^\bullet(T,(l\circ\phi)^*\Lpsi))\] où $T$ parcourt les strates $\gamma$-stables comme ci-dessus de $\prod_i V_i$. Si $l\in\ST(\dual{V})$ est générique, comme on a déjà montré que $(l\circ\phi)|_T$ est non-dégénérée à l'infini, le théorème~\ref{Thm:trigtore} et la proposition~\ref{Pp:tracevol} impliquent que \begin{align*}&\Tr(\gamma;\Hcet^\bullet(T,(l\circ\phi)^*\Lpsi))\\=&(-1)^{\dim T}\det(\gamma;\Lambda_T)\det(1-\gamma;(\Delta^\gamma-\Delta^\gamma)^{\perp})\dim(\Delta^\gamma)!\Vol(\Delta^\gamma)\\=&(\sum_O|J_O|)!\prod_O\frac{(-|O|)^{|J_O|}}{|J_O|!}.\end{align*} En somme, \begin{align*}&\Tr(\gamma;\Hcet^\bullet(\prod_i V_i,(l\circ\phi)^*\Lpsi))\\=&\sum_{J_O\subseteq B_O,\forall O\in I/\gamma}(\sum_O|J_O|)!\prod_O\frac{(-|O|)^{|J_O|}}{|J_O|!}\\=&\sum_{0\leq n_O\leq \dim(V_O),\forall O\in I/\gamma}(\sum_O n_O)!\prod_O\frac{(-|O|)^{n_O}\binom{\dim V_O}{n_O}}{n_O!}\\=&\cC_{\gamma}((\dim V_O)_O).\end{align*}                                                                                                                                                                                                                          
         \end{proof}

                  \sssec{Cas \textrm{II}}
                           Soit $X$ une courbe projective lisse irréductible sur~$k$. Soit $\cL$ un fibré en droites sur $\cX$. Soient $(\cL_i)_{i\in I}$ des sous-faisceaux de $\cL$ de sorte que $\cL_i=\cL_{\gamma(i)}$ pour tout $i\in I$. Pour toute $\gamma$-orbite $O\subseteq I$, notons $\cL_O\subseteq\cL$ comme $\cL_i$ pour un quelconque $i\in O$. Prenons le morphisme \[\phi:\prod_i\HX{\cL_i}\to\oplus_{n\geq 0} \HX{\cL^{\otimes n}}, (s_i)\mapsto\prod(1+s_i).\]

                           \begin{Pp}\label{Pp:TraceII} Pour tout $l\in\oplus_{n\geq 0}\HHH^1(X,\cL^{\otimes (-n)}\otimes\Omega)$ générique, \[\Tr(\gamma;\Hcet^\bullet(\prod_i\HX{\cL_i},(l\circ\phi)^*\Lpsi)=\cC_{\gamma}((\dim\HX{\cL_O})_O).\] 
                           \end{Pp}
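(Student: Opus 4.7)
Le plan est de suivre de près la stratégie de la démonstration de la proposition~\ref{Pp:traceI}, en remplaçant simplement l'argument de dimension des strates de Thom--Boardman du cas \textrm{I} par celui, plus puissant, du cas \textrm{II}. Plus précisément, pour chaque $\gamma$-orbite $O\subseteq I$, on fixe une $k$-base $B_O$ de $\HX{\cL_O}$, d'où des bases $B_i=B_O$ pour $i\in O$ de chaque $\HX{\cL_i}$. Ceci stratifie $\prod_i\HX{\cL_i}$ par les tores $T=\prod_i\Gm^{J_i}$, $J_i\subseteq B_i$. Un tel tore est $\gamma$-stable si et seulement si $J_i=J_O$ pour tout $i\in O$. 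Sur chaque strate $\gamma$-stable, les polytopes de Newton et de Newton à l'infini de $(l\circ\phi)|_T$ coïncident et sont tous les deux donnés par $\Delta=\prod_i\Delta(\{0\}\cup J_i)$.

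La première étape essentielle est de montrer que, pour $l$ générique, la fonction $(l\circ\phi)|_T$ est non-dégénérée à l'infini pour chaque telle strate. Les faces de $\Delta$ ne contenant pas $0$ sont de la forme $\prod_{i\in I_1}\Delta(\{0\}\cup J_i')\times\prod_{i\in I_2}\Delta(J_i')$ pour une partition $\alpha: I=I_1\sqcup I_2$ avec $I_2\neq\emptyset$ et des $J_i'\subseteq J_i$. La deuxième condition de la définition~\ref{Def:nondeginf} (existence de $l\in\dual{\Lambda}$ constante à valeur non-divisible par $p$) est immédiate pour ce type de polytope, car pour chaque facteur non-trivial $\Delta(\{0\}\cup J_i')$ ou $\Delta(J_i')$ la somme des coordonnées donne une telle forme linéaire. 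Pour la première (lissité), on complète le morphisme $\phi_\alpha$ en un diagramme
\[\begin{array}{ccc}\prod_{i\in I}T_{J_i'} &\xrightarrow{\phi_\alpha}& \oplus_{n\geq 0}\HX{\cL^{\otimes n}}-\{0\}\\ \pi_\alpha\downarrow & & \downarrow \\ \prod_{i\in I_1}\PP(k\oplus\HX{\cL_{J_i'}})\times\prod_{i\in I_2}\PP(\HX{\cL_{J_i'}}) &\xrightarrow{m}& \PP(\cK),\end{array}\]
où $\cK$ est le corps de fractions de $\prod_n\HX{\cL^{\otimes n}}$, le morphisme $m$ est la multiplication de la section~\ref{Sec:corps}, et $\pi_\alpha$ est lisse. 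Appliquant le théorème~\ref{Thm:ThomII}, $m$ vérifie~\eqref{Eq:dimension}, et donc par le lemme~\ref{Lm:duallisse}, $l\circ\phi_\alpha$ définit un diviseur lisse sur $\prod_i T_{J_i'}$ pour $l$ générique. C'est précisément là qu'intervient l'ingrédient nouveau par rapport au cas \textrm{I}, et c'est l'obstacle principal, déjà traité en amont.

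Une fois la non-dégénérescence à l'infini établie, on applique le théorème~\ref{Thm:trigtore} et la proposition~\ref{Pp:tracevol} à chaque strate $\gamma$-stable. Les calculs combinatoires sont identiques à ceux du cas \textrm{I}: pour une strate avec $J_i=J_O$,
\[\Tr(\gamma;\Hcet^\bullet(T,(l\circ\phi)^*\Lpsi))=(\sum_O|J_O|)!\prod_O\frac{(-|O|)^{|J_O|}}{|J_O|!},\]
et en sommant sur les strates $\gamma$-stables, i.e.\ sur les choix $(J_O\subseteq B_O)_O$, on regroupe par les tailles $n_O\colonequals|J_O|$ en utilisant $\binom{\dim\HX{\cL_O}}{n_O}$ pour compter les $J_O$ de taille $n_O$, ce qui donne exactement $\cC_\gamma((\dim\HX{\cL_O})_O)$ d'après la formule explicite du lemme précédent.
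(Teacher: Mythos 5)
Your overall strategy is the right one — stratify by coordinate tori, check non-degeneracy at infinity on each $\gamma$-stable stratum via Theorem~\ref{Thm:ThomII} and Lemma~\ref{Lm:duallisse}, then repeat the combinatorics of Proposition~\ref{Pp:traceI}. However, there is a real gap at the very first step: you declare \og on fixe une $k$-base $B_O$ de $\HX{\cL_O}$\fg{} and then, in the commutative diagram, silently write $\PP(k\oplus\HX{\cL_{J_i'}})$ and $\PP(\HX{\cL_{J_i'}})$ as if the span of the subset $\{e_b\}_{b\in J_i'}$ of your basis were automatically of the form $\HX{\cL_{J_i'}}$ for some \emph{subsheaf} $\cL_{J_i'}\subseteq\cL$. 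For an arbitrary basis this is false, and without it you cannot invoke Theorem~\ref{Thm:ThomII}, whose hypothesis is precisely that each factor $V_i$ equals $\HX{\cL_i}$ or $k\oplus\HX{\cL_i}$ for a sub\emph{sheaf} $\cL_i\subseteq\cL$. The obstacle you describe as \og déjà traité en amont\fg{} is the dimension bound of Theorem~\ref{Thm:ThomII}; but to \emph{apply} it you must first put yourself into its setting, and this requires choosing the basis carefully.

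The paper's proof handles exactly this point. For each $i$ it takes $B_i$ to be a set of $\dim\HX{\cL_i}$ \emph{pairwise distinct points of the curve} $X$ such that $\HX{\cL_i(-\sum_{b\in B_i}b)}=0$ (equivalently, evaluation at $B_i$ is an isomorphism $\HX{\cL_i}\ito\oplus_{b\in B_i}\cL_i|_b$). Then for each $J_i\subseteq B_i$ it defines a genuine sub-line-bundle $\cL_{i,J_i}\colonequals\cL_i(-\sum_{b\in B_i-J_i}b)\hookrightarrow\cL_i\hookrightarrow\cL$ and verifies that $\HX{\cL_{i,J_i}}$ is exactly the span of the basis vectors $(e_b)_{b\in J_i}$, where $e_b$ generates the one-dimensional space $\HX{\cL_{i,\{b\}}}$. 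Only with this choice do the vertical maps $\pi_\alpha$ land in products of $\PP(k\oplus\HX{\cL_{i,J_i}})$ and $\PP(\HX{\cL_{i,J_i}})$, so that Theorem~\ref{Thm:ThomII} applies. Everything after that (applying Lemma~\ref{Lm:duallisse}, Theorem~\ref{Thm:trigtore}, Proposition~\ref{Pp:tracevol}, and the Franel-type sum over strata) is exactly as you describe and matches the paper. So the fix is local: replace your arbitrary basis by the evaluation basis coming from points of $X$, and make the subsheaves $\cL_{i,J_i}$ explicit before drawing the diagram.
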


                \begin{proof} On peut supposer que tous les $\cL_i$ sont non-nuls. Ils sont alors aussi des fibrés en droites sur $X$.
                
                 Le morphisme $\phi$ se factorise comme \[\prod_i\HX{\cL_i}\to\Sym(\HX{\cL}),(v_i)\mapsto\prod(1+v_i),\] suivi de \[\Sym(\HX{\cL})\to\oplus_{n\geq 0}\HX{\cL^{\otimes n}}\] lequel est le produit des morphismes \[\Sym^n(\HX{\cL})\to\HX{\cL^{\otimes n}},\text{ pour tout } n\geq 0.\] Ainsi notre énoncé est du même type que celui de la proposition~\ref{Pp:traceI}, sauf que notre fonctionnelle linéaire $l$ est plus spécifique.
                 
                     Pour adapter la preuve de~\ref{Pp:traceI}, il faut, pour tout $i\in I$, bien choisir une base de $\HX{\cL_i}$. Pour tout $i$, prenons un ensemble fini $B_i$ des points deux-à-deux disctincts de $X$ tels que, \begin{gather*}\dim\HX{\cL_i}=|B_i|,\\ \HX{\cL_i(-\sum_{b\in B_i}b)}=0.\end{gather*} De plus, on peut bien sûr supposer $B_i=B_{\gamma(i)}$ pour tout $i$.
                                          
                     Pour tout $i$ et tout $J_i\subseteq B_i$, on note $\cL_{i, J_i}\colonequals\cL_i(-\sum_{b\in B_i-J_i}b)$ le fibré en droites sur $X$ muni de l'injection faisceautique $\cL_{i,J_i}\hto\cL_i$. Alors, pour tout $b\in B_i$, $\HX{\cL_{i,\{b\}}}$ est de dimension~1. On choisit un élément non-nul $e_b$ là-dedans. Alors, pour tout $J_i\subseteq B_i$,  $\HX{\cL_{i,J_i}}$ est le sous-espace vectoriel de $\HX{\cL_i}$ librement engendré par $(e_b)_{b\in J_i}$. En particulier, $(e_b)_{b\in B_i}$ forment une base de $\HX{\cL_i}$. Il s'ensuit une stratification de $\HX{\cL_i}$ par les tores $T_{J_i}\colonequals\Gm^{J_i}$ où $J_i$ parcourt les sous-ensembles de $B_i$. Le produit $\prod_i\HX{\cL_i}$ est alors stratifé par les tores $\prod_i T_{J_i}$.
                     
               Comme dans la preuve de la proposition~\ref{Pp:traceI}, on est réduit à regarder, pour toute partition $\alpha: I=I_1\sqcup I_2$, des diagrammes commutatifs \[\begin{array}{ccc} 
          \prod\limits_{i\in I}T_{J_i} &\overset{\phi_{\alpha}}{\to} & \mathop{\oplus}\limits_{n\geq 0}\HX{\cL^{\otimes n}}-\{0\}\\
                      \pi_{\alpha}\da &   & \da    \\
\prod\limits_{i\in I_1}\PP(k\oplus \HX{\cL_{i,J_i}})\times\prod\limits_{i\in I_2}\PP(\HX{\cL_{i,J_i}})&\overset{m}{\to}&\PP(\mathop{\oplus}\limits_{n\geq 0}\HX{\cL^{\otimes n}}),
           \end{array}\]  où $\phi_{\alpha}$ envoie $(t_i)$ sur $\prod_{i\in I_1}(1+t_i)\times\prod_{i\in I_2}t_i$, $\pi_{\alpha}$ désigne le produit des morphismes lisses \begin{gather*} T_{J_i}\to\PP(k\oplus \HX{\cL_{i,J_i}}), t\mapsto k\cdot (1+t),\text{ pour tout } i\in I_1,\\T_{J_i}\to\PP(\HX{\cL_{i,J_i}}), t\mapsto k\cdot t,\text{ pour tout }i\in I_2,\end{gather*} et $m$ est donné simplement par la multiplication. 
           
           On sait que $m$ satisfait~\eqref{Eq:dimension}, cette fois par théorème~\ref{Thm:ThomII}. Ainsi, le lemme~\ref{Lm:duallisse} montre que pour tout $l\in\oplus_{n\geq 0}\HHH^1(X,\cL^{\otimes (-n)}\otimes\Omega)$ générique, $l\circ\phi_{\alpha}$ définit un diviseur lisse sur $\prod_{i\in I}T_{J_i}$, ce qui nous permet de conclure comme dans la preuve de~\ref{Pp:traceI}.                     
                \end{proof}

\end{document}